\newtheorem{theorem}{Theorem}[section]
\newtheorem{lemma}[theorem]{Lemma}
\newtheorem{corollary}[theorem]{Corollary}
\newtheorem{fact}[theorem]{Fact}
\newtheorem{proposition}[theorem]{Proposition}
\newtheorem{claim}[theorem]{Claim}
\newtheorem{assumption}[theorem]{Assumptions}
\theoremstyle{definition}
\newtheorem{example}[theorem]{Example}
\newtheorem{remark}[theorem]{Remark}
\newtheorem{definition}[theorem]{Definition}
\def\th{\operatorname{Th}}
\def\aut{\operatorname{Aut}}
\def\acl{\operatorname{acl}}
\def\eq{\operatorname{eq}}
\def\dcl{\operatorname{dcl}}
\def\tp{\operatorname{tp}}
\def\cb{\operatorname{Cb}}
\def\spec{\operatorname{Spec}}
\def\pr{\operatorname{pr}}
\def\id{\operatorname{id}}
\def\jet{\operatorname{Jet}}
\def\alg{\operatorname{alg}}
\def\ecdf{\mathcal D\operatorname{-CF}_0}
\def\trdeg{\operatorname{trdeg}}
\def\dlocus{\underline\cD\operatorname{-locus}}
\def\locus{\operatorname{locus}}
\newcommand{\Ga}{{\mathbb G}_{\operatorname a}}
\newcommand{\cM}{{\mathcal M}}
\newcommand{\cE}{{\mathcal E}}
\newcommand{\cD}{{\mathcal D}}
\newcommand{\cF}{{\mathcal F}}
\newcommand{\cG}{{\mathcal G}}
\newcommand{\cK}{{\mathcal K}}
\newcommand{\cL}{{\mathcal L}}
\newcommand{\UU}{{\mathbb U}}
\newcommand{\bbs}{{\mathbb S}}
\def\Ind#1#2{#1\setbox0=\hbox{$#1x$}\kern\wd0\hbox to 0pt{\hss$#1\mid$\hss}
\lower.9\ht0\hbox to 0pt{\hss$#1\smile$\hss}\kern\wd0}
\def\ind{\mathop{\mathpalette\Ind{}}}
\def\Notind#1#2{#1\setbox0=\hbox{$#1x$}\kern\wd0\hbox to 0pt{\mathchardef
\nn=12854\hss$#1\nn$\kern1.4\wd0\hss}\hbox to
0pt{\hss$#1\mid$\hss}\lower.9\ht0 \hbox to
0pt{\hss$#1\smile$\hss}\kern\wd0}
\def\nind{\mathop{\mathpalette\Notind{}}}
\begin{document}

\title[Fields with operators]{Model theory of fields with\\ free operators in characteristic zero}

\author{Rahim Moosa}
\address{Rahim Moosa\\
University of Waterloo\\
Department of Pure Mathematics\\
200 University Avenue West\\
Waterloo, Ontario \  N2L 3G1\\
Canada}
\email{rmoosa@uwaterloo.ca}

\thanks{R. Moosa was supported by an NSERC Discovery Grant.  T. Scanlon was partially supported
 by NSF grants FRG DMS-0854839 and DMS-1001556 }

\author{Thomas Scanlon}
\address{Thomas Scanlon\\
University of California, Berkeley\\
Department of Mathematics\\
Evans Hall\\
Berkeley, CA \ 94720-3480\\
USA}
\email{scanlon@math.berkeley.edu}

\date{August 27, 2013}

\subjclass[2000]{}

\begin{abstract}
Generalising and unifying the known theorems for difference and differential fields, it is shown that for every finite free algebra scheme $\cD$ over a field $A$ of characteristic
zero, the theory of $\cD$-fields has a model companion $\ecdf$ which is simple and
satisfies the Zilber dichotomy for finite-dimensional minimal types.
\end{abstract}

\maketitle

\tableofcontents
\vfill
\newpage

\section{Introduction}

The theories of differential and difference fields instantiate some
of the most sophisticated ideas and theorems in model theoretic stability
theory.  For example, the theory of differentially closed fields of characteristic zero,
$\operatorname{DCF}_0$, is an $\omega$-stable theory for which the full panoply of
geometric stability theory applies from the existence and uniqueness of prime models
(and, hence, of differential closures) to the theory of liaison groups (and, thus,
a very general differential Galois theory) to the Zilber trichotomy for strongly minimal sets
(from which strong theorems about function field arithmetic have been deduced).
Likewise, the model companion of the theory of difference fields, $\operatorname{ACFA}$,
is supersimple and admits an analogous theory of internal automorphism groups and
satisfies a version of the Zilber dichotomy for its minimal types.   Beyond the formal
analogies and parallel theorems, the proofs of the basic results in the model theory of
differential and difference fields follow similar though not identical lines.  In this
paper we formalise the sense in which these theories are specialisations of a common theory
of fields with operators and how the theories may be developed in one fell swoop. On the other
hand, features which emerge from the general theory explain how the theories of differential
and difference fields diverge.

By definition a derivation on a commutative ring $R$ is
an additive map $\partial:R \to R$ which satisfies the Leibniz rule $\partial(xy) = x \partial(y) +
y \partial(x)$.  Equivalently, the function $e:R \to R[\epsilon]/(\epsilon^2)$ given by $x \mapsto x + \partial(x) \epsilon$
is a homomorphism of rings.  An endomorphism $\sigma:R \to R$ of a ring is simply a ring homomorphism from the ring
$R$ back to itself, but at the risk of complicating the definition, we may also say that a function $\sigma:R \to R$ is
an endomorphism if the function $e:R \to R \times R$ given by $x \mapsto (x,\sigma(x))$ is a homomorphism of rings.
With each of the latter presentations we see differential (respectively, difference) ring as
a $\cD$-ring in the sense introduced in~\cite{paperA}.

As the details of the $\cD$-ring formalism along with  many examples are presented in Section~\ref{Dringsec},
we limit ourselves to a loose discussion here.  For each fixed ring scheme
$\cD$ (possibly over some base ring $A$) satisfying some additional requirements we have
a theory of $\cD$-fields.  In particular, we require that the underlying
additive group scheme of $\cD$ be some power of the additive group scheme so that for any $A$-algebra $R$,
$\cD(R) = (R^n,+,\boxtimes)$ where the multiplication $\boxtimes$ is given by some bilinear form defined over $A$.
We require that $\cD$ comes equipped with a functorial projection map to the standard ring scheme and that for the
sake of concreteness, read relative to coordinates this projection map be given by projection onto the
first coordinate.  A $\cD$-ring is then a pair $(R,e)$ consisting of an $A$-algebra $R$ and a map of $A$-algebras
$e:R \to \cD(R)$ which is a section of the projection.    In the motivating examples, $\cD(R) = R[\epsilon]/(\epsilon^2) = (R^2,+,\boxtimes)$
where $(x_1,x_2) \boxtimes (y_1,y_2) = (x_1 y_1, x_1 y_2 + x_2 y_1)$  ($\cD(R) = R \times R$ with coordinatewise ring operations,
respectively).

In general, using the coordinatization of $\cD(R)$, the data of a $\cD$-ring $(R,e)$ is equivalent to that of
a ring $R$ given together with a sequence $\partial_0, \ldots, \partial_{n-1}$ of operators $\partial_i:R \to R$
for which the map $e:R \to \cD(R)$ is given in coordinates by $x \mapsto (\partial_0(x), \ldots, \partial_{n-1}(x))$.
The requirements on such a sequence of operators that they define a $\cD$-ring structure may be expressed by certain
universal axioms.  For example, to say that the map $e:R \to \cD(R)$ is a section of the projection is just to say that
$(\forall x \in R) \partial_0(x) = x$ and the requirement that $e$ convert multiplication in $R$ to the multiplication of $\cD(R)$ may be expressed by insisting that certain polynomial relations hold amongst $\partial_0(x), \ldots, \partial_{n-1}(x);
\partial_0(y), \ldots, \partial_{n-1}(y); \partial_{0}(xy), \ldots, \partial_{n-1}(xy)$.  In this way, the class of $\cD$-fields is
easily seen to be first order in the language of rings augmented by unary function symbols for the operators
$\partial_0, \ldots, \partial_{n-1}$.  On the other hand, the interpretation of the operators as components of
a ring homomorphism permits us to apply ideas from commutative algebra and algebraic
geometry to analyze these theories.

Our first main theorem is that for any ring scheme $\cD$ (meeting the requirements set out in Section~\ref{Dringsec}), the
theory of $\cD$-fields of characteristic zero has a model companion, which we denote by $\ecdf$ and call
the theory of $\cD$-closed fields.  Our axiomatization of $\ecdf$ follows the geometric style which first appeared
in the Chatzidakis-Hrushovski axioms for ACFA~\cite{acfa1} and was then extended to differential fields by Pierce and
Pillay~\cite{piercepillay}.  Moreover, the proofs will be familiar to anyone who has worked through the corresponding
results for difference and differential fields.  Following the known proofs for difference and differential
fields, we establish a quantifier simplification theorem and show that $\ecdf$ is always simple.

As  noted above, the theory $\operatorname{DCF}_0$ is the quintessential $\omega$-stable theory, but
$\operatorname{ACFA}$, the model companion of the theory of difference fields is not even stable.   At a technical
level, the instability of $\operatorname{ACFA}$ may be traced to the failure of quantifier elimination which,
algebraically, is due to the non-uniqueness (up to isomorphism) of the extension of an automorphism of a field to
an automorphism of its algebraic closure.  We show that this phenomenon, namely that instability is tied to the
nonuniqueness of extensions of automorphisms, pervades the theory of $\cD$-fields.   That is, for each $\cD$
there is a finite list of associated endomorphisms expressible as linear combinations of the basic operators.
Since we require that $\partial_0$ is the identity map, one of these associated endomorphisms is always the identity
map.  If there are any others, then the theory of $\ecdf$ suffers from instability and the failure of
quantifier elimination just as does ACFA.  On the other hand, if there are no other associated endomorphisms, then
$\ecdf$ is stable.

The deepest of the fine structural theorems for types  in DCF$_0$ and in ACFA is the Zilber dichotomy for
minimal types, first established by Sokolovi\'{c} and Hrushovski for DCF$_0$ using Zariski geometries~\cite{hrushovskisokolovic}, for
ACFA$_0$ by Chatzidakis and Hrushovski through a study of ramification~\cite{acfa1}, and by Chatzidakis, Hrushovski and Peterzil
for ACFA in all characteristics using the theory of limit types and a refined form of the theory of Zariski geometries~\cite{acfa2}.
Subsequently, Pillay and Ziegler established a stronger form of the trichotomy theorem
in characteristic zero~\cite{pillayziegler03} by adapting
jet space arguments Campana and Fujiki used to study
complex manifolds~\cite{campana, fujiki}.  Here we implement the Pillay-Ziegler strategy
for $\ecdf$ by using the theory of $\cD$-jet spaces from~\cite{paperB}.  In particular, we show that finite dimensional types in $\ecdf$ satisfy the
canonical base property.

The model companion of the theory of difference fields of characteristic zero with $n$ automorphisms appears as
$\ecdf$ where $\cD(R) := R^{1+n}$ in contradistinction to the well-known fact that the theory of difference fields with
$n$ ($>1$) commuting automorphisms does not have a model companion.  On the other hand, if
$({\mathbb U}, \partial_0,\partial_1,\ldots,\partial_n) \models \cD\text{-CF}_0$ is sufficiently saturated, then the type definable field
obtained as the intersection of the fixed fields of all the elements of the commutator group of the group generated by
$\partial_1, \ldots, \partial_n$ has Lascar rank $\omega^n$ and may be regarded as a universal domain for difference
fields with $n$ commuting automorphisms. (See Section~1.2 of~\cite{HrMM} for a discussion of these issues.)
Likewise, models of the theory $\operatorname{DCF}_{0,n}$ of differentially
closed fields with $n$ commuting derivations may be realized as type definable fields in models of $\ecdf$ where
$\cD(R) = R[\epsilon_1,\ldots,\epsilon_n]/(\epsilon_1,\ldots,\epsilon_n)^2$.

While omitting commutation allows for model companions in characteristic zero, it complicates matters in positive
characteristic.  Under a natural algebraic hypothesis on $p$ and $\cD$, namely that there be some $\epsilon \in \cD(A)$
which is nilpotent but for which $\epsilon^p \neq 0$, we observe with Proposition~\ref{nomcp} that no model companion of the
theory of $\cD$-fields of characteristic $p$ exists.
This proposition is consonant with the known examples of $\operatorname{ACFA}_p$ and $\operatorname{DCF}_p$  where no such $\epsilon$ exists.
However, it implies that the theory of (not necessarily iterative) Hasse-Schmidt differential fields of positive characteristic does not have a model companion, which is at odds with the iterative theory $\operatorname{SCH}_{p,e}$ considered by Ziegler~\cite{ziegler03}.
While the theory of iterative $\cD$-fields developed in~\cite{paperB} was intended as an abstraction of the
theory of iterative Hasse-Schmidt differential fields, we have not yet understood the extent to which the theorems around SCH$_{p,e}$
generalise to iterative $\cD$-fields.  As such, we leave open the problems of which theories of iterative $\cD$-fields and which theories of
positive characteristic $\cD$-fields have model companions.

This paper is organized as follows.   We begin in Section~\ref{notion} with some remarks about our conventions.  With Section~\ref{Dringsec}
we recall the formalism of $\cD$-rings in detail and present several examples.  In Section~\ref{chapter-mc} we give axioms for
the theory $\ecdf$ and prove that it is in fact the model companion of the theory of $\cD$-fields of characteristic zero.
In Section~\ref{basicmodel} we establish the essential model theoretic properties of $\cD$-closed fields.  In Section~\ref{zilbersec}
we give a proof of the Zilber dichotomy for minimal types of finite dimension.
We conclude with an appendix in which we show that the theory of $\cD$-fields does not have a model companion
for most choices of $\cD$ in positive characteristic, and also explain how a convenient set of assumptions made early in the paper can be removed.

We are very grateful to the anonymous referee for a careful reading of an earlier version of this paper, and for suggesting changes that have lead to significant improvements.
We are also grateful to Omar Le\'on S\'anchez and Tamvana Makuluni for catching errors in an earlier version.

\section{Notation and conventions}
\label{notion}

All rings are commutative and unitary.
As a general rule, we follow standard conventions in model theory and differential algebra and introduce
unfamiliar notation as needed.  We do move between
scheme theory and Weil-style algebraic geometry.   For the most part, the theory of prolongation and jet spaces
must be developed scheme theoretically as we make essential use of nonreduced bases.  However, in the applications
to the first order theories of fields, as is common in model theoretic algebra, we sometimes use Weil-style language.
Let us note some of these conventions.  If $X$ is some variety over a field $K$, $L$ is an extension field of $K$, and
$a \in X(L)$ is an $L$-rational point, then $\operatorname{loc}(a/K)$, the locus of $a$ over $K$, is the intersection of
all closed $K$-subvarieties $Y \subseteq X$ with $a \in Y(L)$.   If $X$ is affine with coordinate ring ${\mathcal O}_X$, then
we define $I(a/K) := \{ f \in {\mathcal O}_X ~:~ f(a) = 0 \}$ to be the ideal of $a$ over $K$.   Generalizing somewhat,
for $Y \subseteq X_L$ a subvariety of the base change of $X$ to $L$, we define $I(Y/K) := \{ f \in {\mathcal O}_X ~:~ f
\text{ vanishes on } Y \}$.   Note that the locus of $a$ over
$K$ is the variety defined by the ideal of $a$ over $K$.  We say that $a$ is a generic point of $X$ over $K$ (or is
$K$-generic in $X$) if $\operatorname{loc}(a/K) = X$.    Scheme theoretically, one would say that $I(a/K)$ is the
generic point of $X$, but these two points of view will not appear in the same section.

\bigskip
\section{$\cD$-rings}
\label{Dringsec}

\noindent
Throughout this paper we will fix the following data, sometimes making further assumptions about them:
\begin{itemize}
\label{data}
\item[A.]
a base ring $A$
\item[B.]
a finite free $A$-algebra $\cD(A)$; that is, $\cD(A)$ is an $A$-algebra which as an $A$-module is free of finite rank,
\item[C.]
an $A$-algebra homomorphism $\pi^A:\cD(A)\to A$, and
\item[D.]
an $A$-basis $(\epsilon_0,\dots,\epsilon_{\ell-1})$ for $\cD(A)$ such that $\pi^A(\epsilon_0)=1$ and $\pi^A(\epsilon_i)=0$ for all $i=1,\dots,\ell-1$.
\end{itemize}
An equivalent scheme-theoretic way to describe these data is as a {\em finite free $\mathbb S$-algebra scheme over $A$ with basis} in the sense of~\cite{paperA} and~\cite{paperB}.
Here $\mathbb S$ denotes the ring scheme which when evaluated at any $A$-algebra $R$ is just the ring $R$ itself.
That is, $\mathbb S$ is simply the affine line $\spec\big(A[x]\big)$ endowed with the usual ring scheme structure.
Instead of~B, C, D as above we could consider the basic data as being
\begin{itemize}
\item[B$'$.]
an $\mathbb S$-algebra scheme $\cD$ over $A$; that is, a ring scheme $\cD$ over $A$ together with a ring scheme morphism $s:\mathbb S\to\cD$ over $A$,
\item[C$'$.]
a morphism of $\mathbb S$-algebra schemes $\pi:\cD\to\mathbb S$ over $A$, and
\item[D$'$.]
an $\mathbb S$-linear isomorphism $\psi:\cD\to\mathbb S^\ell$ over $A$ such that $\pi$ is $\psi$ composed with the first co-ordinate projection on $\mathbb S^\ell$.
\end{itemize}
Indeed, as is explained on page 14 of~\cite{paperB}, we obtain the second presentation from the first as follows: given any $A$-algebra $R$ define $\cD(R)=R\otimes_A\cD(A)$, $\pi^R=\id_R\otimes\pi^A$, and $\psi^R=\id_R\otimes\psi^A$ where $\psi^A:\cD(A)\to A^\ell$ is the $A$-linear isomorphism induced by the choice of basis $(\epsilon_0,\dots,\epsilon_{\ell-1})$.
To go in the other direction is clear, one just evaluates all the scheme-theoretic data on the ring $A$.
A key point is that given $(\cD,\pi,\psi)$, for any $A$-algebra $R$ there is a canonical isomorphism induced by $\psi$ between $\cD(R)$ and $R\otimes_A\cD(A)$.
While the first presentation of the data is more immediately accessible, it is the second scheme-theoretic one that is more efficient and that we will use.

\begin{remark}
\label{assumptionpi}
The assumption in~D$'$ that $\pi$ is $\psi$ composed with the first co-ordinate projection on $\mathbb S^\ell$ is new in that it was not made in Definition~2.2 of~\cite{paperB}.
However, it can always be made to hold through a change of basis.
\end{remark}

The multiplicative structure on $\cD(A)$, and hence on $\cD(R)$ for any $A$-algebra $R$, can be described in terms of the basis by writing
\begin{eqnarray}
\epsilon_i\epsilon_j&=&\label{productbasis}
\sum_{k=0}^{\ell-1}a_{i,j,k}\epsilon_k
\\
1_{\cD(A)}&=& \sum_{k=0}^{\ell-1}c_k\epsilon_k \label{1basis}
\end{eqnarray}
where the $a_{i,j,k}$'s and $c_k$'s are elements of $A$.  Note that $a_{0,0,0} = 1$ and $c_0 = 1$.

\begin{definition}[$\cD$-rings]
By a {\em $\cD$-ring} we will mean an $A$-algebra $R$ together with a sequence of operators $\partial:=(\partial_1,\dots,\partial_{\ell-1})$ on $R$ such that the map $e:R\to\cD(R)$ given by
$$e(r):=r\epsilon_0+\partial_1(r)\epsilon_1+\cdots+\partial_{\ell-1}(r)\epsilon_{\ell-1}$$
is an $A$-algebra homomorphism.
\end{definition}

Via the above identity we can move back and forth between thinking of a $\cD$-ring as $(R,\partial)$ or as $(R,e)$.
It should be remarked that this is not exactly consistent with~\cite{paperA}.
In that paper, a ``$\cD$-ring'' was defined to be simply an $A$-algebra $R$ together with an $A$-algebra homomorphism $e:R\to\cD(R)$.
Hence, under the correspondence $(R,\partial)\mapsto (R,e)$, the $\cD$-rings of the current paper are precisely the ``$\cD$-rings'' of~\cite{paperA} with the additional assumption that $e$ is a section to $\pi^R:\cD(R)\to R$.
(Note that this latter assumption already appears in Definition~2.4 of~\cite{paperB}.)

The class of $\cD$-rings is axiomatisable in the language
$$\cL_\cD := \{0,1,+,-,\times,(\lambda_a)_{a\in A},\partial_1,\dots,\partial_{\ell-1}\}$$\label{language}
where $\lambda_a$ is scalar multiplication by $a\in A$.
Indeed, the class of $A$-algebras is cleary axiomatisable, and the $A$-linearity of $e:R\to\cD(R)$, which is equivalent to $\partial_1,\dots,\partial_{\ell-1}$ being $A$-linear operators on $R$, is also axiomatisable.
Finally, that $e$ is in addition a ring homomorphism corresponds to the satisfaction of certain $A$-linear functional equations on the operators.
Indeed, using~(\ref{productbasis}) above, we see that the multiplicativity of $e$ is equivalent to
\begin{eqnarray}
\partial_k(xy) &=&
\sum_{i=0}^{\ell-1}\sum_{j=0}^{\ell-1}a_{i,j,k}\partial_i(x)\partial_j(y) \ \ \ \text{ for all $x,y$, and} \label{multrule}\\
\partial_k(1_R) &=& c_k \label{idrule}
\end{eqnarray}
for all $k=1,\dots,\ell-1$.

\begin{example}[Prime $\cD$-ring]
\label{primeexample}
For any $(A,\cD,\pi,\psi)$ there is a unique $\cD$-ring structure on $A$, namely where the $\partial_i:A\to A$ are $A$-linear and satisfy
$$1=\epsilon_0+\sum_{i=1}^{\ell-1}\partial_i(1)\epsilon_i$$
This corresponds to $e=s^A$, and is called the {\em prime $\cD$-ring}.
\end{example}

\begin{example}[Fibred products]
\label{combine}
We can always combine examples.
Given $(\cD,\pi,\psi)$ and $(\cD',\pi',\psi')$ we can consider the fibred product $\cD\times_{\mathbb S}\cD'$ with $\pi\times\pi':\cD\times_{\mathbb S}\cD'\to\mathbb S\times_{\mathbb S}\mathbb S=\mathbb S$, and $\psi\times\psi':\cD\times_{\mathbb S}\cD'\to\mathbb S^{\ell}\times_{\mathbb S}\mathbb S^{\ell'}=\mathbb S^{\ell+\ell'-1}$.
The $\cD\times_{\mathbb S}\cD'$-rings will be precisely those of the form $(R,\partial,\partial')$ where $(R,\partial)$ is a $\cD$-ring and $(R,\partial')$ is a $\cD'$-ring.
Note that the theory of $\cD\times_{\mathbb S}\cD'$-rings does not impose any nontrivial functional equation relating $\partial$ and $\partial'$, for example, they are not asked to commute.
\end{example}

\begin{example}[Tensor products]
\label{tensorexample}
Here is another way to combine examples.
Given $(\cD,\pi,\psi)$ and $(\cD',\pi',\psi')$ we can consider the tensor product $\cD\otimes_{\mathbb S}\cD'$ with $\pi\otimes\pi':\cD\otimes_{\mathbb S}\cD'\to\mathbb S$,
and $\psi\otimes\psi':\cD\otimes_{\mathbb S}\cD'\mathbb \to {\mathbb S}^{\ell}\otimes_{\mathbb S}\mathbb S^{\ell'}=\mathbb S^{\ell\ell'}$.
If $R$ has both a $\cD$-ring structure $(R,\partial)$ and a $\cD'$-ring
structure $(R,\partial')$, then it has the natural $(\cD \otimes_{\mathbb S} \cD')$-structure $(R,D)$ where $D_{i + j \ell} = \partial_j' \circ \partial_i$.
This comes from the fact that
\begin{eqnarray*}
(\cD \otimes_{\mathbb S} \cD')(R)&=&R\otimes_A\big(\cD(A) \otimes_A \cD'(A)\big)\\
&=&
\big(R\otimes_A\cD(A)\big) \otimes_A \cD'(A)\\
&=&\cD(R)\otimes_A \cD'(A)\\
&=&\cD'\big(\cD(R)\big)
\end{eqnarray*}
But not every $(\cD \otimes_{\mathbb S} \cD')$-structure on $R$ is of this form.
For example, one also has $(R,\widetilde{D})$ with $\widetilde{D}_{i + j \ell} = \partial_i\circ\partial_j'$ that arises from regarding $(\cD \otimes_{\mathbb S} \cD')$ as $\cD\circ\cD'$ instead.
The identities imposed by being
a $(\cD \otimes_{\mathbb S} \cD')$-ring are just the generalised Leibniz rules satisfied by compositions of the components of $\cD$-ring and
$\cD'$-ring structures.
\end{example}

\begin{example}
\label{examples}
We list here some of the main motivating examples.
Characteristic~$0$ or~$p$ specialisations of these examples are obtained by letting $A$ be $\mathbb Q$ or $\mathbb F_p$, respectively.
In what follows $R$ ranges over all $A$-algebras.
\begin{itemize}
\item[(a)]
{\em Differential rings.}
Let
$\cD(R)=R[\eta]/(\eta^2)$ with the natural $R$-algebra structure,
$\pi^R:R[\eta]/(\eta^2)\to R$ be the quotient map, and $(1,\eta)$ the $R$-basis.
Then a $\cD$-ring is precisely an $A$-algebra equipped with a derivation over $A$.
\item[(b)]
{\em Truncated higher derivations.}
Generalising the above example, let
$$\cD(R)=R[\eta]/(\eta^{n+1})$$
with the natural $R$-algebra structure,
$\pi^R:R[\eta]/(\eta^{n+1})\to R$ the quotient map, and take as an $R$-basis $(1,\eta,\dots,\eta^n)$.
Then a $\cD$-ring is precisely an $A$-algebra equipped with a {\em higher derivation of length $n$ over $A$} in the sense of~\cite{matsumura}; that is, a sequence of $A$-linear maps $(\partial_0=\id,\partial_1,\dots,\partial_m)$ such that $\partial_i(xy)=\sum_{r+s=i}\partial_r(x)\partial_s(y)$.

It is worth pointing out here that even in characteristic zero (so when $A=\mathbb Q$ for example) this is a proper generalisation of differential rings.
It is true that $\partial_1$ is a derivation, and {\em if} we had imposed the usual iterativity condition then we would have $\displaystyle \partial_i=\frac{\partial_1^i}{i!}$.
But the point is that being a $\cD$-ring does not impose iterativity, the operators are in this sense ``free".
\item[(c)]
{\em Difference rings.}
Let $\cD(R)=R^{2}$ with the product $R$-algebra structure,
$\pi^R$ the projection onto the first co-ordinate, and $(\epsilon_0,\epsilon_1)$ the standard basis.
Then a $\cD$-ring is precisely an $A$-algebra equipped with an $A$-endomorphism.
\item[(d)]
{\em Partial higher differential-difference rings.}
Taking fibred products as in Example~\ref{combine}, we can combine the above examples.
That is, suppose we are given positive integers $m_1, n_1,\dots,n_{m_1}$, and $m_2$.
For an appropriate choice of $(\cD,\pi,\psi)$ the $\cD$-rings will be precisely the $A$-algebras equipped with $m_1$ higher $A$-derivations (of length $n_1,\dots,n_{m_1}$ respectively) and $m_2$ $A$-endomorphisms.
Note that being a $\cD$-ring will not impose that the various operations commute.
\item[(e)]
{\em $D$-rings.}
Fix $c\in A$ and let $\cD(R) := R^2$ as an $R$-module and define multiplication by
$$(x_1,y_1) \cdot (x_2,y_2) := (x_1 x_2, x_1 y_2 + y_1 x_2 + y_1 y_2 c).$$
A
$\cD$-ring is then an $A$-algebra $R$ equipped with an $A$-linear map $D:R \to R$ satisfying the
twisted Leibniz rule $D(xy) = x D(y) + D(x) y + D(x) D(y) c$.
If we define $\sigma:R \to R$ by
$\sigma(x) := x + D(x)c$, then $\sigma$ is a ring endomorphism of $R$.
If $c = 0$ then $D$ is
a derivation on $R$, if $c$ is invertible in $A$ then $D$ may be computed from $\sigma$ by the
rule $D(x) = e^{-1}(\sigma(x) - x)$.
Such structures were considered by the second author in~\cite{scanlon2000}.
\end{itemize}
\end{example}

\begin{example}
In order to exhibit the variety of operators that can be put into this formalism, let us describe two more examples.
\begin{itemize}
\item[(a)] \emph{Derivations of an endomorphism.}
Consider $\cD(R)=R\times R[\eta]/(\eta^2)$ with basis $\{(1,0),(0,1),(1,\eta)\}$ and $\pi$ the projection onto the first co-ordinate.
A $\cD$-ring is then an $A$-algebra equipped with an $A$-endomorphism $\sigma$ and an $A$-linear map $\delta$ satisfying the $\sigma$-twisted Leibniz rule
$$\delta(xy)=\sigma(x)\delta(y)+\delta(x)\sigma(y)$$
\item[(b)]
Suppose $\mathcal{D}(R)=R[\eta_1,\eta_2]/(\eta_1^2,\eta_2^2)$, with basis  $\{1, \eta_1,\eta_2,\eta_1\eta_2\}$ and $\pi$ the natural quotient map.
A $\mathcal{D}$-ring is an $A$-algebra $R$ equipped with three operators, $\partial_1,\partial_2,\partial_3$, such that  $\partial_1$ and $\partial_2$ are derivations and $\partial_3$ is an $A$-linear map satisfying
$$\partial_3(xy)=x\partial_3(y)+y\partial_3(x)+\partial_1(x)\partial_2(y)+\partial_2(x)\partial_1(y)$$
For example, if $\partial_1,\partial_2$ are {\em arbitrary} derivations on $R$ and $\partial_3:=\partial_1\circ\partial_2$, then $(R,\partial_1,\partial_2,\partial_3)$ is a $\mathcal{D}$-ring.
This example is a special case of the tensor product construction of Example~\ref{tensorexample}.
\end{itemize}
\end{example}

This formalism of $\cD$-rings is rather general.
We leave to future work the systematical classification of the operators on  $A$-algebras which it covers.

\begin{remark}
\label{freeness}
It may at this point be worth explaining in what sense we consider the theory we develop here as a theory of ``free" operators.
To say that $(R,\partial_1,\dots,\partial_{\ell-1})$ is a $\cD$-ring certainly imposes some relations among the $\partial_i$, namely the multiplicativity of the corresponding $e:R\to\cD(R)$ forces the constraint~(\ref{multrule}) on page~\pageref{multrule}.
For example, in the case of a higher derivation this says that
$$\partial_i(xy)=\sum_{r+s=i}\partial_r(x)\partial_s(y)$$
But the point is that such constraints entailed by the multiplicativity of $e$ are the only non-trivial relation among the $\partial_i$ that are forced.
Of course, in a particular $\cD$-ring further functional equations may happen to hold -- the $\partial_i$ may commute, for example -- but this is not imposed by the theory of $\cD$-rings.
\end{remark}

In the next chapter we will show that  the theory of $\cD$-fields in characteristic zero has a model companion.
In order to describe the axioms of this model companion we will make use of the {\em abstract prolongations} introduced and discussed in~$\S 4$ of~\cite{paperA}.
Let us briefly recall them here.

Given a $\cD$-ring $(R,\partial)$ and an algebraic scheme $X$ over $R$, the prolongation\footnote{The prolongation does not always exist; however it does exist for any quasi-projective scheme. See the discussion after Definition~4.1 of~\cite{paperA} for details.}
of $X$, denoted by $\tau(X,\cD, e)$ or just $\tau X$ for short, is itself a scheme over $R$ with the characteristic property that its $R$-points can be canonically identified with\label{identifyprolong} $X(\cD(R))$ where
 $X$ is regarded as a scheme over $\cD(R)$ via the base change coming from $e:R \to \cD(R)$.
 Via this identification, note that $e$  induces a map $\nabla:X(R)\to\tau(X,\cD,e)(R)$.

 In terms of equations, if $X\subset\mathbb A_R^n$ is the affine scheme $\spec\big(R[x]/I\big)$ where $x = (x_1,\ldots,x_n)$ is really
 an $n$-tuple of indeterminates then $\tau X$ will be the closed subscheme of $\mathbb A_R^{n\ell}$ given by $\spec\big(R[x^{(0)},x^{(1)},\ldots,x^{(\ell-1)}]/I'\big)$ where $I'$ is obtained as follows:
For each $P(x)\in I$ let $P^e(x)\in\cD(R)[x]$ be the polynomial obtained by applying $e$ to the coefficients of $P$, and compute
\begin{eqnarray*}
P^e( \sum_{j=0}^{\ell - 1} x^{(j)} \epsilon_j)&=&
\sum_{j=0}^{\ell -1} P^{(j)}(x^{(0)},x^{(1)},\dots,x^{(\ell-1)})\epsilon_j
\end{eqnarray*}
in the polynomial ring $\displaystyle \cD(R)[x^{(0)},x^{(1)},\dots,x^{(\ell-1)}]=\bigoplus_{i=0}^{\ell-1}R[x^{(0)},x^{(1)},\dots,x^{(\ell-1)}]\cdot \epsilon_i$.
Then $I'$ is the ideal of $R[x^{(0)},x^{(1)},\dots,x^{(\ell-1)}]$ generated by $P^{(0)},\dots,P^{(\ell-1)}$ as $P$ ranges in $I$.
Note that the $P^{(i)}$'s are computed using~(\ref{productbasis}) above.
With respect to these co-ordinates, the map $\nabla:X(R)\to\tau(X,\cD,e)(R)$ is given by
$\nabla(a)=\big(a,\partial_1(a),\dots,\partial_{\ell-1}(a)\big)$.

\bigskip
\section{Existentially closed $\cD$-fields}
\label{chapter-mc}

\noindent
We aim to show that the theory of $\cD$-fields of characteristic zero admits a model companion when the base ring $A$ enjoys some additional properties that are spelled out with the following assumptions.
In fact, as is explained in the appendix, these assumptions are not necessary.
Nevertheless, for the sake of significant ease of notation, and in order to better fix ideas, we will impose the following:

\begin{assumption}
\label{assumptionA}
The following assumptions will be in place throughout the rest of the paper, unless explicitly stated otherwise:
\begin{itemize}
\item[(i)]
The ring $A$ is a field.
\item[(ii)]
Writing $\cD(A)=\prod_{i=0}^tB_i$, where the $B_i$ are  local finite $A$-algebras,
the residue field of each $B_i$, which is necessarily a finite extension of $A$, is in fact $A$ itself.
\end{itemize}
\end{assumption}
\noindent
Note that  the $B_i$'s are unique up to isomorphism and reordering of the indices.
They can be obtained
 by running through the finitely many maximal ideals of $\cD(A)$ and quotienting out by a sufficiently high power of them.

All of the motivating examples described in~\ref{examples}, specialised to the case when $A=\mathbb Q$ or $\mathbb F_p$, satisfy Assumptions~\ref{assumptionA}.
The following is an example where~\ref{assumptionA}(ii) is not satisfied.

\begin{example}
\label{no4ii}
Let $A=\mathbb Q$ and $\cD(\mathbb Q)=\mathbb Q \ \times \ \mathbb Q[x]/(x^2-2)$, with standard basis $\{(1,0),(0,1),(0,x)\}$.
Then, in the notation of~\ref{assumptionA}(ii) we have $t=1$, $B_0=\mathbb Q$, $B_1=Q[x]/(x^2-2)$, and the residue field of $B_1$ is $B_1$ itself.
So~\ref{assumptionA}(ii) is not satisfied.
The $\cD$-rings in this case are precisely the $\mathbb Q$-algebras $R$ equipped with linear operators $\partial_1,\partial_2$ such that
\begin{itemize}
\item
$\partial_1(ab)=\partial_1(a)\partial_1(b)+2\partial_2(a)\partial_2(b)$
\item
$\partial_2(ab)=\partial_1(a)\partial_2(b)+\partial_2(a)\partial_1(b)$
 \end{itemize}
 Note that if $(K,\partial_1,\partial_2)$ is a $\cD$-field with $\sqrt{2}\in K$, then $\partial_1,\partial_2$ are interdefinable with the pair of endomorphisms $\partial_1+\sqrt{2}\partial_2$ and $\partial_1-\sqrt{2}\partial_2$ of $K$.
 This gives a hint as to how we should handle the situation when~\ref{assumptionA}(ii) fails, see~$\S$\ref{subsect-assumptionA} below.
\end{example}

\medskip
\subsection{The associated endomorphisms}
\label{assocend-section}
Our axioms for the model companion must take into account certain definable endomorphisms that are induced by the $\cD$-operators given the above decomposition of $\cD(A)$ into local artinian $A$-algebras.

First some notation.
Fixing $A$-bases for $B_0,\dots, B_t$, we get
\begin{itemize}
\item
finite free local $\bbs$-algebra schemes with bases, $(\cD_i,\psi_i)$ for $i=0,\dots, t$, such that $\cD_i(A)=B_i$,
\item
$\bbs$-algebra homomorphisms $\theta_i:\cD\to\cD_i$ corresponding to $\cD=\prod_{i=0}^t\cD_i$,
 \item
 $\bbs$-algebra homomorphisms $\rho_i:\cD_i\to\bbs$ which when evaluated at $A$ are the residue maps $B_i\to A$, and
 \item
 $\pi_i:=\rho_i\circ\theta_i:\cD\to\bbs$.
 \end{itemize}
Note that one of the maximal ideals of $\cD(A)$, say the one corresponding to $B_0$, is the kernel of our $A$-algebra homomorphism $\pi^A:\cD(A)\to A$.
In particular, $\pi_0=\pi$.

Now suppose $(R,\partial)$ is a $\cD$-ring.
For each $i=0,\dots, t$, we have the $A$-algebra endomorphism $\sigma_i:=\pi_i^R\circ e:R\to R$.
Since $\pi_0=\pi$, $\sigma_0=\id$.
The others, $\sigma_1,\dots,\sigma_t$, will be possibly nontrivial endomorphisms of $R$.
As the $\pi_i$ are $\bbs$-linear morphisms over $A$, the $\sigma_i$ are $A$-linear combinations of the operators $\partial_1,\dots,\partial_{\ell-1}$.
In particular, these are $0$-definable in $(R,\partial)$.
We call them the {\em associated endomorphisms} and $(R,\sigma_1,\dots,\sigma_t)$ the {\em associated difference ring}.

\begin{example}
In the partial difference-differential case of Example~\ref{examples}(d), that is, of $(R,\partial,\sigma)$ where $\partial$ is a tuple of $m_1$ (higher truncated) derivations on $R$ and $\sigma$ is a tuple of $m_2$ endomorphisms of $R$,  the associated difference ring is, as expected, $(R,\sigma)$.
In the $D$-rings of Example~\ref{examples}(e),  the associated endomorphism is the map $\sigma(x) := x + D(x) c$.
\end{example}

\begin{definition}
An {\em inversive} $\cD$-ring is one for which the associated endomorphisms are surjective.
That is, a $\cD$-ring $(R,\partial)$ is inversive just in case the associated difference ring $(R,\sigma)$ is inversive.

As a consequence of Theorem~\ref{ecedomains} below, we will see that if $R$ is an integral domain of characteristic zero whose associated endomorphisms are injective, then $(R,\partial)$ embeds into an inversive $\cD$-field.

Suppose now that $(K,\partial)$ is an inversive $\cD$-field.
The associated endomorphisms are thus automorphisms of $K$.
Given $B\subseteq K$,
the {\em inversive closure of $B$ in $K$}, denoted by $\langle B\rangle$, is the smallest inversive $\cD$-subring of $K$ containing $B$.
That is, it is the intersection of all inversive $\cD$-subrings containing $B$.
If $R$ is an inversive $\cD$-subring of $K$ and $a=(a_1,\dots,a_n)$ is a tuple from $K$, then $R\langle a\rangle$ is used to abbreviate $\big\langle R\cup\{a_1,\dots,a_n\}\big\rangle$.
\end{definition}

\begin{remark}
\begin{itemize}
\item[(a)]
Because $\sigma$ and $\partial$ may not commute, it is not the case that $\langle B\rangle$ is simply the inversive difference ring generated by the $\cD$-subring generated by $B$.
Rather, $\langle B\rangle=\bigcup_{i<\omega}R_i$ where $R_{-1}=B$ and $R_i$ is the $\cD$-subring generated by $\{\sigma_j^{-1}(a):a\in R_{i-1}, j=1,\dots,t\}$.
\item[(b)]
We denote by $\Theta a$ the (infinite) tuple whose co-ordinates are of the form $\theta a_i$ as $\theta$ ranges over all finite words on the set $\{\partial_1,\dots,\partial_{\ell-1},\sigma_1^{-1},\dots,\sigma_t^{-1}\}$.
So $k\langle a\rangle=k[\Theta a]$.
\end{itemize}
\end{remark}

\medskip
\subsection{The model companion}
\label{mc}
Our model companion for $\cD$-fields will include a ``geometric''  axiom in the spirit of Chatzidakis-Hrushovski~\cite{acfa1} or Pierce-Pillay~\cite{piercepillay}.
To state these we need some further notation.
Suppose $X$ is an algebraic scheme over a $\cD$-ring $R$.
For each $i=0,\dots,t$, since $\sigma_i=\pi_i^R\circ e$, the morphism $\pi_i:\cD\to\bbs$ induces\footnote{See~$\S4.1$ of~\cite{paperA} for the construction.} a surjective morphism on the prolongations  $\widehat\pi_i:\tau(X,\cD,e)\to \tau(X,\mathbb S,\sigma_i)$.
For ease of notation we will set $X^{\sigma_i}:=\tau(X,\mathbb S,\sigma_i)$.
Note that $X^{\sigma_i}$ is nothing other than $X$ base changed via $\sigma_i:R\to R$,  and that in terms of equations it is obtained by applying $\sigma_i$ to the coefficients of the defining polynomials.

\begin{theorem}
\label{ecedomains}
With Assumptions~\ref{assumptionA} in place, let $\cK$ denote the class of $\cD$-rings $(R,\partial)$ such that $R$ is an integral domain of characteristic zero and the associated endomorphisms are injective.
Then $(K,\partial)\in\cK$ is existentially closed if and only if
\begin{itemize}
\item[I.]
$K$ is an algebraically closed field,
\item[II.]
$(K,\partial)$ is inversive, and
\item[III.]
if $X$ is an irreducible affine variety over $K$ and $Y\subseteq\tau(X,\cD,e)$ is an irreducible subvariety over $K$ such that $\widehat{\pi_i}(Y)$ is Zariski dense in $X^{\sigma_i}$ for all $i=0,\dots,t$, then there exists $a\in X(K)$ with $\nabla(a)\in Y(K)$.
\end{itemize}
\end{theorem}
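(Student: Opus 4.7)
The plan is to prove both directions of the equivalence, following the template set by Chatzidakis--Hrushovski for ACFA \cite{acfa1} and Pierce--Pillay for DCF \cite{piercepillay}.

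For the \emph{necessity} of I--III, assume $(K,\partial)\in\cK$ is existentially closed. Condition~I is forced by extending $\partial$ uniquely to the algebraic closure (possible because $K^{\alg}/K$ is \'etale in characteristic zero), which keeps us in $\cK$, and then transferring the existential statement ``$P(x)=0$'' back to $K$. Condition~II is similar: an inversive extension is built as a direct limit along the associated endomorphisms, and the existential statement ``$\exists y\,\sigma_i(y)=x$'' transfers back. For condition~III, let $b$ be a generic point of $Y$ over $K$ and set $a:=\widehat\pi_0(b)$. Density of $\widehat\pi_0(Y)$ in $X^{\sigma_0}=X$ makes $a$ generic in $X$ over $K$. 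Viewing $b$ as an element of $X(\cD(K(b)))$ under the identification $\tau X(R)=X(\cD(R))$ for $\cD$-rings $R$, we obtain a $\cD$-field structure on $K(b)$ in which $\nabla(a)=b$; the density of each $\widehat\pi_i(Y)$ in $X^{\sigma_i}$ forces $\sigma_i(a)=\widehat\pi_i(b)$ to be generic in $X^{\sigma_i}$ over $K$, so the associated endomorphisms remain injective and $K(b)\in\cK$. The existential sentence ``$\exists x,\,x\in X\wedge\nabla(x)\in Y$'' is witnessed there, and existential closedness pushes the witness down to $K$.

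For \emph{sufficiency}, assume $(K,\partial)$ satisfies I--III, let $(L,\partial^L)\in\cK$ extend it, and let $\phi(x)$ be a quantifier-free $\cL_\cD$-formula over $K$ realised in $L$ by a tuple $c$. Fix $N$ bounding the depth of operator words appearing in $\phi$, and set $\bar c:=(\theta c)_{|\theta|\le N}$, $X:=\locus(\bar c/K)$, $Y:=\locus(\nabla(\bar c)/K)\subseteq\tau X$; both are irreducible. Since $K$ is inversive by~II, each $\sigma_i$ is a $K$-automorphism, and $\widehat\pi_i(\nabla(\bar c))=\sigma_i^L(\bar c)$ is generic in $X^{\sigma_i}$ over $K$, establishing the density hypothesis of~III. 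Applying~III yields $\bar d=(d_\theta)_{|\theta|\le N}\in X(K)$ with $\nabla(\bar d)\in Y(K)$. The key structural point is that $\nabla(\bar c)$ literally equals $(\theta c)_{|\theta|\le N+1}$ with overlaps, so the polynomial identities $y^{(j)}_\theta=x_{\partial_j\theta}$ (for $|\theta|<N$, where $y^{(j)}_\theta$ is the $\partial_j$-prolongation of the $x_\theta$ coordinate) hold at the generic point and therefore cut out part of $Y$; evaluated at $\bar d$ they give $\partial_j(d_\theta)=d_{\partial_j\theta}$, so $\bar d=\nabla^N(d_\emptyset)$. Thus $d_\emptyset\in K$ satisfies every polynomial equation over $K$ satisfied by $c$ up to depth $N$, realising the equational content of $\phi$. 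The negated atomic conjuncts are handled by applying~III to a principal affine open $X_f$ (and $Y\cap\tau X_f$) for $f$ cutting out the required vanishing loci; the density hypothesis is preserved because the generic point of $X^{\sigma_i}$ still lies in the corresponding open.

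The \emph{main obstacle} is the structural claim in sufficiency that $\bar d$ must be the iterated prolongation of its depth-$0$ block. This requires careful identification of the prolongation coordinates of $\tau X$ with the deeper coordinates of $\nabla^N(c)$ and a verification that the consistency relations linking them are forced by the defining equations of $Y$. Inversiveness from~II is indispensable: without it $\widehat\pi_i(\nabla(\bar c))$ could fail to be generic in $X^{\sigma_i}$, breaking the density hypothesis at the very first step.
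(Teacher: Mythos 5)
Your overall architecture is the paper's, and your sufficiency direction is essentially correct and complete: bounding the operator depth, setting $X:=\locus(\bar c/K)$ and $Y:=\locus(\nabla\bar c/K)\subseteq\tau X$, using inversiveness of $K$ to see that $\widehat\pi_i(\nabla\bar c)=\sigma_i(\bar c)$ is $K$-generic in $X^{\sigma_i}$, and then recovering $\bar d=\nabla_N(d_\emptyset)$ by induction from the coordinate identities that hold at the generic point of $Y$ and hence on all of $Y$ --- this is exactly the argument in the paper (which works one level lower, with $\nabla_{r-1}(c_0)$, and removes inequations by the Rabinowitsch trick rather than by localising; these differences are immaterial).

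The genuine gap is in the necessity direction, precisely where partial $\cD$-structures must be completed to total ones --- the part the paper handles with the auxiliary class $\cM$ and Lemmas~\ref{extendtofields}, \ref{extendtoalg}, \ref{extendtoautos}, none of which you supply. First, the claim that viewing $b\in X\big(\cD(K(b))\big)$ ``gives a $\cD$-field structure on $K(b)$'' is false in general: what you actually get is an $A$-algebra homomorphism $e:K[a]\to\cD(L)$, i.e.\ operators defined only on $K[a]$ with values in a larger field. Already in the difference case $\cD(R)=R\times R$ with $Y$ a curve, $\sigma(b_1)$ need not lie in $K(b_0,b_1)$, so the structure genuinely lives on a proper extension; one needs a lemma that every such partial structure (with the associated maps injective) extends to a total inversive $\cD$-field structure on some extension, and that is the real content you are assuming. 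Second, your inversive extension ``built as a direct limit along the associated endomorphisms'' does not work as described: the remaining operators $\partial_j$ need not commute with the $\sigma_i$, so there is no canonical way to define them on the new elements $\sigma_i^{-1}(x)$; the paper instead extends the $\sigma_i$ to automorphisms of a large algebraically closed field and lifts the $\cD$-structure along a transcendence basis, using Hensel's lemma to control the associated endomorphisms on the algebraic part. Third, ``extending $\partial$ uniquely to the algebraic closure'' is wrong whenever some $\sigma_i\neq\id$: the extension is unique only after a choice of extensions of the associated endomorphisms (this non-uniqueness is exactly the ACFA phenomenon), and before any of this you must show an existentially closed member of $\cK$ is a field at all, which is where injectivity of the $\sigma_i$ enters (it forces $e$ to send nonzero elements to units of $\cD$ of the fraction field). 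Only existence, not uniqueness, is needed, so the last point is repairable, but the first two require the extension lemmas your proposal omits.
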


Note that $\cK$ is a universally axiomatisable class.
(This uses the fact that the associated endomorphisms of a $\cD$-ring are $A$-linear combinations of the operators, and hence definable.)
Moreover, the characterisation of existentially closed models given in the theorem is also first-order.
Indeed, the only one that is not obviously elementary is condition~III, but
since irreducibility and Zariski-density are parametrically definable in algebraically closed fields, the only thing to check is that if $X$ varies in an algebraic family then so do the $\widehat{\pi_i}:\tau X\to X^{\sigma_i}$.
That $\tau X$ and $X^{\sigma_i}$ vary uniformly in families follows from Proposition~4.7(b) of~\cite{paperA}, but can also be verified directly by looking at the equations that define the prolongations.
That $\widehat{\pi_i}$ also varies algebraically follows from the construction of these morphisms in $\S 4.1$ of~\cite{paperA}; see in particular Proposition~4.8(b) of that paper.
The following is therefore an immediate corollary of Theorem~\ref{ecedomains}:

\begin{corollary}
\label{corecedomains}
Under assumptions~\ref{assumptionA} the theory of $\cD$-fields of characteristic zero admits a model companion.
We denote the model companion by $\ecdf$, and we call its models {\em $\cD$-closed fields}.
\end{corollary}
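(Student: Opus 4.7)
The plan is to follow the geometric-axiomatisation strategy of Chatzidakis--Hrushovski for ACFA~\cite{acfa1} and Pierce--Pillay for $\operatorname{DCF}_0$~\cite{piercepillay}. There are two directions to establish. For sufficiency of I--III, one reduces an arbitrary quantifier-free formula over $K$ consistent with the universal theory of $\cK$ to the geometric form appearing in III, and then applies III directly. For necessity, one shows that failure of any of I, II, III yields a proper extension of $(K,\partial)$ in $\cK$ realising a new existential formula over $K$, contradicting existential closedness.

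For necessity: in characteristic zero $\cD$-structures extend uniquely along \'etale extensions, so if $K$ were not algebraically closed, a separable root of an irreducible polynomial over $K$ would supply a new element in a member of $\cK$ extending $(K,\partial)$, giving I. For II, given $a\in K$ and an index $i$, one first enlarges the associated difference ring $(K,\sigma_1,\dots,\sigma_t)$ to an inversive one carrying a preimage of $a$ under $\sigma_i$, and then lifts the full $\cD$-data along the resulting \'etale tower to land in $\cK$. For III, pick a $K$-generic point $b$ of $Y$ and write it in prolongation coordinates as $b=(a,a_1,\dots,a_{\ell-1})$; the defining equations of $\tau X$ are exactly so that declaring $\partial_j(a):=a_j$ and extending $A$-linearly and multiplicatively yields a $\cD$-structure on $K[a]$ (equivalently, the multiplicativity identities~(\ref{multrule}) and~(\ref{idrule}) on the declared operators reduce to the vanishing of the $P^{(j)}$'s at $b$). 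The density hypothesis on $\widehat{\pi_i}(Y)$ says precisely that $\sigma_i(a)$ is $K$-generic in $X^{\sigma_i}$, so each induced $\sigma_i$ on $K[a]$, and hence on its fraction field, is injective, keeping the extension inside $\cK$. Existential closedness of $(K,\partial)$ then places a realisation already in $K$.

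For sufficiency, let $\phi(x)$ be a quantifier-free formula over $K$ realised by a tuple $c$ in some $(L,\partial)\in\cK$ extending $(K,\partial)$. After replacing $L$ by an inversive enlargement in $\cK$ and $c$ by a tuple collecting the finitely many $\partial_j$- and $\sigma_i^{-1}$-iterates of its entries that appear in $\phi$, we may assume $\phi$ is a Boolean combination of polynomial conditions in $c$ alone. Let $X=\operatorname{loc}(c/K)$ and $Y=\operatorname{loc}(\nabla(c)/K)\subseteq\tau X$. Injectivity of $\sigma_i$ on $L$ together with $K$-genericity of $c$ in $X$ forces $\sigma_i(c)=\widehat{\pi_i}(\nabla(c))$ to be $K$-generic in $X^{\sigma_i}$, so $\widehat{\pi_i}(Y)$ is Zariski dense in $X^{\sigma_i}$, and III produces $a\in X(K)$ with $\nabla(a)\in Y(K)$; restricting to the open subvariety of $X$ encoding the inequations of $\phi$ (nonempty by genericity of $c$) then yields a realisation of $\phi$ in $K$. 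The main obstacle is technical bookkeeping at two points: verifying that the prolongation equations capture exactly the polynomial obstructions to setting $\partial_j(a)=a_j$, and that the density condition on the $\widehat{\pi_i}$ is exactly what is needed to preserve injectivity of the associated endomorphisms, hence membership in $\cK$ — this is where the preparatory analysis of Section~\ref{Dringsec}, and in particular the decomposition $\cD(A)=\prod_{i=0}^t B_i$ underlying the definition of the $\sigma_i$, is genuinely used.
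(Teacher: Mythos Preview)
Your outline is essentially the paper's own argument for Theorem~\ref{ecedomains}, which is the real content behind the corollary; the paper organises the necessity direction through an auxiliary class $\cM$ of triples $(R,S,\partial)$ and three extension lemmas (to fraction fields, to algebraic closures via Hensel, and to inversive enlargements), but your sketch hits the same points in the same order.

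Two things are worth flagging. First, the corollary itself needs one step you do not mention: that conditions I--III are expressible by a first-order scheme. Condition III quantifies over irreducible $X$ and irreducible $Y\subseteq\tau X$ and asserts Zariski-density of the images $\widehat{\pi_i}(Y)$; one must check that $\tau X$, $X^{\sigma_i}$, and the morphisms $\widehat{\pi_i}$ vary uniformly as $X$ ranges over an algebraic family, and then invoke definability of irreducibility and dominance in $\operatorname{ACF}$. The paper spends a paragraph on exactly this before stating the corollary.

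Second, in your sufficiency argument the reason $\widehat{\pi_i}(\nabla(c))=\sigma_i(c)$ is $K$-generic in $X^{\sigma_i}$ is not ``injectivity of $\sigma_i$ on $L$'' but rather that $\sigma_i$ restricts to an \emph{automorphism of $K$} (this is axiom~II, which you are assuming); injectivity on the larger field alone does not transport $K$-genericity across base change. Relatedly, the $\sigma_i^{-1}$-iterates you collect into $c$ are unnecessary: $\cL_\cD$ contains only the $\partial_j$, so quantifier-free formulas involve only $\partial$-words, and the paper accordingly takes $c=\nabla_{r-1}(c_0)$. Finally, having found $a\in X(K)$ with $\nabla(a)\in Y(K)$, one still has to verify that the coordinates of $a$ really are the successive $\partial$-iterates of its first block; the paper does this by a short induction on word length, which your sketch leaves implicit.
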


We now work toward a proof of Theorem~\ref{ecedomains}.

To prove properties~I through~III of Theorem~\ref{ecedomains} for every existentially closed model in $\cK$ is to prove various extension lemmas about $\cK$.
In order to facilitate this we introduce the following auxiliary class.

\begin{definition}[The class $\cM$]
The class $\cM$ is defined to be the class of triples $(R,S,\partial)$ where $R\subseteq S$ are integral $A$-algebras of characteristic zero and $\partial=(\partial_1,\dots,\partial_{\ell-1})$ is a sequence of maps from $R$ to $S$ such that $e:R\to\cD(S)$ given by $e(r):=r\epsilon_0+\partial_1(r)\epsilon_1+\cdots+\partial_{\ell-1}(r)\epsilon_{\ell-1}$ has the following properties:
\begin{itemize}
\item[(i)]
$e$ is an $A$-algebra homomorphism,
\item[(ii)]
 for each $i=1,\dots, t$, $\sigma_i:=\pi_i^S\circ e:R\to S$ is injective.
\end{itemize}
Note that $\sigma_0:=\pi_0^S\circ e=\pi^S\circ e$ is then the inclusion map.
\end{definition}

So $(R,\partial)\in\cK$ if and only if $(R,R,\partial)\in\cM$.

The following lemma will imply that every existentially closed member of $\mathcal K$ is a field.
It is here that we require the associated endomorphisms to be injective.

\begin{lemma}
\label{extendtofields}
Suppose $(R,L,\partial)\in\cM$ with $L$ a field.
Then we can (uniquely) extend $\partial$ to the fraction field $F$ of $R$ so that $(F,L,\partial)\in\cM$.
\end{lemma}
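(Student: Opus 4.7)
The plan is to use the fraction field construction at the level of the ring homomorphism $e:R\to\cD(L)$ rather than directly at the level of the operators $\partial_j$. Since $L$ is a field containing $R$, the fraction field $F$ sits naturally inside $L$. The key point is that every nonzero element $y\in R$ has $e(y)$ invertible in $\cD(L)$; if I can show this, then $e$ extends uniquely to a ring homomorphism $F\to\cD(L)$ by $e(x/y):=e(x)e(y)^{-1}$, and the extended operators $\partial_j$ are read off as the coefficients with respect to the basis $(\epsilon_0,\dots,\epsilon_{\ell-1})$.

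The invertibility claim is the main technical step, and it is where Assumption~\ref{assumptionA}(ii) together with the hypothesis that the $\sigma_i$ are injective comes in. Using the decomposition $\cD(A)=\prod_{i=0}^{t}B_i$ with each $B_i$ local and having residue field $A$, I base change to $L$ to get
\[
\cD(L)\;=\;L\otimes_A\cD(A)\;=\;\prod_{i=0}^{t}L\otimes_A B_i,
\]
where each factor $L\otimes_A B_i$ is again local with residue field $L$, and the residue map on the $i$-th factor is precisely $\pi_i^L$. An element of $\cD(L)$ is therefore a unit if and only if each of its images under $\pi_0^L,\dots,\pi_t^L$ is nonzero in $L$. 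Applying this to $e(y)$ and using $\pi_i^L\circ e=\sigma_i$, I get that $e(y)$ is a unit iff $\sigma_i(y)\neq 0$ for all $i$, which holds for every nonzero $y\in R$ by the injectivity assumption in the definition of $\cM$.

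With invertibility in hand, the universal property of the fraction field produces a unique ring homomorphism $\widetilde{e}:F\to\cD(L)$ extending $e$. Composing with $\pi^L$ gives a ring homomorphism $F\to L$ extending the inclusion $R\hookrightarrow L$, which must be the inclusion (since $F$ and $L$ are fields and $F$ is minimal over $R$), so $\widetilde{e}$ really is a section of $\pi^L$ over $F$. Reading off components against $(\epsilon_0,\dots,\epsilon_{\ell-1})$ defines the extended $\partial_j:F\to L$, and condition~(i) of $\cM$ holds by construction.

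To verify condition~(ii), I note that each $\sigma_i=\pi_i^L\circ\widetilde{e}:F\to L$ is a ring homomorphism from a field to a field, hence either zero or injective; since it restricts to the nonzero (in fact injective) map $\sigma_i|_R$, it must be injective. Uniqueness of the extended $\partial$ is automatic: any $\cM$-extension of $\partial$ to $F$ yields an $A$-algebra homomorphism $F\to\cD(L)$ extending $e$, and there is at most one such extension of a homomorphism from a domain to its fraction field into any ring. I do not anticipate any serious obstacle beyond the invertibility step, which is really the whole content of the lemma.
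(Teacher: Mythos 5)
Your proposal is correct and follows essentially the same route as the paper: show that $e$ sends nonzero elements of $R$ to units of $\cD(L)$ by decomposing $\cD(L)$ into the local factors $\cD_i(L)$ and using the injectivity of the $\sigma_i$ (with Assumption~\ref{assumptionA}(ii) guaranteeing residue field $L$), then invoke the universal property of localisation and read off the extended operators. The remaining verifications (that $\pi^L\circ\widetilde e$ is the inclusion and that the extended $\sigma_i$ are injective as maps between fields) match the paper's argument.
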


\begin{proof}
Let us first extend $e:R\to\cD(L)$ to an $A$-algebra homomorphism from $F$ to $\cD(L)$.
By the universal property of localisation it suffices (and is necessary) to show that  $e$ takes nonzero elements of $R$ to units in $\cD(L)$.
Note that an element $x\in\cD(L)$ is a unit if and only if each of its projections $\theta^L_i(x)$ is a unit in the local $L$-algebra $\cD_i(L)$, which in turn is equivalent to the residue of $\theta^L_i(x)$, namely $\pi^L_i(x)\in L$, being nonzero.
But for all $i=1,\dots,t$, $\pi^L_i\circ e$ is injective on $R$ by assumption.
So $e(a)$ is a unit in $\cD(L)$ for nonzero $a\in R$.

We thus have an extension $e:F\to\cD(L)$.
The injectivity of $\sigma_1,\dots,\sigma_t$ is immediate as these are $A$-algebra homomorphisms between fields.
Moreover, $\pi\circ e$ is the identity on $F$ as it is extends the identity on $R$.
So, letting $\partial$ be the operators corresponding to $e:F\to\cD(L)$, we have that $(F,L,\partial)\in\cM$.
\end{proof}

The next lemma shows that existentially closed models are algebraically closed.

\begin{lemma}
\label{extendtoalg}
Suppose $(F,L,\partial)\in\cM$ where $F$ and $L$ are fields and $L$ is algebraically closed.
Then we can extend~$\partial$ to $F^{\alg}$ so that $(F^{\alg},L,\partial)\in\cM$.

In fact, more is true.
If $\sigma$ is the tuple of embeddings $F\to L$ associated to $\partial$, and $\sigma'$ is any extension of $\sigma$ to $F^{\alg}$, then there is exactly one extension $\partial'$ of $\partial$ to $F^{\alg}$ with associated embeddings $\sigma'$.
\end{lemma}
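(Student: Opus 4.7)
The plan is to exploit the product decomposition of $\cD(L)$ afforded by Assumptions~\ref{assumptionA} in order to reduce the problem to a Hensel-style lifting on each factor, where separability of $F^{\alg}/F$ will supply both existence and uniqueness.

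First I would decompose $e:F\to\cD(L)$ componentwise. By Assumption~\ref{assumptionA}(ii), $\cD(A)=\prod_{i=0}^t B_i$ with each $B_i$ a local finite $A$-algebra of residue field $A$. Tensoring with $L$ gives $\cD(L)=\prod_{i=0}^t\cD_i(L)$, where each $\cD_i(L)=L\otimes_A B_i$ is a finite local $L$-algebra with residue field $L$ (residue map $\rho_i^L$) and nilpotent maximal ideal. Writing $e=(e_0,\ldots,e_t)$ componentwise, each $e_i:F\to\cD_i(L)$ is an $A$-algebra homomorphism whose reduction modulo the maximal ideal is $\sigma_i=\rho_i^L\circ e_i$. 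The whole problem then splits into independently lifting each $e_i$ to $F^{\alg}$ so that its reduction agrees with the prescribed extension $\sigma_i'$.

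The heart of the matter, and the main anticipated obstacle, is this lifting step. Since $F^{\alg}/F$ is a filtered colimit of finite separable, hence étale, extensions of $F$, and $\cD_i(L)\to L$ is a surjection with nilpotent kernel, the infinitesimal lifting property for étale morphisms produces a unique $A$-algebra homomorphism $e_i':F^{\alg}\to\cD_i(L)$ extending $e_i$ and satisfying $\rho_i^L\circ e_i'=\sigma_i'$. Element by element: for $\alpha\in F^{\alg}$ with minimal polynomial $P\in F[x]$, $\sigma_i'(\alpha)$ is a simple root of $\sigma_i(P)\in L[x]$ by separability of $P$, so Hensel's lemma in the complete Artinian local ring $\cD_i(L)$ produces a unique root of $P^{e_i}$ in $\cD_i(L)$ lying over $\sigma_i'(\alpha)$. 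Compatibility of these pointwise lifts with addition and multiplication follows from the same uniqueness applied to the minimal polynomial of $\alpha+\beta$ or $\alpha\beta$, so $e_i'$ is in fact a ring homomorphism.

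Combining the $e_i'$ gives a unique $A$-algebra homomorphism $e':F^{\alg}\to\cD(L)$ extending $e$ with $\pi_i^L\circ e'=\sigma_i'$ for every $i$. Using $\sigma_0'$ to view $F^{\alg}$ as a subfield of $L$, the identity $\pi^L\circ e'=\sigma_0'$ says precisely that $e'$ has the form $r\mapsto r\epsilon_0+\partial_1'(r)\epsilon_1+\cdots+\partial_{\ell-1}'(r)\epsilon_{\ell-1}$, thereby defining the desired extension $\partial'$ of $\partial$. The membership $(F^{\alg},L,\partial')\in\cM$ is then automatic: $e'$ is an $A$-algebra homomorphism by construction, and each associated embedding $\sigma_i'$ is injective as a map between fields. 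Uniqueness of $\partial'$ given $\sigma'$ is inherited from the uniqueness in the lifting step applied on each component.
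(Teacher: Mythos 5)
Your proposal is correct and follows essentially the same route as the paper: decompose $\cD(L)$ into its local artinian factors $\cD_i(L)$ with residue field $L$ and nilpotent maximal ideal, and lift the prescribed embeddings $\sigma_i'$ through the nilpotent kernel by Hensel's lemma, with separability (characteristic zero, \'etaleness) supplying both existence and uniqueness of the lift. The only cosmetic difference is that you invoke the formal \'etale lifting property on $F^{\alg}$ as a filtered colimit all at once, whereas the paper iterates over simple extensions $F(a)$ using the minimal polynomial of $a$; this also makes your somewhat loose remark about verifying additivity and multiplicativity via the minimal polynomials of $\alpha+\beta$ and $\alpha\beta$ unnecessary, since the lifting criterion already produces a ring homomorphism.
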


\begin{proof}
By iteration it suffices to prove, for any given $a\in F^{\alg}$, that we can
extend~$e$ to $F(a)$ in such a way that $\pi^L\circ e$ is still the identity on $F(a)$.
Let $P(x)\in F[x]$ be the minimal polynomial of $a$ over $F$.
Fixing $i=1,\ldots,t$, we let $c_i\in L$ be a root for $P^{\sigma_i}(x)\in L[x]$, where $\sigma_i:F\to L$ is the field embedding $\pi_i^L\circ e:F\to L$.
We cover the $i=0$ case by letting $c_0:=a$; note that as $P^{\sigma_0}(x)=P(x)$, $c_0$ is a root of $P^{\sigma_0}(x)$.
Now, let $e_i:F\to \cD_i(L)$ be $\theta_i^L\circ e$.
Note that by construction $P^{\sigma_i}(x)\in L[x]$ is the reduction of $P^{e_i}(x)\in\cD_i(L)[x]$ modulo the maximal ideal of the local artinian ring $\cD_i(L)$.
Since $P^{\sigma_i}(x)$ is separable (we are in characteristic zero), Hensel's Lemma allows us to lift~$c_i$ to a root~$b_i$ of $P^{e_i}(x)$ in $\cD_i(L)$.
In fact there is a unique such lifting as $F \hookrightarrow F(a)$ is \'{e}tale.
Then $b=(b_0,\dots,b_t)\in\cD(L)$ is a root of $P^e(x)\in\cD(L)[x]$, and we can extend $e$ to $F(a)$ by sending $a$ to $b$.
By construction $\pi^Le(a)=\pi_0^Le(a)=c_0=a$, so that $\pi^L\circ e=\id_{F(a)}$.

In the above argument any choice of roots $c_1,\dots,c_t$ works, and once that choice is made there is a unique possibility for $b$.
This leads to the ``in fact'' clause of the lemma.
\end{proof}

The next lemma will imply that existentially closed models are inversive.

\begin{lemma}
\label{extendtoautos}
Suppose $(F,L,\partial)\in\cM$ where $F$ and $L$ are fields.
Then there exists an extension $L'$ of $L$ and an extension of $\partial$ to an inversive $\cD$-field structure on $L'$.
\end{lemma}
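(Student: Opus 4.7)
The plan is three phases: first reduce to $F$ and $L$ both algebraically closed; then embed $L$ in a sufficiently large algebraically closed field $\Omega$ so that each associated embedding $\sigma_j:F\to L$ extends to a field automorphism $\tilde\sigma_j$ of $\Omega$; finally extend $\partial$ to a $\cD$-field structure on $\Omega$ whose associated endomorphisms are exactly the $\tilde\sigma_j$, so that $(\Omega,\partial)$ is automatically inversive.

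For the first phase, replace $L$ by $L^{\alg}$ (this keeps $(F,L,\partial)\in\cM$ since the $\sigma_j$ remain injective) and apply Lemma~\ref{extendtoalg} to extend $\partial$ to $F^{\alg}\subseteq L^{\alg}$, yielding $(F^{\alg},L^{\alg},\partial)\in\cM$; then assume $F=F^{\alg}$ and $L=L^{\alg}$.  For the second phase, pick $\Omega\supseteq L$ algebraically closed of transcendence degree strictly larger than $|L|$ over the prime field.  Since $F$ and each $\sigma_j(F)$ are algebraically closed subfields of $\Omega$ of cardinality less than $|\Omega|$, a standard transcendence-basis isomorphism argument extends each $\sigma_j$ to an automorphism $\tilde\sigma_j$ of $\Omega$; set $\tilde\sigma_0:=\id_\Omega$.

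For the third phase, choose a transcendence basis $T$ of $\Omega$ over $F$.  Using the product decomposition $\cD(\Omega)=\prod_{i=0}^{t}\cD_i(\Omega)$, for each $\tau\in T$ set
$$e(\tau):=\big(\tilde\sigma_i(\tau)\otimes 1_{B_i}\big)_{i=0,\dots,t}\in\cD(\Omega),$$
so that $\pi_i^\Omega(e(\tau))=\tilde\sigma_i(\tau)$, and in particular $\pi(e(\tau))=\tau$.  Since $F[T]$ is a polynomial ring over $F$, this together with the given $e|_F$ determines a unique $A$-algebra homomorphism $e:F[T]\to\cD(\Omega)$.  For any nonzero $p\in F[T]$ each projection $\pi_i^\Omega(e(p))=\tilde\sigma_i(p)$ is nonzero (as $\tilde\sigma_i$ is an automorphism), so $e(p)$ is a unit in $\cD(\Omega)$; therefore $e$ extends to an $A$-algebra homomorphism $F(T)\to\cD(\Omega)$, putting $(F(T),\Omega,\partial)\in\cM$ with associated embeddings $\tilde\sigma_i|_{F(T)}$.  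Applying the ``in fact'' clause of Lemma~\ref{extendtoalg} with the prescribed extensions $\tilde\sigma_i|_\Omega$ across $\Omega=F(T)^{\alg}$, one obtains a $\cD$-field structure $e:\Omega\to\cD(\Omega)$ whose associated endomorphisms are the $\tilde\sigma_j$.  These are automorphisms of $\Omega$, so $(\Omega,\partial)$ is inversive; take $L':=\Omega$.

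The delicate step is Phase~3: prescribing the associated endomorphisms on the final field in advance (which is precisely what forces inversiveness) while still being able to use Lemma~\ref{extendtoalg} to extend $\partial$ across the algebraic closure.  Our approach resolves this by first fixing the $\tilde\sigma_j$ on all of $\Omega$, then ``diagonally'' lifting them via the product decomposition of $\cD(\Omega)$ to define $e$ on a transcendence basis, and finally invoking the uniqueness of extensions with prescribed associated embeddings to fill in $e$ on $\Omega=F(T)^{\alg}$.
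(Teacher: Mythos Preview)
Your proof is correct and follows essentially the same route as the paper's: extend the associated embeddings to automorphisms of a large algebraically closed field, define $e$ on a transcendence basis via the product decomposition $\cD=\prod_i\cD_i$ so as to have the prescribed $\pi_i\circ e$, pass to the fraction field, and then invoke the ``in fact'' clause of Lemma~\ref{extendtoalg} to reach the algebraic closure with the prescribed associated automorphisms. The only differences are cosmetic: your Phase~1 reduction to $F=F^{\alg}$ is unnecessary (the paper works directly over $F$), and you choose the specific lift $\tilde\sigma_i(\tau)\otimes 1_{B_i}$ whereas the paper allows an arbitrary lift in $\cD_i(L')$, but neither affects the argument.
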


\begin{proof}
First, we can extend $\sigma_1,\dots,\sigma_t$ to automorphisms $\sigma_1',\dots,\sigma_t'$ of some algebraically closed $L'\supseteq L$.
Now fix a transcendence basis $B$ for $L'$ over $F$.
For each $b\in B$ let $b_0\in\cD_0(L')$ lift $b$, and let $b_i\in\cD_i(L')$ lift $\sigma_i'(b)$ for $i=1,\dots,t$.
Then define $e(b)$ to be $(b_0,\dots,b_t)\in\cD(L')$.
This gives us an extension $F[B]\to\cD\big(L'\big)$ of $e$ such that $\pi^{L'}\circ e,\ \pi_1^{L'}\circ e,\dots,\pi_t^{L'}\circ e$ agree with $\id,\sigma_1',\dots,\sigma_t'$, respectively.
By Lemma~\ref{extendtofields} we can extend $e$ to $F(B)\to\cD(L')$ preserving this property.
By Lemma~\ref{extendtoalg} we can extend $e$ to a $\cD$-structure on $L'=F(B)^{\alg}$ in such a way that the associated endomorphisms remain the automorphisms $\sigma_1',\dots,\sigma_t'$.
\end{proof}

\begin{proof}[Proof of Theorem~\ref{ecedomains}]
Suppose $(K,\partial)\in \cK$ is existentially closed.
By Lemmas~\ref{extendtofields}, \ref{extendtoalg}, and~\ref{extendtoautos} we know that $K$ is an algebraically closed field and that $\sigma_1,\dots,\sigma_t$ are automorphisms of $K$.
It remains to check condition~III.
Let $X\subseteq\mathbb A^n_K$ and $Y\subseteq\tau X$ be as in that condition.
Let $L$ be an algebraically closed field
extending $K$ and let $b \in Y(L)$ be
a $K$-generic point of $Y$.
Let $a:=\hat\pi(b)\in X({L})$.
Our goal is to extend $\partial$ to a $\cD$-field structure on some extension of $L$ in such a way that $\nabla(a)=b$.
This will suffice, because then by existential closedness there must exist a $K$-point of $X$ with the property that its image under $\nabla$ is a $K$-point of $Y$.

As described in~$\S4$ of~\cite{paperA}, $\tau X({L})$ can be canonically identified with the $\cD({L})$-points of the affine scheme over $\cD({K})$ obtained from $X$ by applying $e$ to the coefficients of the defining polynomials.
Let $b'$ be the $n$-tuple from $\cD({L})$ that corresponds to $b\in \tau X({L})$ under this identification.
So $P^e(b')=0$ for all $P(x)\in I(X/K)$.
Since $\hat\pi(Y)=\hat\pi_0(Y)$ is Zariski dense in $X^{\sigma_0}=X$ and $b$ is $K$-generic in $Y$, we have that $a=\hat\pi(b)$ is $K$-generic in $X$.
So $I(X)=I(a/K)$.
We thus have that $P^e(b')=0$ for all $P(x)\in I(a/K)$.
That is, we can extend $e:K\to\cD({L})$ to an $A$-algebra homomorphism $e:K[a]\to\cD({L})$ by $e(a)=b'$.
The fact that $\hat\pi(b)=a$ implies that $\pi^{L}(b')=a$ so that $\pi^{L}\circ e=\id_{K[a]}$.
For each $i=1, \ldots, t$, the fact that $\hat\pi_i(Y)$ is Zariski dense in $X^{\sigma_i}$ implies that $\hat\pi_i(b)$ is $K$-generic in $X^{\sigma_i}$, and hence for any $P(x)\in K[x]$ on which $a$ does not vanish, $\pi^{L}_ie\big(P(a)\big)=P^{\sigma_i}\big(\pi^{L}_i(b')\big)\neq 0$.
That is, $\pi_i^{L}\circ e:K[a]\to{L}$ is injective for each $i=1,\dots,t$.
So, letting $\partial$ be the corresponding operators, we have $\big(K[a],{L},\partial\big)\in\cM$.
By Lemma~\ref{extendtofields} this extends to $\big(K(a),{L},\partial\big)\in\cM$.
By Lemma~\ref{extendtoautos}, there is an extension ${L}'$ of ${L}$ such that $\partial$ extends to a $\cD$-field structure on ${L}'$.
The fact that $e(a)=b'$ implies that $\nabla(a)=b$, as desired.

Now for the converse.
Suppose $(K,\partial)$ is a $\cD$-field satisfying~I through~III.
To show that $(K,\partial)$ is existentially closed in $\cK$ it suffices to consider a conjunction of atomic
$\cL_{\cD,K}$-formulae that is realised in some extension of $(K,\partial)$ in $\cK$, and show that it is already realised in $(K,\partial)$.
Indeed, all inequations of the form $t(x_1,\dots,x_m)\neq 0$ that might appear can be replaced by $t(x_1,\dots,x_m)y-1=0$ where $y$ is a new variable.
We can also assume, by Lemmas~\ref{extendtofields} and~\ref{extendtoalg}, that the extension in which we have a realisation is an algebraically closed $\cD$-field.

Let $\phi(x)$ be a conjunction of atomic $\cL_{\cD,K}$-formulae where $x=(x_1,\dots,x_m)$ is an $m$-tuple of variables, let $(L,\partial)$ be an algebraically closed $\cD$-field extension of $(K,\partial)$, and let $c_0\in L^m$ realise $\phi(x)$.
Let $\Xi$ be the set of all finite words on $\{\partial_1,\dots,\partial_{\ell-1}\}$, and for each $r\geq 0$ let $\Xi_r$ be those words of length at most $r$.
Fix an enumeration of $\Xi$ so that  $\Xi_r$ is an initial segment of $\Xi_{r+1}$ for all $r\geq 0$.
Define $\nabla_r:L\to L^{n_r}$ by $b\mapsto\big(\xi(b):\xi\in\Xi_r\big)$, where $n_r:=|\Xi|$.
Then for some $r\geq 0$, $\phi(x)^L=\{b\in L^m:\nabla_r(b)\in Z\}$ where $Z\subseteq L^{mn_r}$ is a Zariski-closed set over $K$.
Note that if $r=0$ then $\phi(x)$ is equivalent to a formula over $K$ in the language of rings with a realisation in an extension, and so, as $K$ is algebraically closed, $\phi(x)$ is realised in $K$.
We may thus assume that $r>0$.
Let
\begin{eqnarray*}
c &:=& \nabla_{r-1}(c_0)\in L^{mn_{r-1}}\\
X &:=& \operatorname{loc}(c/K)\subseteq L^{mn_{r-1}}\\
Y &:=& \operatorname{loc}(\nabla c/K)\subseteq\tau X(L)\subseteq L^{\ell mn_{r-1}}
\end{eqnarray*}
Note that for each $i=0,\ldots, t$,  $\hat\pi_i(\nabla c)=\sigma_i(c)\in X^{\sigma_i}(L)$.
Since $\nabla c$ is $K$-generic in $Y$ and $\sigma_i(c)$ is $K$-generic in $X^{\sigma_i}$ (as $\sigma_i$ restricts to an automorphism of $K$ by assumption), it follows that $\hat\pi_i(Y)$ is Zariski dense in $X^{\sigma_i}$.
Hence, by~III, there exists $a\in X(K)$ such that $\nabla a\in Y(K)$.
Let $a_0$ be the first $m$ co-ordinates of $a$.
It remains to verify that $a_0$ satisfies $\phi(x)$; that is, that $\nabla_r(a_0)\in Z(K)$.

First of all, we note that $\nabla_{r-1}(a_0)=a$.
Indeed, we show by induction on the length of $\xi\in\Xi_{r-1}$ that $\xi(a_0)=a_{\xi}$, where $a=(a_{\xi}:\xi\in\Xi_{r-1})$.
For $\xi=\id$ this is clear by choice of $a_0$.
Now suppose $\xi=\partial_i\xi'$.
Since $\nabla_{r-1}(c_0)=c$, we know that $\partial_ic_{\xi'}=c_{\xi}$.
Because $\nabla a$ is in the $K$-locus of $\nabla c$, we have $\partial_ia_{\xi'}=a_{\xi}$ also.
But by the inductive hypothesis, $\partial_ia_{\xi'}=\partial_i \xi'(a_0)=\xi(a_0)$, so that $\xi(a_0)=a_{\xi}$ as desired.

Finally, since $c_0$ is a realisation of $\phi(x)$, we know that $\nabla_r c_0\in Z$.
The latter can be seen as an algebraic fact about $\nabla\nabla_{r-1}c_0$.
Since $\nabla\nabla_{r-1}a_0=\nabla a$ is in the $K$-locus of $\nabla\nabla_{r-1}c_0=\nabla c$, it follows that $\nabla_r a_0\in Z$, as desired.

This completes the proof of Theorem~\ref{ecedomains}.
\end{proof}

Theorem~\ref{ecedomains} specialised to the various examples, say in~\ref{examples}, will yield model companions for a variety of theories of fields with operators.
In the classical examples one recovers the known ``geometric'' axiomatisations.
We conclude by pointing out that the difference field associated to a $\cD$-closed field is {\em difference-closed}.
Note that, as a consequence of this, the theory of $\cD$-fields imposes no non-trivial functional equations on the associated endomorphisms.
%writing down the case of partial difference fields, only because this case does not seem to appear explicitly in the literature.\marginpar{\tiny Look in literature.}

%\begin{corollary}
%\label{acfa}
%The theory of fields of chatacteristic zero equipped with $t$ (not necessarilly commuting) automorphisms has a model companion, which we denote by $\operatorname{ACFA}_{0,t}$, and which is axiomatised by
%\begin{itemize}
%\item[I.]
%$K$ is an algebraically closed field,
%\item[II.]
%$\sigma_1,\dots,\sigma_t$ are automorphisms of $K$,
%\item[III.]
%if $X$ is an irreducible affine variety over $K$ and $Y\subseteq X\times X^{\sigma_1}\times\cdots\times X^{\sigma_t}$ is an irreducible subvariety over $K$ whose projections onto each factor are Zariski-dense, then there exists $a\in X(K)$ with
%$$\big(a,\sigma_1(a),\dots,\sigma_t(a)\big)\in Y(K)$$
%\end{itemize}
%\end{corollary}

%\begin{proof}
%Apply Theorem~\ref{ecedomains} in the case when $A=\mathbb Q$ and $\cD(\mathbb Q)=\mathbb Q^{t+1}$ with the product $\mathbb Q$-algebra structure and the standard $\mathbb Q$-basis.
%\end{proof}

\begin{proposition}
\label{assocacfa}
If $(K,\partial)\models\ecdf$ and $(K,\sigma)$ is the associated difference field, then $(K,\sigma)\models\operatorname{ACFA}_{0,t}$, that is, it is an existentially closed model of the theory of fields of chatacteristic zero equipped with $t$ (not necessarily commuting) automorphisms.
\end{proposition}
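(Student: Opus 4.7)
By Theorem~\ref{ecedomains}, conditions~I and~II already give us that $K$ is algebraically closed and that $\sigma_1,\dots,\sigma_t$ are automorphisms of $K$. It therefore suffices to verify the geometric axiom characterizing existentially closed models of the theory of fields of characteristic zero equipped with $t$ (non-commuting) automorphisms: given an irreducible affine variety $V$ over $K$ and an irreducible subvariety $W\subseteq V\times V^{\sigma_1}\times\cdots\times V^{\sigma_t}$ whose projection onto each factor is Zariski dense, find $a\in V(K)$ with $(a,\sigma_1(a),\dots,\sigma_t(a))\in W(K)$.

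The plan is to lift $W$ to a subvariety of $\tau V$ and then invoke axiom~III of Theorem~\ref{ecedomains}. The decomposition $\cD=\prod_{i=0}^t\cD_i$ from~$\S$\ref{assocend-section} yields, by evaluating on $\cD(L)$-points, a canonical isomorphism $\tau V\cong \tau_0 V\times_K\cdots\times_K\tau_t V$ where $\tau_i V:=\tau(V,\cD_i,\theta_i\circ e)$. Composing each factor with the surjection $\hat\rho_i:\tau_i V\to V^{\sigma_i}$ coming from $\rho_i:\cD_i\to\bbs$, and identifying $V^{\sigma_0}=V$, we obtain a morphism
\[
\Pi=(\hat\pi_0,\hat\pi_1,\dots,\hat\pi_t):\tau V\longrightarrow V\times V^{\sigma_1}\times\cdots\times V^{\sigma_t}.
\]
Over the smooth locus of $V$, each $\hat\pi_i$ is a smooth surjection (a torsor under a vector bundle coming from the maximal ideal of $B_i$), so $\Pi$ is also smooth and surjective there. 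I would then let $Y$ be any irreducible component of $\Pi^{-1}(W)$ that projects dominantly onto $W$; composing with the dominant projections $W\to V^{\sigma_i}$ guarantees that $\hat\pi_i(Y)$ is Zariski dense in $V^{\sigma_i}$ for every $i=0,\dots,t$.

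Applying axiom~III to the pair $(V,Y)$ produces $a\in V(K)$ with $\nabla(a)\in Y(K)$. Because $\hat\pi_i\circ\nabla=\sigma_i$ (this is precisely the definition of the associated endomorphism as $\pi_i^K\circ e$), we obtain
\[
(a,\sigma_1(a),\dots,\sigma_t(a))=\Pi(\nabla(a))\in W(K),
\]
which is what was required.

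The main obstacle is the geometric step, namely checking that $\Pi$ is surjective on an appropriate open set and that $\Pi^{-1}(W)$ has an irreducible component dominating $W$. This is a matter of unwinding the explicit coordinate description of $\tau V$ given at the end of~$\S$\ref{Dringsec} using the basis of $\cD(A)$ adapted to the decomposition $\cD(A)=\prod_i B_i$, and invoking generic smoothness of $V$ in characteristic zero. Once this geometric fact is in hand, the application of axiom~III is essentially formal.
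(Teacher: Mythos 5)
Your proposal is correct and follows essentially the same route as the paper: the paper's proof likewise pulls $W$ back under the morphism $\tau(V,\cD,e)\to V\times V^{\sigma_1}\times\cdots\times V^{\sigma_t}$, picks an irreducible component dominating $W$, and applies axiom~III of Theorem~\ref{ecedomains}. The only difference is cosmetic: where the paper asserts parenthetically that a dominating component exists, you supply the (correct) justification via the decomposition $\cD=\prod_i\cD_i$ and generic smoothness in characteristic zero.
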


\begin{proof}
The axioms for $\operatorname{ACFA}_{0,t}$ appear in $\S1.2$ of~\cite{HrMM}.
They say that $K$ should be algebraically closed, $\sigma_1,\dots,\sigma_t$ should be automorphisms of $K$, and, the only one that requires checking, {\em if $X$ is an irreducible affine variety over $K$ and $Y\subseteq X\times X^{\sigma_1}\times\cdots\times X^{\sigma_t}$ is an irreducible subvariety over $K$ whose projections onto each factor are Zariski-dense, then there should exist $a\in X(K)$ with $\big(a,\sigma_1(a),\dots,\sigma_t(a)\big)\in Y(K)$}.
To check this, consider the pull-back $Y'$ of $Y$ under $\tau(X,\cD,e)\to X\times X^{\sigma_1}\times\cdots\times X^{\sigma_t}$, and apply axiom~III of Theorem~\ref{ecedomains} to an irreducible component of $Y'$ that projects dominantly onto $Y$ (there will be one).
\end{proof}

\bigskip
\section{Basic model theory of $\ecdf$}
\label{basicmodel}

\noindent
We begin now to investigate the model theory of $\ecdf$ using the study of existentially closed difference fields as it appears in~$\S 1$ of~\cite{acfa1} as a template.
Assumptions~\ref{assumptionA} and the notation of the previous chapter remain in place.

\subsection{Completions}
We aim to describe the completions of $\ecdf$.

\begin{lemma}
\label{amalgam}
Suppose $(K,\partial)$ and $(L,\gamma)$ are inversive $\cD$-fields extending an inversive $\cD$-field $(F,\partial)$ with $K$ and $L$ linearly disjoint over $F$ (inside some fixed common field extension).
Then we can simultaneously extend $(K,\partial)$ and $(L,\gamma)$ uniquely to a $\cD$-field structure on the compositum $KL$.
\end{lemma}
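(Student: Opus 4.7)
The plan is to work in the $e$-formulation of $\cD$-rings. We have $A$-algebra homomorphisms $e_K:K\to\cD(K)$ and $e_L:L\to\cD(L)$, both sections of the appropriate $\pi$, and both restricting on $F$ to the common map $e_F:F\to\cD(F)$. The strategy is to first construct the structure on the subring $R:=K\otimes_FL$, which embeds into $KL$ by linear disjointness, and then push it out to the fraction field $KL$ using Lemma~\ref{extendtofields}. The prior work needed to set this up is the verification that $(R,KL,\partial)\in\cM$, i.e., that the associated endomorphisms are injective on $R$.

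First I would define $\tilde e:K\otimes_FL\to\cD(KL)$ by $\tilde e(k\otimes\ell):=e_K(k)\cdot e_L(\ell)$, where we view $\cD(K),\cD(L)\subseteq\cD(KL)$ via the functoriality of $\cD$. Because $e_K$ and $e_L$ agree on $F$, the map $(k,\ell)\mapsto e_K(k)e_L(\ell)$ is $F$-bilinear, so by the universal property of $\otimes_F$ it descends to an $A$-algebra homomorphism $\tilde e$. By construction $\tilde e$ restricts to $e_K$ on $K\otimes 1$ and to $e_L$ on $1\otimes L$, and $\pi^{KL}\circ\tilde e$ agrees with the inclusion $R\hookrightarrow KL$ on the generators $k\otimes 1$ and $1\otimes\ell$, so $\pi^{KL}\circ\tilde e$ is the inclusion. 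Since $K$ and $L$ are linearly disjoint over $F$, the multiplication map $R\to KL$ is injective; hence $R$ is an integral domain with fraction field $KL$, and we may regard $\tilde e$ as providing operators $\partial_R$ from $R$ to $KL$.

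The main technical step is to verify that for each $i=1,\dots,t$, the associated map $\sigma_i:=\pi_i^{KL}\circ\tilde e:R\to KL$ is injective. By construction $\sigma_i(k\otimes\ell)=\sigma_{i,K}(k)\cdot\sigma_{i,L}(\ell)$, where $\sigma_{i,K}$ and $\sigma_{i,L}$ are the associated endomorphisms of $K$ and $L$, which are automorphisms since those $\cD$-fields are inversive. Both restrict to the common automorphism $\sigma_{i,F}$ of $F$. Fix an $F$-basis $(u_\alpha)$ of $K$; then $(\sigma_{i,K}(u_\alpha))$ is another $F$-basis of $K$ (it is $F$-linearly independent because $\sigma_{i,F}$ is bijective on $F$, and it spans because $\sigma_{i,K}$ is surjective). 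An element of $R$ has a unique expression $\sum_\alpha u_\alpha\otimes\ell_\alpha$, and $\sigma_i$ sends it to $\sum_\alpha\sigma_{i,K}(u_\alpha)\sigma_{i,L}(\ell_\alpha)$ in $KL$. Linear disjointness then forces each $\sigma_{i,L}(\ell_\alpha)=0$, and since $\sigma_{i,L}$ is injective each $\ell_\alpha=0$. Thus $(R,KL,\partial_R)\in\cM$.

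Finally, Lemma~\ref{extendtofields} extends $\partial_R$ uniquely to operators on $KL=\operatorname{Frac}(R)$ so that $(KL,KL,\partial)\in\cM$; this is exactly a $\cD$-field structure on $KL$, and by construction it extends both $(K,\partial)$ and $(L,\gamma)$. For uniqueness: any $\cD$-field extension $e:KL\to\cD(KL)$ simultaneously extending $e_K$ and $e_L$ must satisfy $e(k\ell)=e(k)e(\ell)=e_K(k)e_L(\ell)=\tilde e(k\otimes\ell)$, so agrees with $\tilde e$ on $R$, and then by the uniqueness clause of Lemma~\ref{extendtofields} it is determined on all of $KL$. The only nontrivial point in the whole argument is the injectivity check just described, for which linear disjointness is used exactly once; everything else is formal from the structure of $\cM$ and the previously established lemmas.
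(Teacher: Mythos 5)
Your proof is correct and follows essentially the same route as the paper: define the structure map on $R=K\otimes_F L$ as the product of $e_K$ and $e_L$, check the condition on the associated endomorphisms using linear disjointness together with inversiveness, and then pass to the fraction field $KL$ via Lemma~\ref{extendtofields}. The only cosmetic difference is that the paper verifies that each $\pi_i^R\circ e$ is an automorphism of $R$ itself (via the well-defined map $a\otimes b\mapsto\sigma_i(a)\otimes\tau_i(b)$), whereas you verify injectivity of $\sigma_i:R\to KL$ by a basis argument; both land in the class to which Lemma~\ref{extendtofields} applies.
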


\begin{proof}
It follows from linear disjointedness that $R:=K\otimes_F L$ is an integral domain whose fraction field is the compositum $KL$.
Here we identify $K$ with $K\otimes 1\subset R$, $L$ with $1\otimes L\subset R$ and $F$ with $1\otimes F=F\otimes 1\subseteq K\cap L$.
It suffices to find a common extension of $\partial$ and $\gamma$ to $R$.
Indeed, by Lemma~\ref{extendtofields} we can then further extend $R$ uniquely to a $\cD$-field structure on the fraction field $KL$.

Let $e_1:K\to\cD(K)\subseteq\cD(R)$ and $e_2:L\to\cD(L)\subseteq\cD(R)$ be the corresponding $A$-algebra homomorphisms.
Since these agree on $F$ we have the induced map $e:R\to\cD(R)$ determined by $e(a\otimes b):=e_1(a)e_2(b)$, which is easily seen to be an $A$-algebra homomorphism that extends both $e_1$ and $e_2$.
For $i=0,\dots,t$,
\begin{eqnarray*}
\pi^R_ie(a\otimes b)
&=&\big(\pi^R_ie_1(a)\big)\big(\pi^R_ie_2(b))\\
&=&\big(\pi^{K}_ie_1(a)\big)\big(\pi^{L}_ie_2(b))\\
&=&(\sigma_i(a)\otimes 1\big)\big(1\otimes\tau_i(b)\big)\\
&=&\sigma_i(a)\otimes\tau_i(b)
\end{eqnarray*}
where the $\sigma_is$ and $\tau_i$'s are the associated automorphisms of $K$ and $L$ respectively.
Applying this to $i=0$ we see that $\pi^R\circ e=\pi_0^R\circ e$ is the identity on $R$; hence $(R,e)$ is a $\cD$-ring.
For $i\geq 1$, since $\sigma_i$ and $\tau_i$ extend an automorphism of $F$ (by the inversiveness assumption) and $K$ is linearly disjoint from $L$ over $F$, $a\otimes b\mapsto\sigma_i(a)\otimes\tau_i(b)$ determines an automorphism of $R=K\otimes_F L$.
Hence $\pi_i^R\circ e$ is an automorphism of $R$ for $i=1,\dots,t$.
So $(R,e)$ is in the class $\cK$,  and extends $(K,e_1)$ and $(L,e_2)$, as desired.
\end{proof}

\begin{proposition}
\label{overclosed}
If $(K,\partial)$ and $(L,\gamma)$ are models of $\ecdf$ with a common algebraically closed inversive $\cD$-subfield $F$, then $(K,\partial)\equiv_F(L,\gamma)$.
\end{proposition}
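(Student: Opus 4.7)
The plan is a standard amalgamation-plus-model-completeness argument. I would construct a common $\cD$-field extension of $(K,\partial)$ and $(L,\gamma)$ over $F$, existentially close it to a model of $\ecdf$, and then appeal to model completeness of the model companion to conclude that the two given models have the same type over $F$.

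In detail, I would first arrange isomorphic copies of $(K,\partial)$ and $(L,\gamma)$ inside a common large algebraically closed field $\Omega$ so that, inside $\Omega$, $K$ and $L$ are linearly disjoint over $F$. This is possible because $F$ is algebraically closed: I can take $\Omega$ of sufficiently large transcendence degree over $L$ to admit an embedding of $K$ over $F$ whose image is algebraically independent from $L$ over $F$, and then invoke the classical fact that algebraic independence over an algebraically closed field is equivalent to linear disjointness over it.

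Next I would invoke Lemma~\ref{amalgam}, which applies precisely because $F$ is an inversive $\cD$-subfield of both $(K,\partial)$ and $(L,\gamma)$, to produce a unique common $\cD$-field structure $\Delta$ on the compositum $KL \subseteq \Omega$ that simultaneously extends $\partial$ and $\gamma$. The resulting $(KL,\Delta)$ is a $\cD$-field of characteristic zero, so by Corollary~\ref{corecedomains} it embeds into some $(N,\Delta) \models \ecdf$. Since $\ecdf$ is the model companion of the theory of $\cD$-fields of characteristic zero, it is model complete; hence the inclusions $(K,\partial) \hookrightarrow (N,\Delta)$ and $(L,\gamma) \hookrightarrow (N,\Delta)$ are both elementary, and therefore $(K,\partial) \equiv_F (L,\gamma)$.

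The step requiring the most care is securing the hypotheses of Lemma~\ref{amalgam}: namely, the linear disjointness of $K$ and $L$ over $F$ inside a common overfield. This is where both assumptions on $F$ are used essentially---algebraic closedness to upgrade algebraic independence to linear disjointness, and inversiveness in order that Lemma~\ref{amalgam} produce a genuine common $\cD$-structure on the compositum. Everything else is a routine invocation of the model-companion machinery.
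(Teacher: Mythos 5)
Your proposal is correct and follows essentially the same route as the paper's proof: replace one of the models by an $F$-isomorphic copy so that $K$ and $L$ are linearly disjoint over the algebraically closed field $F$, amalgamate via Lemma~\ref{amalgam}, extend the resulting $\cD$-field structure on $KL$ to a model of $\ecdf$, and conclude by model completeness that both inclusions are elementary. The only difference is that you spell out the standard details (choice of the big field $\Omega$, equivalence of algebraic independence and linear disjointness over an algebraically closed base) that the paper leaves implicit.
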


\begin{proof}
The fact that $F$ is algebraically closed allows us to assume, after possibly replacing $(L,\gamma)$ by an $F$-isomorphic copy, that as subfields of some common field extension, $K$ and $L$ are linearly disjoint over $F$.
By Lemma~\ref{amalgam} we can extend $(K,\partial)$ and $(L,\gamma)$ simultaneously to a $\cD$-field structure on $KL$, which we can then extend further to a model, say $(K',\partial)\models\ecdf$.
By model completeness, $(K,\partial)\preceq(K',\partial)$ and $(L,\gamma)\preceq(K',\partial)$.
It follows that $(K,\partial)\equiv_F(L,\gamma)$.
\end{proof}

\begin{lemma}
\label{algsubstructure}
Suppose $(F,\partial)\subseteq(K,\partial)$ is a $\cD$-field extension such that $K$ is algebraically closed and $F$ is inversive.
Then $F^{\alg} \subseteq K$ is an inversive $\cD$-subfield.
\end{lemma}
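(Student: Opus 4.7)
The plan is to verify two things: first, that the operators $\partial_1,\dots,\partial_{\ell-1}$ on $K$ restrict to operators on $F^{\alg}$, that is, $e(F^{\alg})\subseteq\cD(F^{\alg})$; second, that each associated endomorphism $\sigma_i$ restricts to an automorphism of $F^{\alg}$. Using the decomposition $\cD=\prod_{i=0}^t\cD_i$ provided in $\S\ref{assocend-section}$, the first condition amounts to showing that $\theta_i^K\big(e(a)\big)\in\cD_i(F^{\alg})$ for every $a\in F^{\alg}$ and every $i$.

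The main work is in the first part. Given $a\in F^{\alg}$, let $P(x)\in F[x]$ be its minimal polynomial and fix $i\in\{0,\dots,t\}$. Then $\theta_i^K\big(e(a)\big)\in \cD_i(K)$ is a root of $P^{\theta_i^F\circ e|_F}(x)\in\cD_i(F)[x]$, whose reduction modulo the maximal ideal of $\cD_i(K)$ is $\sigma_i(a)\in K$. Because $F$ is inversive, $\sigma_i$ restricts to an automorphism of $F$, so $P^{\sigma_i|_F}(x)\in F[x]$; hence its root $\sigma_i(a)$ lies in $F^{\alg}$. I would then invoke Hensel's Lemma in the local ring $\cD_i(F^{\alg})$ (which is a finite, and thus henselian, local $F^{\alg}$-algebra with residue field $F^{\alg}$ by Assumption~\ref{assumptionA}(ii)) to lift $\sigma_i(a)$ uniquely to a root $b_i\in\cD_i(F^{\alg})$ of $P^{\theta_i^F\circ e|_F}(x)$; this uses separability of $P^{\sigma_i|_F}$, which is automatic in characteristic zero. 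By the uniqueness of Hensel lifts in the henselian local ring $\cD_i(K)$, $b_i$ must equal $\theta_i^K\big(e(a)\big)$, so the latter in fact lies in $\cD_i(F^{\alg})$. Assembling over $i$, $e(a)\in\prod_i\cD_i(F^{\alg})=\cD(F^{\alg})$, as desired.

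For the second part, each $\sigma_i|_{F^{\alg}}\colon F^{\alg}\to K$ is a field embedding extending the automorphism $\sigma_i|_F$ of $F$. Its image is an algebraic extension of $\sigma_i(F)=F$ that is itself algebraically closed, so $\sigma_i(F^{\alg})=F^{\alg}$ by the uniqueness of algebraic closure. Hence the restrictions of the associated endomorphisms to $F^{\alg}$ are automorphisms, i.e., $(F^{\alg},\partial)$ is inversive.

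The only genuinely delicate point is the first step, specifically the identification of $\theta_i^K(e(a))$ with the Hensel lift computed inside $\cD_i(F^{\alg})$. This is essentially the same etale‑lifting argument already used to prove Lemma~\ref{extendtoalg}, applied in reverse: rather than constructing $e$ on $F(a)$ from scratch, one uses uniqueness of the lift to trap the given value $\theta_i^K(e(a))$ inside the smaller ring $\cD_i(F^{\alg})$.
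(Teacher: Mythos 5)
Your proof is correct and follows essentially the same route as the paper's: reduce to showing $\theta_i^K(e(a))\in\cD_i(F^{\alg})$, note its residue is $\sigma_i(a)\in F^{\alg}$ (using inversiveness of $F$ to see $P^{\sigma_i}\in F[x]$), and use Hensel lifting plus uniqueness of the lift (the \'etale/separability point) to trap $\theta_i^K(e(a))$ inside $\cD_i(F^{\alg})$. The only difference is that you spell out the final inversiveness step (the image $\sigma_i(F^{\alg})$ is an algebraically closed algebraic extension of $F$, hence equals $F^{\alg}$), which the paper leaves as an implicit remark.
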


\begin{proof}
Inversiveness comes for free once we see that $F^{\alg}$ is a $\cD$-subfield.
Let $a\in F^{\alg}$ and $P(x)\in F[x]$ be the minimal poynomial of $a$.
Let $e:K\to\cD(K)$ be the $A$-algebra homomorphism corresponding to $\partial$.
We need to show that $e(a)\in\cD(F^{\alg})$.
Under the identification $\cD(K)=\prod_{i=0}^t\cD_i(K)$, we have $e(a)=\big(e_0(a),\dots,e_t(a)\big)$, where $e_i:=\theta_i^K\circ e$, and it suffices to
show that each $e_i(a)\in\cD_i(F^{\alg})$.
Now $\sigma_i(a)\in F^{\alg}$ and by the inversiveness assumption $P^{\sigma_i}(x)$ is the minimal polynomial of $\sigma_i(a)$ over $F$.
So by Hensel's Lemma $\sigma_i(a)$ has a lifting to a root of $P^{e_i}(x)$ in $\cD_i(F^{\alg})$.
On the other hand, $e_i(a)$ also lifts $\sigma_i(a)$ to a root of $P^{e_i}(x)$ in $\cD_i(K)$.
As the extension is \`{e}tale  these liftings agree, and so $e_i(a)\in\cD_i(F^{\alg})$, as desired.
\end{proof}

\begin{corollary}[Completions of $\ecdf$]
\label{completions}
The completions of $\ecdf$ are determined by the difference-field structure on the algebraic closure of the prime $\cD$-field.
That is,  two models $(K,\partial)$ and $(L,\gamma)$ of $\ecdf$ are elementarily equivalent if and only if $(A^{\alg},\sigma\upharpoonright_{A^{\alg}})\approx_A(A^{\alg},\tau\upharpoonright_{A^{\alg}})$, where $\sigma$ and $\tau$ are the sequences of automorphisms of $K$ and $L$ associated to $\partial$ and $\gamma$, respectively.
\end{corollary}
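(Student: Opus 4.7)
\emph{Plan.} The strategy is to reduce both directions to tools already developed in this section. For the backward direction, Proposition~\ref{overclosed} says that sharing a common algebraically closed inversive $\cD$-subfield implies elementary equivalence, and the uniqueness clause of Lemma~\ref{extendtoalg} says that the $\cD$-structure on $A^{\alg}$ is already determined by the action of the associated automorphisms. For the forward direction, elementary equivalence together with the fact that elements of $A^{\alg}$ are pinned down by their quantifier-free $\cL_{\cD}$-types over $A$ will suffice.

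\emph{Backward direction.} Suppose we are given an $A$-isomorphism of difference fields $f\colon (A^{\alg},\sigma\upharpoonright_{A^{\alg}})\to(A^{\alg},\tau\upharpoonright_{A^{\alg}})$. By Lemma~\ref{algsubstructure}, $A^{\alg}$ is an inversive $\cD$-subfield of both $(K,\partial)$ and $(L,\gamma)$, so its associated automorphisms in each case extend those of the prime $\cD$-field $A$. Apply the uniqueness clause of Lemma~\ref{extendtoalg} (with the base field $A$ and the algebraically closed extension $A^{\alg}$): any two $\cD$-structures on $A^{\alg}$ extending the prime one and sharing the same tuple of associated automorphisms must coincide. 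It follows that $f$ is automatically an isomorphism of $\cD$-fields, so identifying the two copies of $A^{\alg}$ via $f$ gives a common algebraically closed inversive $\cD$-subfield of $(K,\partial)$ and $(L,\gamma)$. Proposition~\ref{overclosed} then yields $(K,\partial)\equiv(L,\gamma)$.

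\emph{Forward direction.} Suppose $(K,\partial)\equiv(L,\gamma)$. Enumerate $A^{\alg}\subseteq K$ as $\bar a=(a_i)_{i<\omega}$, and let $q(\bar x)$ be its quantifier-free $\cL_\cD$-type over $A$. By elementary equivalence and compactness, $q$ is realised by a tuple $\bar b=(b_i)_{i<\omega}$ in some elementary extension $(L^*,\gamma^*)\succeq(L,\gamma)$. The minimal polynomial of each $a_i$ over $A$ lies in $q$, so each $b_i$ is algebraic over $A$ and hence sits in the algebraic closure of $A$ inside $L^*$, which equals $A^{\alg}\subseteq L$. The assignment $a_i\mapsto b_i$ is a well-defined $A$-embedding of $\cL_\cD$-structures into $A^{\alg}\subseteq L$; since its image is algebraically closed and contains $A$, it is all of $A^{\alg}\subseteq L$. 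Restricting to the $A$-linear combinations that realise the associated endomorphisms gives the desired $A$-isomorphism $(A^{\alg},\sigma\upharpoonright_{A^{\alg}})\to(A^{\alg},\tau\upharpoonright_{A^{\alg}})$.

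\emph{Main obstacle.} The only substantive ingredient is the backward direction's use of the uniqueness clause of Lemma~\ref{extendtoalg}: without it, matching the associated endomorphisms on $A^{\alg}$ would record only the difference-field part of the data, whereas we need the full $\cD$-structure to invoke Proposition~\ref{overclosed}. The forward direction is standard model-theoretic bookkeeping once one observes that algebraic elements are forced to remain in the algebraic closure.
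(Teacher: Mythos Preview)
Your proof is correct and follows essentially the same approach as the paper. The backward direction is identical: Lemma~\ref{algsubstructure} to get that $A^{\alg}$ is an inversive $\cD$-subfield, the uniqueness clause of Lemma~\ref{extendtoalg} to upgrade the difference-field isomorphism to a $\cD$-field isomorphism, and then Proposition~\ref{overclosed}. For the forward direction the paper simply invokes the standard fact that $(K,\partial)\equiv(L,\gamma)$ yields an elementary embedding of $(K,\partial)$ into an elementary extension of $(L,\gamma)$ and restricts this embedding to $A^{\alg}$; your compactness argument realising the quantifier-free type of an enumeration of $A^{\alg}$ is just an unpacking of the same idea, and your check that the image is all of $A^{\alg}\subseteq L$ is exactly what the paper leaves implicit.
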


\begin{proof}
First of all, both $(K,\partial)$ and $(L,\gamma)$ extend the prime $\cD$-field $A$, which is itself inversive (the difference-field structure on $A$ is trivial).
Hence, by Lemma~\ref{algsubstructure}, $(A^{\alg},\partial\upharpoonright_{A^{\alg}})$ and $(A^{\alg},\gamma\upharpoonright_{A^{\alg}})$ are inversive $\cD$-field extensions of $A$.
By Lemma~\ref{extendtoalg} their $\cD$-field structures are determined by the action of the corresponding automorphisms on $A^{\alg}$.
Hence, if $(A^{\alg},\sigma\upharpoonright_{A^{\alg}}))$ and $(A^{\alg},\tau\upharpoonright_{A^{\alg}}))$ are isomorphic then $(A^{\alg},\partial\upharpoonright_{A^{\alg}})$ and $(A^{\alg},\gamma\upharpoonright_{A^{\alg}})$ are isomorphic, and so by Proposition~\ref{overclosed}, $(K,\partial)$ and $(L,\gamma)$ are elementarily equivalent.
For the converse, if $(K,\partial)\equiv (L,\gamma)$ then there is an elementary embedding of $(K,\partial)$ into an elementary extension $(L',\gamma)$ of $(L,\gamma)$.
This elementary embedding will restrict to an isomorphism from $(A^{\alg},\partial\upharpoonright_{A^{\alg}})$ to its image in $(L',\gamma)$, which is $(A^{\alg},\gamma\upharpoonright_{A^{\alg}})$.
In particular, $(A^{\alg},\sigma\upharpoonright_{A^{\alg}})\approx_A(A^{\alg},\tau\upharpoonright_{A^{\alg}})$.
\end{proof}

\medskip
\subsection{Algebraic closure}
We characterise model-theoretic algebraic closure.

\begin{proposition}
\label{acl}
Suppose $(K,\partial)\models\ecdf$.
For all $B\subseteq K$, $\acl(B)=\langle B\rangle^{\alg}$.
\end{proposition}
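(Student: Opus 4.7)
The easy direction $\langle B\rangle^{\alg}\subseteq\acl(B)$ goes as follows. The operators $\partial_i$ are function symbols of $\cL_\cD$, the associated endomorphisms $\sigma_j$ are $A$-linear combinations of them, and since $(K,\partial)$ is inversive by Theorem~\ref{ecedomains} each $\sigma_j^{-1}$ is also $\emptyset$-definable. Hence the iterative description of $\langle B\rangle$ given in the remark following the definition of inversive closure shows $\langle B\rangle\subseteq\dcl(B)$; the field-theoretic algebraic closure of a definably closed set is contained in the model-theoretic algebraic closure, so $\langle B\rangle^{\alg}\subseteq\acl(B)$.

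For the reverse, set $F:=\langle B\rangle^{\alg}$; by Lemmas~\ref{extendtoalg} and~\ref{algsubstructure} this is an algebraically closed inversive $\cD$-subfield of $K$. Fix $a\in K\setminus F$. The key auxiliary fact is that $\tp(a/F)$ is determined by the $F$-$\cD$-isomorphism type of $F\langle a\rangle$: given $a'\in K$ and an $F$-$\cD$-isomorphism $F\langle a\rangle\to F\langle a'\rangle$ sending $a\mapsto a'$, Lemma~\ref{extendtoalg} extends it to an isomorphism of algebraic closures, which by Lemma~\ref{algsubstructure} are algebraically closed inversive $\cD$-subfields of $K$; Proposition~\ref{overclosed} then forces $\tp(a/F)=\tp(a'/F)$.

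It therefore suffices to produce an elementary extension of $K$ carrying infinitely many elements $F$-$\cD$-isomorphic to $a$. I build an elementary tower $K=K_0\preceq K_1\preceq\cdots$ of models of $\ecdf$ with elements $a'_n\in K_n$ inductively. Given $K_n$, take an abstract isomorphic copy $(L,a'_{n+1})\cong_F(F\langle a\rangle,a)$. Because $F$ is algebraically closed and of characteristic zero, $F\langle a\rangle/F$ is a regular field extension, so $L\otimes_F K_n$ is a domain; inside its fraction field $L$ and $K_n$ are linearly disjoint over $F$, both inversive, so Lemma~\ref{amalgam} puts a $\cD$-structure on their compositum. Embedding into a model of $\ecdf$ by Corollary~\ref{corecedomains} and invoking model completeness yields $K_n\preceq K_{n+1}$. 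Linear disjointness gives $L\cap K_n=F$, and $a'_{n+1}\notin F$ (else $a\in F$), so $a'_{n+1}\notin K_n$ and the $a'_n$ are pairwise distinct. By the auxiliary fact each $a'_n$ realizes $\tp(a/F)$ in $K^*:=\bigcup_n K_n$, whence $a\notin\acl(F)=\acl(B)$.

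The main care required is in the inductive step: ensuring that the fresh realization is both $F$-$\cD$-isomorphic to $a$ and genuinely outside the current model. Both are handled cleanly by the regularity of $F\langle a\rangle/F$ (making $L\otimes_F K_n$ a domain) and the linear-disjointness hypothesis of Lemma~\ref{amalgam}; everything else is a direct assembly of the extension, amalgamation, and completions results already established.
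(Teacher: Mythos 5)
Your overall strategy (produce infinitely many realisations of $\tp(a/F)$ by amalgamating linearly disjoint copies via Lemma~\ref{amalgam} and invoking model completeness) is exactly the paper's, but your ``auxiliary fact'' is false in general, and this is where the argument breaks. An $F$-isomorphism of $\cD$-fields $F\langle a\rangle\to F\langle a'\rangle$ with $a\mapsto a'$ does \emph{not} determine $\tp(a/F)$: one needs in addition that this isomorphism extends to an isomorphism of the algebraic closures $F\langle a\rangle^{\alg}\to F\langle a'\rangle^{\alg}$ \emph{equipped with their induced difference structures}. Lemma~\ref{extendtoalg} cannot supply this: it says that once a tuple of associated embeddings of $F\langle a'\rangle^{\alg}$ is prescribed there is a unique compatible $\cD$-structure, but it does not produce a difference-field isomorphism between the two algebraic closures sitting inside the model with the structures they actually carry there, and such an isomorphism may fail to exist. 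This is precisely the phenomenon behind the failure of quantifier elimination when there are nontrivial associated endomorphisms, and it is why Proposition~\ref{types}(iii) carries the extra extendability clause. Already in the special case $\operatorname{ACFA}$ ($\cD(R)=R\times R$) the claim fails: two transcendental fixed points $a,a'$ of $\sigma$ generate isomorphic difference fields over $F$ via $a\mapsto a'$, yet have different types if $\sigma$ fixes $\sqrt{a}$ but negates $\sqrt{a'}$. In your inductive step you only control the $\cD$-structure on the copy $L$ of $F\langle a\rangle$; after amalgamating with $K_n$ and embedding into a model $K_{n+1}$, the difference structure on $\acl(Fa'_{n+1})$ is whatever it happens to be, so nothing guarantees $a'_{n+1}\models\tp(a/F)$, and without that the construction proves nothing about $\acl(F)$.

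The repair is small but essential: copy not $F\langle a\rangle$ but $\acl(Fa)=F\langle a\rangle^{\alg}$ with its induced structure (which is an inversive $\cD$-subfield by Lemma~\ref{algsubstructure}); since $F$ is algebraically closed the tensor product with $K_n$ is still a domain, Lemma~\ref{amalgam} applies, and now the copy of $\acl(Fa)$ \emph{is} $\acl(Fa'_{n+1})$ in $K_{n+1}$, so Proposition~\ref{overclosed} (or Proposition~\ref{types}(iii)$\Rightarrow$(i)) does give $\tp(a'_{n+1}/F)=\tp(a/F)$. The paper sidesteps the issue even more bluntly: it takes a linearly disjoint copy $\alpha:K\to K'$ of the \emph{whole model} over $F$, amalgamates, and embeds the compositum in a model of $\ecdf$; by model completeness both $K$ and $K'$ sit elementarily in it, so $\alpha(a)$ automatically realises $\tp(a/F)$ and is distinct from $a$ by linear disjointness, and one iterates. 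Also note, as a minor point, that $F\langle a\rangle=F[\Theta a]$ is a priori only a domain, so to feed it to Lemma~\ref{amalgam} you should pass to its fraction field (Lemma~\ref{extendtofields}); this is harmless but should be said.
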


\begin{proof}
Recall that $\langle B\rangle$ is the inversive closure of $B$, the smallest inversive $\cD$-subring of $K$ containing $B$.
As $\sigma_1,\dots,\sigma_t$ are $\cL_\cD$-definable, $\langle B\rangle\subseteq\dcl(B)$.
Hence $F:=\langle B\rangle^{\alg}\subseteq\acl(B)$.
It remains to show that if $a\in K\setminus F$ then $\tp(a/F)$ is nonalgebraic.

Note that, by Lemma~\ref{algsubstructure}, $F$ is an inversive $\cD$-subfield of $K$.
Since $F$ is algebraically closed we can find, in some common field extension, an isomorphic copy of $K$ over $F$, witnessed say by an $F$-isomorphism $\alpha:K\to K'$, and such that $K$ is linearly disjoint from $K'$ over~$F$.
Via $\alpha$ we can put a $\cD$-field structure $\partial'$ on $K'$ that extends $(F,\partial)$ and so that $\alpha$ is an isomorphism of $\cD$-fields.
Now we extend $(K,\partial)$ and $(K',\partial')$ to a $\cD$-field structure on $KK'$ using Lemma~\ref{amalgam}, and then further to a model of $\ecdf$.
We have thus found a common elementary extension of $(K,\partial)$ and $(K',\partial')$.
In this elementary extension, $\alpha(a)$ will be a realisation of $\tp(a/F)$ that is distinct from~$a$.
Iterating, we find infinitely many realisations of $\tp(a/F)$ in some elementary extension, proving that this type is nonalgebraic.
\end{proof}

\medskip
\subsection{Types}
\label{subsection-types}
We characterise types and deduce a quantifier reduction theorem.
Recall that $\Theta a$  is the (infinite) tuple whose co-ordinates are of the form $\theta a_i$ as $\theta$ ranges over all finite words on the set $\{\partial_1,\dots,\partial_{\ell-1},\sigma_1^{-1},\dots,\sigma_t^{-1}\}$.

\begin{proposition}
\label{types}
Suppose $(K,\partial)\models\ecdf$, $k\subseteq K$ is an inversive $\cD$-subfield, and $a,b\in K^n$.
Then the following are equivalent:
\begin{itemize}
\item[(i)]
$\tp(a/k)=\tp(b/k)$,
\item[(ii)]
$\tp_{\sigma}\big(\Theta a/k\big)=\tp_{\sigma}\big(\Theta b/k\big)$ (where $\tp_{\sigma}(c/k)$ denotes the
type of $c$ over $k$ in the reduct to the language of difference fields),
\item[(iii)]
there is an isomorphism from $\big(k\langle a\rangle,\partial\big)$ to $\big(k\langle b\rangle,\partial\big)$ sending $a$ to $b$ and fixing~$k$ that extends to an isomorphism from $\big(k\langle a\rangle^{\alg},\sigma\big)$ to $\big(k\langle b\rangle^{\alg},\sigma\big)$.
\end{itemize}
\end{proposition}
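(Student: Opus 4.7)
The plan is to prove the cycle $(i)\Rightarrow(ii)\Rightarrow(iii)\Rightarrow(i)$.

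For $(i)\Rightarrow(ii)$ I would just observe that each coordinate of $\Theta a$ is $\cL_\cD$-definable from $a$ over $k$: the $\partial_i$ are basic symbols, each associated automorphism $\sigma_j$ is an $A$-linear combination of the $\partial_i$, and $\sigma_j^{-1}$ is definable because $K$ is inversive by Theorem~\ref{ecedomains}. Thus equality of $\tp(a/k)$ and $\tp(b/k)$ forces equality of the full $\cL_\cD$-types of $\Theta a$ and $\Theta b$, and \emph{a fortiori} equality of their reducts $\tp_\sigma(\Theta a/k)$ and $\tp_\sigma(\Theta b/k)$.

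The core step is $(ii)\Rightarrow(iii)$. Here I would lean on the identification $k\langle a\rangle=k[\Theta a]$ already noted in the paper: the ring $k[\Theta a]$ is stable under each $\partial_i$ (which sends the $\theta$-coordinate of $\Theta a$ to the $(\partial_i\theta)$-coordinate) and under each $\sigma_j^{\pm 1}$ (using that $\sigma_j$ is an $A$-linear combination of the $\partial_i$, that $\sigma_j^{-1}\theta$ is again a word appearing in $\Theta$, and that $k$ is inversive), and it contains both $k$ and $a$, so it is the smallest inversive $\cD$-subring generated by these. By Proposition~\ref{assocacfa} the associated difference field $(K,\sigma)$ is a model of $\operatorname{ACFA}_{0,t}$, and the standard characterization of types in that theory over an inversive substructure converts the hypothesis $(ii)$ into a difference-field isomorphism $\alpha\colon k\langle a\rangle^{\alg}\to k\langle b\rangle^{\alg}$ over $k$ that matches the $\theta$-coordinate of $\Theta a$ with the $\theta$-coordinate of $\Theta b$ for every word $\theta$. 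The restriction of $\alpha$ to $k[\Theta a]=k\langle a\rangle$ is then automatically a $\cD$-ring isomorphism onto $k\langle b\rangle$: on the generating coordinates, $\partial_i$ acts by the purely combinatorial shift $\theta\mapsto\partial_i\theta$, which $\alpha$ respects by construction, so $\alpha\circ\partial_i=\partial_i\circ\alpha$ holds on generators and hence on all of $k[\Theta a]$ by the ring and $\cD$-ring axioms.

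For $(iii)\Rightarrow(i)$, the given $\alpha$ is in fact a $\cD$-field isomorphism of the full algebraic closures: by the uniqueness clause of Lemma~\ref{extendtoalg}, the $\cD$-structure on $k\langle a\rangle^{\alg}$ is the unique extension of the one on $k\langle a\rangle$ compatible with the given extension of the associated automorphisms, and $\alpha$ respects both pieces of data. By Lemma~\ref{algsubstructure}, $F:=k\langle a\rangle^{\alg}$ and $F':=k\langle b\rangle^{\alg}$ are algebraically closed inversive $\cD$-subfields of $K$. Taking two copies $K_1,K_2$ of $(K,\partial)$ and using $\alpha$ to identify $F\subseteq K_1$ with $F'\subseteq K_2$ (so that $a$ and $b$ become the same element of the common subfield), Proposition~\ref{overclosed} gives $K_1\equiv_F K_2$, which translates back to $\tp(a/k)=\tp(b/k)$ in $K$.

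The main obstacle is $(ii)\Rightarrow(iii)$: upgrading equality of bare difference-field types to a genuine $\cD$-field isomorphism. The trick that makes this cheap is the identification $k\langle a\rangle=k[\Theta a]$, which reduces commutation of $\alpha$ with each $\partial_i$ to a combinatorial coordinate-matching statement that any map respecting the $\Theta$-indexing automatically satisfies.
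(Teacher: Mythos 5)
Your proposal is correct and follows essentially the same route as the paper: (i)$\Rightarrow$(ii) by $0$-definability of the coordinates of $\Theta$, (ii)$\Rightarrow$(iii) by producing a difference-field isomorphism $k\langle a\rangle^{\alg}\to k\langle b\rangle^{\alg}$ over $k$ matching $\Theta$-coordinates and observing that its restriction to $k[\Theta a]=k\langle a\rangle$ is automatically a $\cD$-isomorphism (the propagation from generators using the generalised Leibniz rules is exactly what the paper leaves implicit), and (iii)$\Rightarrow$(i) via the uniqueness clause of Lemma~\ref{extendtoalg} together with Lemma~\ref{algsubstructure} and Proposition~\ref{overclosed}. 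The only cosmetic difference is that you source the difference isomorphism from the characterisation of types in $\operatorname{ACFA}_{0,t}$ (via Proposition~\ref{assocacfa}), whereas the paper gets it directly as the restriction of an automorphism of a saturated elementary extension of the difference reduct --- the same fact, proved the same way.
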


\begin{proof}
(i)$\implies$(ii) is clear.

(ii)$\implies$(iii).
Work in a sufficiently saturated elementary extension $(L,\partial)$ of $(K,\partial)$.
Then $(L,\sigma)$ is also saturated as a difference-field, and so $\tp_{\sigma}\big(\Theta a/k\big)=\tp_{\sigma}\big(\Theta b/k\big)$ is witnessed by a difference-field automorphism $\alpha$ of $L$ over $k$, taking $\Theta a$ to $\Theta b$.
Then $\beta:=\alpha\upharpoonright_{k[\Theta a]}$ is the desired $\cD$-field isomorphism from $k\langle a\rangle=k[\Theta a]$ to $k\langle b\rangle=k[\Theta b]$,
and $\alpha\upharpoonright_{k\langle a\rangle^{\alg}}$ is the desired extension.

(iii)$\implies$(i).
First note that the difference-field isomorphism, $\alpha$, from $k\langle a\rangle^{\alg}$ to $k\langle b\rangle^{\alg}$ will necessarily be a $\cD$-field isomorphism.
Indeed, $\alpha$ will take $\partial\upharpoonright_{k\langle a\rangle^{\alg}}$ to a $\cD$-field structure on $k\langle b\rangle^{\alg}$ whose associated endomorphism is $\sigma\upharpoonright_{k\langle a\rangle^{\alg}}$.
But by the uniqueness part of Lemma~\ref{extendtoalg}, this new $\cD$-structure must co-incide with $\partial\upharpoonright_{k\langle b\rangle^{\alg}}$.
The equality of types is now an immediate consequence of~\ref{overclosed} and~\ref{algsubstructure}.
\end{proof}

The equivalence of parts~(i) and~(iii) above yields the following corollary:

\begin{corollary}[Quantifier Reduction]
Every $L$-formula $\phi(x_1,\dots,x_n)$ is equivalent modulo $\ecdf$ to an $L$-formula of the form $\exists y \ \psi(x_1,\dots,x_n,y)$ where
\begin{itemize}
\item
$\psi(x_1,\dots,x_n,y)=\xi(\bar x,\bar y)$ where $\xi$ is a quantifier-free ring formula, the co-ordinates of $\bar x$ are of the form $\theta x_i$ where $\theta \in \Theta$ and $\bar{y} = (y,\sigma_1(y),\ldots,\sigma_t(y))$,
\item
each disjunct of $\xi$ written in disjunctive normal form includes a conjunct of the
form $t_N(\bar x) \neq 0 ~\&~ \sum_{j=0}^N t_j(\bar x) y^j = 0$ where each $t_i$ is a polynomial.
\end{itemize}
In particular, when the associated endomorphisms are all trivial the existential quantifier may be omitted and we have quantifier elimination.
\end{corollary}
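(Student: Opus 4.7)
The plan is to derive this quantifier reduction from Proposition~\ref{types}, specifically from the equivalence of conditions (i) and (iii). It suffices to prove the semantic version: if $(K,\partial),(L,\gamma)\models\ecdf$ both extend a common inversive $\cD$-subfield $k$ of parameters, and $a\in K^n$, $b\in L^n$ satisfy the same $\cL_\cD$-formulas of the prescribed form over $k$, then $\tp(a/k)=\tp(b/k)$; a standard compactness/Stone-duality argument then converts this into the claimed syntactic reduction.

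The $\bar x$-part of our formulas---quantifier-free ring formulas in the coordinates $\theta x$ for $\theta\in\Theta$---encodes the quantifier-free ring type of $\Theta a$ over $k$. Since $k\langle a\rangle=k[\Theta a]$, matching these across $a$ and $b$ furnishes a $\cD$-field isomorphism $\beta\colon k\langle a\rangle\to k\langle b\rangle$ fixing $k$ and taking $a$ to $b$, which establishes the first half of condition~(iii) of Proposition~\ref{types}.

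The $\exists y$-piece is then designed to force $\beta$ to extend to the algebraic closures as a difference-field isomorphism. In characteristic zero, any finite subextension of $k\langle a\rangle^{\alg}/k\langle a\rangle$ is generated by a primitive element $y$ whose minimal polynomial has the form $\sum_{j=0}^N t_j(\bar x)y^j$ with $t_N(\bar x)\neq 0$; moreover, each $\sigma_i(y)$ lies in the same finite extension and so equals a polynomial in $y$ with coefficients in $k[\Theta a]$. Consequently, the quantifier-free ring type of $(\Theta a,\, y,\, \sigma_1(y),\dots,\sigma_t(y))$ over $k$ records both the minimal polynomial of $y$ and the polynomials expressing each $\sigma_i(y)$ in $y$, which together pin down the difference-field structure on $k\langle a\rangle[y]$. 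Running a back-and-forth over finite subextensions, using matching satisfaction of all such existential formulas, extends $\beta$ to a difference-field isomorphism $k\langle a\rangle^{\alg}\to k\langle b\rangle^{\alg}$, completing~(iii). The ``in particular'' clause follows directly from Lemma~\ref{extendtoalg}: when $t=0$, the $\cD$-field structure on $k\langle a\rangle^{\alg}$ is uniquely determined by that on $k\langle a\rangle$, so no witness $y$ is needed and we have quantifier elimination outright.

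The main obstacle I foresee is syntactic bookkeeping: arranging the disjunctive normal form so that every disjunct actually contains a clause of the shape $t_N(\bar x)\neq 0 \ \&\ \sum_j t_j(\bar x)y^j=0$. Disjuncts that would be ``$y$-free''---encoding data purely about $k\langle a\rangle$---must be absorbed by trivially satisfiable polynomial clauses, for instance by choosing $y$ to be some auxiliary algebraic element already realised in $k\langle a\rangle^{\alg}$, while genuine algebraic-closure disjuncts are already in the required shape. Beyond this manipulation, nothing more than Proposition~\ref{types} and the primitive element theorem should be required.
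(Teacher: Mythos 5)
Your overall route is the paper's: the corollary is deduced from the equivalence of (i) and (iii) in Proposition~\ref{types} together with a compactness argument, and your treatment of the $\bar x$-part (recovering a $\cD$-field isomorphism $k\langle a\rangle\to k\langle b\rangle$ from the quantifier-free ring type of $\Theta a$) and of the ``in particular'' clause via the uniqueness in Lemma~\ref{extendtoalg} is right. But there is one concretely false step in the middle: for a primitive element $y$ of a finite subextension of $k\langle a\rangle^{\alg}/k\langle a\rangle$, it is \emph{not} true in general that $\sigma_i(y)$ lies in $k\langle a\rangle(y)$, let alone that it is a polynomial in $y$ over $k[\Theta a]$; the automorphisms $\sigma_i$ permute the finite subextensions and only the full algebraic closure is $\sigma$-stable (Lemma~\ref{algsubstructure}). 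Consequently ``the difference-field structure on $k\langle a\rangle[y]$'' does not exist, and the naive back-and-forth over finite subextensions, as you describe it, does not literally get off the ground: partial maps on finite pieces are not difference-field maps, and maps chosen at different finite levels need not cohere.

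The repair stays within your strategy and uses exactly the data the prescribed formulas carry. For a finite subextension $F=k\langle a\rangle(y)$, record instead the \emph{joint} ideal of the tuple $(y,\sigma_1(y),\dots,\sigma_t(y))$ over $k\langle a\rangle$ (a maximal ideal, finitely generated; clearing denominators gives a formula of the required shape, whose mandatory conjunct is the minimal polynomial of $y$). If $b$ satisfies this formula, the witness $y'$ realises the $\beta$-image of that maximal ideal exactly, so $\beta\cup\{y\mapsto y',\,\sigma_i(y)\mapsto\sigma_i(y')\}$ extends to a field embedding; using that $\beta$ commutes with the $\sigma_i$ on $k\langle a\rangle$, any extension of this embedding to all of $k\langle a\rangle^{\alg}$ commutes with each $\sigma_i$ on $F$. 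Now replace the back-and-forth by a compactness argument on the profinite space of field embeddings $k\langle a\rangle^{\alg}\to k\langle b\rangle^{\alg}$ extending $\beta$: commuting with $\sigma_i$ on a given finite subextension is a closed condition, the matched formulas show all finite intersections are nonempty, and a point of the total intersection is the desired difference-field isomorphism, yielding (iii). Finally, your ``syntactic bookkeeping'' paragraph understates what remains: besides absorbing $y$-free disjuncts, to pass from ``every formula is equivalent to a boolean combination of restricted formulas'' to a \emph{single} $\exists y\,\xi$ you must check closure of the restricted class under conjunction (combine two algebraic witnesses into one primitive element, re-express them and their $\sigma_i$-images, and use compactness to replace the resulting infinite disjunction by a finite one); disjunctions are immediate. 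With these two corrections the argument goes through.
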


\medskip
\subsection{Independence and simplicity}

\noindent
In this section we observe that  $\ecdf$ is simple, and we give an algebraic characterisation of nonforking independence.
The results here follow more or less axiomatically from the results of the previous sections, as established by Chatzidakis and Hrushovski in~\cite{acfa1}.

Let $(\mathbb U,\partial)$ be a sufficiently saturated model of $\ecdf$.

\begin{definition}
Suppose $A, B, C$ are (small) subsets of $\mathbb U$.
Then {\em $A$ is independent from $B$ over $C$}, denoted by $A\ind_CB$, if $\acl(A\cup C)$ is algebraically independent (equivalently linearly disjoint) from $\acl(B\cup C)$ over $\acl(C)$.
\end{definition}

\begin{theorem}
Independence in $(\mathbb U,\partial)$ satisfies the following properties:
\begin{itemize}
\item[(a)]
Symmetry.
$A\ind_CB$ implies $B\ind_CA$.
\item[(b)]
Transitivity.
Given $A\subseteq B\subseteq C$ and tuple $a$,
\begin{center}
$a\ind_AC$ if and only if $a\ind_BC$ and $a\ind_AB$.
\end{center}
\item[(c)]
Invariance.
If $\alpha\in\aut(\mathbb U,\partial)$ then $A\ind_CB$ implies $\alpha(A)\ind_{\alpha(C)}\alpha(B)$.
\item[(d)]
Finite character.
$A\ind_CB$ if and only if $A\ind_CB_0$ for all finite $B_0\subset B$.
\item[(e)]
Local character.
Given a set $B$ and a tuple $a$, there exists countable $B_0\subset B$ such that $a\ind_{B_0}B$.
\item[(f)]
Extension.
Given $A\subseteq B$ and tuple $a$, there exists a tuple $a'$ such that $\tp(a/A)=\tp(a'/A)$ and $a'\ind_AB$.
\item[(g)]
Independence theorem.
Suppose
\begin{itemize}
\item
$F$ is an algebraically closed inversive $\cD$-field,
\item
$A$ and $B$ are supersets of $F$ with $A\ind_FB$,
\item
$a\ind_FA$ and $b\ind_FB$
\item
$\tp(a/F)=\tp(b/F)$.
\end{itemize}
Then there is $d\ind_FAB$ with $\tp(d/A)=\tp(a/A)$ and $\tp(d/B)=\tp(b/B)$.
\end{itemize}
In particular, $\th(\mathbb U,\partial)$ is simple and $\ind$ is nonforking independence.
\end{theorem}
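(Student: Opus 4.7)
The plan is to verify properties (a)--(g) directly from the algebraic description of independence together with the amalgamation lemma (Lemma~\ref{amalgam}) and the characterisation of types (Proposition~\ref{types}) and of algebraic closure (Proposition~\ref{acl}), and then to invoke the Kim--Pillay theorem to conclude that the theory is simple and that $\ind$ coincides with nonforking independence. The strategy is modelled verbatim on Chatzidakis--Hrushovski's treatment of $\operatorname{ACFA}$ in \cite{acfa1}, with Proposition~\ref{acl} playing the role of the analogous description of algebraic closure in difference fields.

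Properties (a)--(e) are essentially inherited from the corresponding properties of linear disjointness in $\operatorname{ACF}$. Since $\acl$ in $\ecdf$ equals $\langle\cdot\rangle^{\alg}$, and the definition of $\ind$ is phrased in terms of linear disjointness of these algebraic closures over $\acl(C)$, symmetry, transitivity, invariance, finite character, and local character follow from the fact that transcendence degree is an additive dimension function on algebraically closed fields. The only subtlety is that one must check that $\langle AC\rangle^{\alg}\ind^{\operatorname{ACF}}_{\langle C\rangle^{\alg}}\langle BC\rangle^{\alg}$ is preserved under the closure operations, which is immediate since these closures are built inside a fixed ambient field.

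For extension (f), given $A\subseteq B$ and a tuple $a$, let $F:=\acl(A)$, which by Lemma~\ref{algsubstructure} is an inversive $\cD$-subfield. Replace the ambient universe by a copy $(K',\partial')$ realising $\tp(a/F)$, and exploit the fact that $F$ is algebraically closed to arrange $K'$ linearly disjoint from $\acl(B)$ over $F$. Lemma~\ref{amalgam} then produces a common $\cD$-field structure on the compositum, which we extend to a model of $\ecdf$. The image $a'$ of $a$ in this model is independent from $B$ over $A$ by construction, and realises $\tp(a/A)$ by Proposition~\ref{overclosed}.

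The main obstacle is the independence theorem (g), which is the three-amalgamation step. Reducing to the case that $A,B$ are themselves algebraically closed and inversive, use Proposition~\ref{types} to obtain a $\cD$-field isomorphism $\alpha:F\langle a\rangle^{\alg}\to F\langle b\rangle^{\alg}$ fixing $F$, and identify these fields via $\alpha$ so that $a=b$. Now the hypotheses $a\ind_FA$, $b\ind_FB$, and $A\ind_FB$ translate to a three-way linear disjointness system of algebraically closed fields over $F$; since $F$ is algebraically closed, the compositum inside a large ambient field is well-defined and preserves algebraic closedness of the pairwise amalgams. Apply Lemma~\ref{amalgam} repeatedly to put a compatible $\cD$-field structure on the compositum of $\acl(Aa)$ and $\acl(Bb)$, extend to a model of $\ecdf$, and use Proposition~\ref{types} to verify that the resulting realisation $d$ has type $\tp(a/A)$ over $A$ and type $\tp(b/B)$ over $B$; the computation that $d\ind_FAB$ is then a dimension count in $\operatorname{ACF}$. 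With (a)--(g) in hand, the Kim--Pillay theorem implies that $\th(\mathbb U,\partial)$ is simple and that $\ind$ is nonforking independence.
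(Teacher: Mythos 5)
Your parts (a)--(f) are essentially the paper's own argument (for local character (e) you should also record the fact that $K\langle a\rangle$ is countably generated as a field over an inversive $K$, which is what makes a countable $B_0$ suffice), and your proof of (f) via a linearly disjoint copy of $\acl(A)\langle a\rangle^{\alg}$, Lemma~\ref{amalgam}, and Propositions~\ref{overclosed} and~\ref{types} is the one in the paper. The genuine gap is in (g). Identifying $a$ with $b$ via Proposition~\ref{types} and then ``applying Lemma~\ref{amalgam} repeatedly'' to the structures on $\acl(Aa)$ and $\acl(Bb)$ does not suffice: the element $d$ you must produce lives in the given universe, where $A$ and $B$ already carry a joint $\cD$-field structure on $\acl(AB)$. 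So you are really amalgamating \emph{three} structures --- those on $\acl(Ad)$, on $\acl(Bd)$, and the pre-existing one on $\acl(AB)$ --- over the compositum $\acl(A)\cdot\acl(B)$, and Lemma~\ref{amalgam} demands linear disjointness over that compositum (which is not algebraically closed). This disjointness is not implied by the hypotheses: $\{A,B,a\}$ is only \emph{pairwise} independent over $F$, so there is no ``three-way linear disjointness system'' to start from; producing one is precisely the content of the independence theorem. Your claim that composita of the pairwise amalgams remain algebraically closed is also false, though not the essential point.

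The paper closes this gap as follows: fix a fresh realisation $c\models\tp(a/F)$, choose conjugates $A',B'$ with $A'c\equiv_F Aa$, $B'c\equiv_F Bb$ and $\{A',B',c\}$ independent over $F$, transplant the $\cD$-structure of $\acl(AB)$ onto $K_2:=\acl(A'B')$ via the tensor-product isomorphism coming from $A\ind_F B$ and $A'\ind_F B'$, and then amalgamate $(K_1,\partial)$, where $K_1:=\acl(A'c)\cdot\acl(B'c)$, with $(K_2,\gamma)$ over $K_0:=\acl(A')\cdot\acl(B')$. The hypothesis of Lemma~\ref{amalgam} at this step is supplied by a nontrivial field-theoretic lemma of Chatzidakis and Hrushovski~\cite{acfa1}: if $A,B,C$ are algebraically closed extensions of an algebraically closed field $F$ with $C$ algebraically independent from $AB$ over $F$, then $(AC)^{\alg}(BC)^{\alg}$ is linearly disjoint from $(AB)^{\alg}$ over $A\cdot B$. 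Some such input (or an equivalent argument) is indispensable; without it the amalgamation step in your sketch, and hence the verification that $\tp(d/A)=\tp(a/A)$ and $\tp(d/B)=\tp(b/B)$ simultaneously with $d\ind_F AB$, is unjustified. Once (g) is correctly established over algebraically closed inversive $\cD$-subfields, your appeal to the Kim--Pillay criterion to get simplicity and the identification of $\ind$ with nonforking is fine and is what the paper implicitly does.
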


\begin{proof}
(a) through (e) follow easily from the corresponding properties for algebraic independence; part~(e) using also the fact that if $K$ is an inversive  $\cD$-field then $K\langle a\rangle$ is countably generated as a field over $K$.

(f).
Let $F=\acl(A)$, $K=\acl(B)$, and $K_1:=F\langle a\rangle^{\alg}$.
Let $K_1'$ be a field-isomorphic copy of $K_1$ over $F$ -- say with $\alpha:K_1\to K_1'$ witnessing this --  such that $K_1'$ is linearly disjoint from $K$ over $F$.
We can put a $\cD$-field structure $\partial'$ on $K_1'$ extending $(F,\partial)$ such that $\alpha$ is a $\cD$-field isomorphism.
Now by Lemma~\ref{amalgam} we can find a model of $\ecdf$ extending both $(K_1',\partial')$ and $(K,\partial)$.
By Proposition~\ref{overclosed} and saturation we may assume this model is an elementary substructure of $(\mathbb U,\partial)$.
Hence $\tp(\alpha(a)/F)=\tp(a/F)$ by the equivalence of~(i) and~(iii) in Proposition~\ref{types}, and $\alpha(a)\ind_AB$ by linear disjointedness.

(g).
We follow the spirit of the argument used for $ACFA$ in~\cite{acfa1}.
Fix $c\models p(x):=\tp(a/F)=\tp(b/F)$.
It suffices to find $A',B'$ such that
\begin{itemize}
\item[(i)]
$\{A',B',c\}$ is independent over $F$,
\item[(ii)]
$A'c\models \tp(Aa/F)$,
\item[(iii)]
$B'c\models \tp(Bb/F)$, and
\item[(iv)]
$A'B'\models \tp(AB/F)$.
\end{itemize}
Indeed, if $\alpha\in\aut_F(\mathbb U,\partial)$ with $\alpha(A'B')=AB$, then $d:=\alpha(c)$ will witness the conclusion.

Since $\tp(c/F)=\tp(a/F)=\tp(b/F)$, there exists $A'B'$ satisfying~(ii) and~(iii).
Moreover, by extension, we may also assume that $A'\ind_{Fc}B'$.
Hence, by transitivity, we have~(i) as well.
The only thing missing is~(iv).

Let $K_0:=\acl(A')\cdot\acl(B')$, and $K_1:=\acl(A'c)\cdot\acl(B'c)$, and $K_2:=\acl(A'B')$.
So $K_1$ and $K_2$ are field extensions of $K_0$.
We wish to give $K_2$ a $\cD$-field structure $\gamma$ such that
\begin{equation}
\label{iv}
(K_2,\gamma) \approx_F (\acl(AB),\partial\upharpoonright_{\acl(AB)}).
\end{equation}
To do so, denote by $\alpha$ and $\beta$ the $F$-automorphisms of the universe taking $A$ to $A'$ and $B$ to $B'$, respectively.
Then since $A\ind_FB$ and $A'\ind_FB'$, $\alpha\upharpoonright_{\acl(A)}\otimes\beta\upharpoonright_{\acl(B)}$ induces an isomorphism over $F$ between the fields $\acl(A)\cdot\acl(B)$ and $\acl(A')\cdot\acl(B')$, and hence between their field-theoretic algebraic closures $\big(\acl(A)\cdot\acl(B)\big)^{\alg}=\acl(AB)$ and $\big(\acl(A')\cdot\acl(B')\big)^{\alg}=\acl(A'B')=K_2$.
We use this field isomorphism to define the desired $\gamma$ on $K_2$ such that~(\ref{iv}) holds.

Since $\alpha\upharpoonright_{\acl(A)}$ and $\beta\upharpoonright_{\acl(B)}$ are $\cD$-field ismorphisms, we have that $\gamma$ agrees with $\partial$ on each of $\acl(A')$ and $\acl(B')$.
Hence $\gamma$ must agree with $\partial$ on the composite $K_0$.
That is, $(K_1,\partial\upharpoonright_{K_1})$ and $(K_2,\gamma)$ are $\cD$-field extensions of $(K_0,\partial\upharpoonright_{K_0})$.
If we can find a common extension $\tau$ of $\partial\upharpoonright_{K_1}$ and $\gamma$ to the composite $K_1\cdot K_2$, then we could extend $(K_1\cdot K_2,\tau)$ to a model of $\ecdf$ which will be elementarily embeddable in $(\mathbb U,\partial)$ over $F$ by Proposition~\ref{overclosed}.
We will thus have achieved~(iv) because of~(\ref{iv}), without ruining~(i) through~(iii), thereby proving the independence theorem.

To find such an extension, by Lemma~\ref{amalgam}, it suffices to show that $K_1$ and $K_2$ are linearly disjoint over $K_0$.
This follows from the following field-theoretic fact proved by Chatzidakis and Hrushovski in~\cite{acfa1}:
{\em If $A,B,C$ are algebraically closed fields extending an algebraically closed field $F$, with $C$ algebraically independent from $AB$ over $F$, then $(AC)^{\alg}(BC)^{\alg}$ is linearly disjoint from $(AB)^{\alg}$ over $AB$.}\footnote{See Remark~2 following the proof of the Generalised Independence Theorem in~\cite{acfa1}.}
\end{proof}

\begin{definition}[Dimension]
\label{dimtheta}
Suppose $a$ is a tuple and $k$ is an algebraically closed inversive $\cD$-subfield.
We let $\dim_{\cD}(a/k):=(\trdeg(\Theta_r(a)/k):r<\omega)$ where
$\Theta_r(a):=\big(\theta a:\theta\text{ a word of length $\leq r$ on }\{\partial_1,\dots,\partial_\ell,\sigma_1^{-1},\dots,\sigma_t^{-1}\} \big)$.
We view $\dim_{\cD}(a/k)$ as an element of $\omega^{\omega}$ equipped with the lexicographic ordering.
\end{definition}

Note that this dimension is not preserved under interdefinability, and that a more robust notion would depend only on the the eventual growth of the sequence of transcendence degrees.
This dimension should  be regarded as an analogue of the Kolchin function in differential algebra.   In some sense, it is too fine, but it will measure nonforking.

\begin{lemma}
\label{dimfork}
Suppose $a$ is a tuple and $k\subseteq L$ are algebraically closed inversive $\cD$-subfields.
Then $a\ind_kL$ if and only if $\dim_{\cD}(a/L)=\dim_{\cD}(a/k)$.
\end{lemma}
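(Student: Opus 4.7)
The plan is to reduce both sides of the biconditional to the equality of transcendence degrees $\trdeg(\Theta_r(a)/k) = \trdeg(\Theta_r(a)/L)$ for each $r < \omega$, using Proposition~\ref{acl} together with standard facts about linear disjointness of fields in characteristic zero.

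First I would unpack the notion of independence. By Proposition~\ref{acl}, $\acl(ka) = k\langle a\rangle^{\alg} = k(\Theta a)^{\alg}$, and since $k$ and $L$ are already algebraically closed inversive $\cD$-subfields, the definition of $\ind$ becomes: $a \ind_k L$ iff $k(\Theta a)^{\alg}$ is linearly disjoint from $L$ over $k$ inside $\UU$. Because $L$ is algebraically closed and $k(\Theta a)/k$ is separable (we are in characteristic zero), the standard fact that linear disjointness passes to algebraic closures over a separably closed base tells us that this is equivalent to $k(\Theta a)$ itself being linearly disjoint from $L$ over $k$.

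Next, I would observe that $k(\Theta a) = \bigcup_r k(\Theta_r a)$, so linear disjointness of $k(\Theta a)$ from $L$ over $k$ is equivalent to linear disjointness of each $k(\Theta_r a)$ from $L$ over $k$. In characteristic zero (where all extensions are separable), the latter holds iff $\Theta_r(a)$ remains algebraically independent of $L$ over $k$, i.e.\ iff $\trdeg(\Theta_r(a)/k) = \trdeg(\Theta_r(a)/L)$. Assembling across all $r$, this is exactly the statement $\dim_\cD(a/k) = \dim_\cD(a/L)$ (noting that the inequality $\trdeg(\Theta_r(a)/L)\leq \trdeg(\Theta_r(a)/k)$ always holds, so equality of the $\omega$-sequences term by term is the right condition).

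The two directions then follow: if $a \ind_k L$, the linear disjointness yields the termwise equality of transcendence degrees and hence $\dim_\cD(a/k) = \dim_\cD(a/L)$; conversely, termwise equality implies that each finite sub-tuple of $\Theta a$ is algebraically independent over $L$ relative to its independence over $k$, giving linear disjointness of $k(\Theta a)$ from $L$ over $k$, which lifts to the algebraic closures as above and yields $a \ind_k L$. There is no real obstacle here; the only point demanding care is the passage between linear disjointness of $k(\Theta a)$ and of $k(\Theta a)^{\alg}$ over $k$, which is where the hypothesis that $L$ is algebraically closed (and characteristic zero) is used essentially.
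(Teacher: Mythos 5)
Your proposal is correct and follows essentially the same route as the paper: identify $\acl(ka)$ with $k(\Theta a)^{\alg}$ via Proposition~\ref{acl}, reduce to the finite pieces $\Theta_r(a)$, and translate independence (the paper's definition already treats algebraic independence and linear disjointness as interchangeable over the algebraically closed base $k$) into preservation of $\trdeg(\Theta_r(a))$ for every $r$. The extra care you take in passing between $k(\Theta a)$ and $k(\Theta a)^{\alg}$ is harmless but not needed, since algebraic independence over $k$ depends only on transcendence bases.
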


\begin{proof}
\begin{eqnarray*}
a\ind_kL
&\iff&
\acl(ka)\text{ is algebraically independent of $L$ over $k$} \ \ \ \ \text{(by definition)}\\
&\iff&
k(\Theta a)^{\alg}\text{ is algebraically independent of $L$ over $k$} \ \ \ \ \text{(by~\ref{acl})}\\
&\iff&
k\big(\Theta_r(a)\big)^{\alg}\text{ is algebraically independent of $L$ over $k$ for all $r<\omega$}\\
&\iff&
\trdeg(\Theta_r(a)/L)=\trdeg(\Theta_r(a)/k)\text{ for all $r<\omega$}\\
&\iff&
\dim_{\cD}(a/L)=\dim_{\cD}(a/k)
\end{eqnarray*}
 \end{proof}

\medskip
\subsection{Elimination of imaginaries}
\label{sectionei}

We follow the same basic strategy for proving elimination of imaginaries as that of Chatzidakis and Hrushovski in~\cite{acfa1}.

\begin{theorem}
$\th(\mathbb U,\partial)$ eliminates imaginaries.
\end{theorem}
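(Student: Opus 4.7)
The plan is to follow the Chatzidakis--Hrushovski argument for $\operatorname{ACFA}$ in~\cite{acfa1} nearly verbatim, adapted to the richer $\cD$-field language. The standard reduction splits elimination of imaginaries into two parts: weak elimination of imaginaries (WEI), asserting that every imaginary is interalgebraic over the real sort with a real tuple, together with elimination of finite imaginaries. The latter is automatic here because the underlying field reduct is algebraically closed (Theorem~\ref{ecedomains}(I)), so any finite set of real tuples is coded by the elementary symmetric functions in an enumeration, giving a real coding tuple.

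To establish WEI, I would fix $e\in\mathbb U^{\eq}$ and a real tuple $a$ with $e\in\dcl^{\eq}(a)$, work over a small model $M\prec(\mathbb U,\partial)$ chosen so that $e$ is $M$-definable from $a$, and set $p(x):=\tp(a/Me)$. Construct a Morley sequence $(a_i)_{i<\omega}$ of realizations of $p$; by local character (part~(e) of the independence theorem), a canonical base for $p$ in the simple theory is bounded by $\acl^{\eq}(Ma_0\cdots a_n)$ for some finite $n$. By Proposition~\ref{acl}, the set $\acl(Ma_0\cdots a_n)=\langle Ma_0\cdots a_n\rangle^{\alg}$ is a genuine real algebraically closed inversive $\cD$-subfield $F$, and I would code $F$ up to interdefinability by a finite real tuple $c$, using finite EI inside the ACF reduct to pass from the infinite inversive closure to a finite coding tuple. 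Then $c\in\acl^{\eq}(e)$ is immediate from $\aut$-invariance. For the reverse inclusion $e\in\dcl^{\eq}(c)$, let $\alpha\in\aut(\mathbb U^{\eq}/c)$; choose $a',a''$ realizing $\tp(a/ce)$ and $\tp(a/c\,\alpha e)$ respectively with $a'\ind_c a''$, and apply the independence theorem over $c$ (part~(g)) to amalgamate them into a single realization $a^*$ of $\tp(a/ce)\cup\tp(a/c\,\alpha e)$; since $e$ and $\alpha e$ are both definable from $a^*$ over $M$, this forces $\alpha e=e$.

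The main obstacle is verifying that the canonical base of $p$ really is coded by a real tuple rather than living only in $\mathbb U^{\eq}$. The cleanest way to manage this is to exploit the interplay between Proposition~\ref{acl} and the version of the independence theorem we have proved: the latter holds over any algebraically closed inversive $\cD$-subfield, while the former says that $\acl^{\eq}(F)\cap\mathbb U=F$ for such $F$. Consequently, any $\aut(\mathbb U/F)$-invariant type is already over $F$ in the real sense, so the smallest such $F$ (which is where the canonical base of $p$ must live) is a real algebraically closed inversive $\cD$-subfield, from which a finite real coding tuple is extracted via finite EI in the ACF reduct. Once this real canonical base is in place, the independence-theorem amalgamation above runs through exactly as in the $\operatorname{ACFA}$ setting, completing the proof.
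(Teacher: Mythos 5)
There is a genuine gap, and it sits exactly where the real work of the theorem lies. Weak elimination of imaginaries requires showing that $e$ is definable over $E:=\acl^{\eq}(e)\cap\mathbb U$, i.e.\ over the real algebraic closure of \emph{$e$ itself}. Your construction never gets there: the set you produce, $F=\acl(Ma_0\cdots a_n)$ for a small model $M$ and a Morley sequence in $\tp(a/Me)$, contains $M$ and the $a_i$ and so is in general not contained in $\acl^{\eq}(e)$; definability of $e$ over such an $F$ yields nothing toward weak EI. Moreover the step ``code $F$ up to interdefinability by a finite real tuple $c$ using finite EI in the ACF reduct'' is not a legitimate move: $F$ is an infinite algebraically closed field, and coding \emph{finite} sets has no bearing on it --- the passage to a finite tuple is available only \emph{after} one knows $e\in\dcl^{\eq}(E)$ with $E\subseteq\acl^{\eq}(e)$, by compactness. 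Further problems: local character (part~(e)) gives only a countable, not finite, initial segment of the Morley sequence (finiteness would need supersimplicity, which is neither proved nor needed); your amalgamation of $\tp(a/ce)$ and $\tp(a/c\,\alpha e)$ invokes the independence theorem over the finite tuple $c$, whereas part~(g) is proved only over algebraically closed inversive $\cD$-subfields, and you have not arranged $e\ind_c\alpha e$ nor kept track of the parameters from $M$ defining $f$; and the paragraph meant to address ``the main obstacle'' is circular, since asserting that the canonical base ``must live'' in the smallest real algebraically closed inversive $\cD$-subfield is precisely the statement to be proved.

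For comparison, the paper isolates, following $\S 1.10$ of~\cite{acfa1}, the criterion: for $e=f(a)$ it suffices to produce $c\models\tp\big(a/E\big)$ with $f(c)=e$ and $c\ind_E a$, where $E=\acl^{\eq}(e)\cap\mathbb U$ (weak EI then upgrades to full EI because we are in a theory of fields). The entire content is the construction of such a $c$: by Neumann's Lemma one finds $b\models\tp(a/Ee)$ with $\acl^{\eq}(Ea)\cap\acl^{\eq}(Eb)\cap\mathbb U=E$; a compactness argument lets one take such a $b$ with $\dim_{\cD}(b/\acl(Ea))$ lexicographically maximal (Definition~\ref{dimtheta}); one then takes $c\models\tp(b/\acl(Ea))$ with $c\ind_{Ea}b$, and a dimension computation using Lemma~\ref{dimfork} (the characterisation of nonforking via $\dim_{\cD}$) gives $c\ind_{Eb}a$, hence $c\ind_E ab$ and in particular $c\ind_E a$. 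None of these ingredients --- Neumann's Lemma, the maximal-dimension choice, or Lemma~\ref{dimfork} --- appears in your proposal, and supplying them (or a genuine substitute) is what a correct proof requires.
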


\begin{proof}
The proof of elimination of imaginaries for $\operatorname{ACFA}$ given in~$\S1.10$ of~\cite{acfa1} actually proves that a simple theory admits weak elimination of imaginaries if {\em given any imaginary element $e=f(a)$, where $a$ is a tuple from the home sort and $f$ is a definable function, there exists $c\models\tp\big(a/\acl^{\eq}(e)\cap\mathbb U\big)$ with $f(c)=e$ and $c\ind_{\acl^{\eq}(e)\cap\mathbb U}a$.}
Since in any theory of fields weak elimination of imaginaries implies full elimination of imaginaries, it suffices to prove the existence of such a $c$.

Let $E:=\acl^{\eq}(e)\cap\mathbb U$.
As pointed out in~$\S1.10$ of~\cite{acfa1}, Neumann's Lemma implies that there exists $b\models\tp(a/Ee)$ with $\acl^{\eq}(Ea)\cap\acl^{\eq}(Eb)\cap\mathbb U=E$.
We first claim that such a $b$ can be chosen of maximal $\dim_{\cD}$ over $\acl(Ea)$ in the lexicographic ordering.
First, for each $r$, choose $b_r$ so that the above holds and $\big(\trdeg(\Theta_i(b_r/\acl(Ea)):i\leq r\big)$ is maximal possible.
Let $n_r:=\trdeg(\Theta_r(b_r)/\acl(Ea))$.
Note that for all $i\leq r$, $\trdeg(\Theta_i(b_r)/\acl(Ea))=n_i$.
Now let $\Phi(x)$ be the partial type over $\acl(Ea)$ saying that
\begin{itemize}
\item[]
$x\models\tp(a/Ee)$,
\item[]
$\acl^{\eq}(Ea)\cap\acl^{\eq}(Ex)\cap\mathbb U=E$, and,
\item[]
for each $r<\omega$, $\trdeg(\Theta_r(x)/\acl(Ea))\geq n_r$.
\end{itemize}
The $b_r$'s witness that $\Phi(x)$ is finitely satisfiable, and hence by compactness it is satisfiable.
Letting $b$ realise $\Phi(x)$ we have that  $b\models\tp(a/Ee)$, $\acl^{\eq}(Ea)\cap\acl^{\eq}(Eb)\cap\mathbb U=E$, and $\dim_{\cD}(b/\acl(Ea))$ is maximal.

Now we proceed as in~$\S1.10$ of~\cite{acfa1}.
Let $c\models\tp(b/\acl(Ea))$ with $c\ind_{Ea}b$.
Then $c\models\tp(a/Ee)$ and so $f(c)=e$.
So it remains to show that $c\ind_Ea$.

We have that $\acl^{\eq}(Ec)\cap\acl^{\eq}(Eb)\subseteq\acl^{\eq}(Ea)$ by independence, and hence
$\acl^{\eq}(Ec)\cap\acl^{\eq}(Eb)\cap\mathbb U\subseteq \acl^{\eq}(Ea)\cap\acl^{\eq}(Eb)\cap\mathbb U=E$.
Letting $c'$ be such that $\tp(bc/Ee)=\tp(ac'/Ee)$ we have that $c'\models\tp(a/Ee)$ and $\acl^{\eq}(Ec')\cap\acl^{\eq}(Ea)\cap\mathbb U=E$.
So by maximality, $\dim_{\cD}(c'/\acl(Ea))\leq\dim_{\cD}(b/\acl(Ea))$.
Hence, as $\dim_{\cD}$ is automorphism invariant, $\dim_{\cD}(c/\acl(Eb))\leq\dim_{\cD}(b/\acl(Ea))$.
But, on the other hand,
$$\dim_{\cD}(c/\acl(Eb))\geq\dim_{\cD}(c/\acl(Eab))=\dim_{\cD}(c/\acl(Ea))=\dim_{\cD}(b/\acl(Ea))$$
where the first equality is by Lemma~\ref{dimfork}.
Hence we have equality throughout, and $\dim_{\cD}(c/\acl(Eb))=\dim_{\cD}(c/\acl(Eab))$ which, by Lemma~\ref{dimfork} again, implies that $c\ind_{Eb}a$.
Since we also have $c\ind_{Ea}b$, and $\acl^{\eq}(Ea)\cap\acl^{\eq}(Eb)\cap\mathbb U=E$, we get $c\ind_Eab$.
In particular $c\ind_Ea$, as desired.
\end{proof}

\bigskip

\section{The Zilber dichotomy for finite-dimensional minimal types}
\label{zilbersec}

\noindent
In this final chapter we begin to study the fine structure of definable sets in $\ecdf$.
As is by now a standard approach, the first step is to prove a Zilber dichotomy type theorem for the types of $SU$-rank one as these form the building blocks of the finite rank definable sets.
A second step, which we do not carry out here, would be to consider regular types more generally.
Our current methods only allow us to handle the ``finite-dimensional" case.

We continue to work in a sufficiently saturated model $(\mathbb U,\partial)\models\ecdf$, and over a (small) inversive $\cD$-subfield $k$.

\begin{definition}
A type $p=\tp(a/k)$ is called {\em finite-dimensional} if the $\cD$-field generated by $a$ over $k$ is of finite transcendence degree over $k$.
\end{definition}

From Proposition~\ref{acl} we know that in general $\acl(ka)=k\langle a\rangle^{\alg}$, the field-theoretic algebraic closure of the {\em inversive} $\cD$-field generated by $a$ over $k$.
The anonymous referee of an earlier version of this paper pointed out to us that in the finite-dimensional case the inversiveness comes for free:

\begin{lemma}
\label{zoetip}
Suppose $\tp(a/k)$ is finite-dimensional.
Then $\acl(ka)$ is the field-theoretic algebraic closure of the $\cD$-field generated by $a$ over $k$.
\end{lemma}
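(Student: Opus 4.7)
The plan is to set $R := k\{a\}$, the $\cD$-subfield of $\UU$ generated by $a$ over $k$, and prove that $R^{\alg}$ (algebraic closure in $\UU$) is already an inversive $\cD$-subfield. Once that is established, $R^{\alg}$ contains $k\langle a\rangle$, and hence contains $k\langle a\rangle^{\alg} = \acl(ka)$ by Proposition~\ref{acl}. The reverse inclusion $R^{\alg} \subseteq \acl(ka)$ is immediate from $R \subseteq k\langle a\rangle$.

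The crux, and the only place where finite-dimensionality is used, is showing that each associated endomorphism $\sigma_j$ restricts to an \emph{automorphism} of $R^{\alg}$. Since $\sigma_j$ is an $A$-linear combination of $\partial_0,\dots,\partial_{\ell-1}$ and $R$ is a $\cD$-subring, we have $\sigma_j(R) \subseteq R$, and thus $\sigma_j(R^{\alg}) \subseteq R^{\alg}$. Moreover, as $k$ is inversive, $\sigma_j|_k$ is an automorphism of $k$, so the field isomorphism $\sigma_j\colon R^{\alg} \to \sigma_j(R^{\alg})$ shows these two fields have the same transcendence degree over $k$. By hypothesis this common degree is finite, forcing $R^{\alg}$ to be algebraic over $\sigma_j(R^{\alg})$; but $\sigma_j(R^{\alg})$ is itself algebraically closed (being a field-isomorphic image of $R^{\alg}$), so $\sigma_j(R^{\alg}) = R^{\alg}$. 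Hence $\sigma_j^{-1}$ preserves $R^{\alg}$.

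With closure under $\sigma_j^{-1}$ in hand, showing $R^{\alg}$ is also closed under the $\partial_i$ follows the Hensel-theoretic argument used in the proof of Lemma~\ref{algsubstructure}. Given $b \in R^{\alg}$ with minimal polynomial $P(x) \in R[x]$, the residue $\sigma_i(b)$ of the $i$-th component $e_i(b) \in \cD_i(\UU)$ now lies in $R^{\alg}$ and is a simple root of $P^{\sigma_i}(x) \in R[x]$ (using separability in characteristic zero together with injectivity of $\sigma_i$). Hensel's Lemma in the complete local ring $\cD_i(R^{\alg})$ then produces a unique lift of $\sigma_i(b)$ to a root of $P^{e_i}(x)$ in $\cD_i(R^{\alg})$, and the corresponding uniqueness statement in $\cD_i(\UU)$ identifies this lift with $e_i(b)$, placing $e_i(b)$ in $\cD_i(R^{\alg})$ as required.

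The only real obstacle is the transcendence-degree argument in the second step: everything else is bookkeeping around Proposition~\ref{acl} and a direct analogue of Lemma~\ref{algsubstructure}. The essential observation is that in a finite-dimensional configuration an injective field endomorphism of an algebraically closed field must be surjective, and this surjectivity is exactly what compensates for the failure of the inversiveness hypothesis of Lemma~\ref{algsubstructure}.
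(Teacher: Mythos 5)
Your proposal is correct, and its overall shape matches the paper's: both arguments identify $\acl(ka)$ by showing that the field-theoretic algebraic closure of the $\cD$-field $R$ generated by $a$ over $k$ is already an inversive $\cD$-subfield, and then invoke Proposition~\ref{acl}. The differences lie in how the two sub-steps are carried out. For surjectivity of the associated endomorphisms, the paper argues elementwise: for $b\in R^{\alg}$, finite-dimensionality makes $\{b,\sigma_i b,\dots,\sigma_i^r b\}$ algebraically dependent over $k$, and pulling the witnessing relation back by $\sigma_i^{-1}$ gives $\sigma_i^{-1}b\in R^{\alg}$. You instead argue globally that $\sigma_i(R^{\alg})$ is an algebraically closed subfield containing $k=\sigma_i(k)$ of the same finite transcendence degree over $k$, hence equals $R^{\alg}$; this uses finite-dimensionality in exactly the same way and is, if anything, tidier. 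The second sub-step, closure of $R^{\alg}$ under the operators $\partial_i$, is left implicit in the paper's proof (which establishes only the $\sigma_i^{-1}$-closure and concludes); you carry it out explicitly by rerunning the Hensel/\'etale lifting argument of Lemma~\ref{algsubstructure}, correctly observing that the inversiveness hypothesis there is only needed for the ``inversive'' part of its conclusion, since simplicity of the root $\sigma_i(b)$ of $P^{\sigma_i}\in R[x]$ already follows from characteristic zero together with injectivity of $\sigma_i$. So your write-up is essentially the paper's proof with the surjectivity step repackaged and the omitted $\cD$-subfield step filled in; nothing in it is wrong.
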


\begin{proof}
Let $L$ be the $\cD$-field generated by $a$ over $k$.
So if $\Xi$ is the set of all finite words on $\{\partial_1,\dots,\partial_{\ell-1}\}$, then $L=k(\xi a:\xi\in\Xi)$.
We need to show that $L^{\alg}$ is already inversive.
Let $b\in L^{\alg}$, and fix one of the associated endomorphisms $\sigma_i$.
By finite-dimensionality, that is by the finite transcendence of $L^{\alg}$ over $k$, for some $r\geq 0$, $\{b,\sigma_ib,\dots,\sigma_i^rb\}$ is algebraically dependent over $k$. Applying $\sigma_i^{-1}$ sufficiently many times to the algebraic relation witnessing this, we get that $b\in k(\sigma_ib,\dots,\sigma_i^rb)^{\alg}$.
Applying $\sigma_i^{-1}$ one more time, we get $\sigma_i^{-1}b\in k(b,\sigma_ib,\dots,\sigma_i^{r-1}b)^{\alg}\subseteq L^{\alg}$.
So $L^{\alg}$ is inversive, as desired.
\end{proof}

\begin{corollary}
\label{zoecor}
Suppose $\tp(a/k)$ is finite-dimensional and let $\Xi$ be the set of all finite words on $\{\partial_1,\dots,\partial_{\ell-1}\}$.
For $L$ any inversive $\cD$-field extending $k$,
$$a\ind_kL \ \iff \ \trdeg\big(k(\xi a:\xi\in\Xi)/k\big)=\trdeg\big(L(\xi a:\xi\in\Xi)/L\big)$$
\end{corollary}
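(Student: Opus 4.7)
The plan is to chain together three facts: the definition of forking independence, Lemma~\ref{zoetip}, and a standard transcendence-degree characterisation of algebraic independence.

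First, I would unpack the definition. By definition $a\ind_k L$ means $\acl(ka)$ is algebraically independent from $\acl(kL)=L$ over $k$ (using that $k$ and $L$ are already algebraically closed; if $L$ is not, one replaces it by $L^{\alg}$, which changes nothing algebraically). Proposition~\ref{acl} together with Lemma~\ref{zoetip}, where the finite-dimensionality hypothesis is used crucially, gives
\[
\acl(ka)\;=\;k\langle a\rangle^{\alg}\;=\;k(\xi a:\xi\in\Xi)^{\alg}.
\]
Thus $a\ind_k L$ iff $k(\xi a:\xi\in\Xi)^{\alg}$ and $L$ are algebraically independent over $k$, iff $k(\xi a:\xi\in\Xi)$ and $L$ are, since adjoining algebraic elements does not affect algebraic independence.

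Second, I would invoke the following standard field-theoretic fact: if $F/k$ is a field extension of finite transcendence degree and $L/k$ is any extension (inside a common overfield), then $F$ and $L$ are algebraically independent over $k$ iff $\trdeg(FL/L)=\trdeg(F/k)$. Applying this with $F=k(\xi a:\xi\in\Xi)$, which has finite transcendence degree over $k$ by the finite-dimensionality assumption, yields
\[
k(\xi a:\xi\in\Xi)\ind_k L\;\Longleftrightarrow\;\trdeg\bigl(L(\xi a:\xi\in\Xi)/L\bigr)=\trdeg\bigl(k(\xi a:\xi\in\Xi)/k\bigr).
\]
Combining with the previous paragraph completes the argument.

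There is no real obstacle; the whole content is that finite-dimensionality lets us replace the inversive $\cD$-closure (which appears in the general description of $\acl$ and hence in $\dim_{\cD}$) by the plain $\cD$-closure generated by the iterates of $\partial_1,\dots,\partial_{\ell-1}$. Everything else is a routine algebraic translation of independence into equality of transcendence degrees, already carried out in Lemma~\ref{dimfork}; the present corollary is simply the finite-dimensional refinement in which the infinite sequence $\dim_{\cD}(a/k)$ collapses to the single number $\trdeg\bigl(k(\xi a:\xi\in\Xi)/k\bigr)$.
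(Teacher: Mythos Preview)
Your proposal is correct and follows essentially the same line as the paper's proof: use Lemma~\ref{zoetip} to identify $\acl(ka)$ with $k(\xi a:\xi\in\Xi)^{\alg}$, then convert algebraic independence into equality of (finite) transcendence degrees. One small remark: neither $k$ nor $L$ is assumed algebraically closed in the setup, so the definition of independence really gives algebraic independence of $k(\xi a:\xi\in\Xi)^{\alg}$ from $L^{\alg}$ over $k^{\alg}$; your parenthetical acknowledges this for $L$ but you should say the same for $k$, though of course passing to algebraic closures changes neither algebraic independence nor the transcendence degrees in question.
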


\begin{proof}
By Lemma~\ref{zoetip}, $\acl(ka)=k(\xi a:\xi\in\Xi)^{\alg}$.
So by definition, $a\ind_kL$ if and only if and only if $k(\xi a:\xi\in\Xi)^{\alg}$ is algebraically independent of $L^{\alg}$ over $k^{\alg}$.
Since $k(\xi a:\xi\in\Xi)$ is of finite transcendence degree over $k$, this last condition is equivalent to the one stated in the corollary.
\end{proof}

\begin{definition}
The {\em field of constants} is $C:=\{x\in\mathbb U:e(x)=s(x)\}$.
\end{definition}

The goal of this chapter is to prove that if $p$ is a finite-dimensional type of $\operatorname{SU}$-rank one, then either $p$ is one-based or it is almost internal to the field of constants.
We follow here the strategy of Pillay and Ziegler~\cite{pillayziegler03} by proving first a canonical base property (see~\ref{cbp} for a precise statement in our context) using an appropriate notion of {\em jet spaces}.
One such notion well-suited to the present context was developed in~\cite{paperB}, but some preliminaries are necessary to relate the formalisms of that paper and the current one.

Assumptions~\ref{assumptionA} and the notation of Chapter~\ref{chapter-mc} remain in place.

\medskip
\subsection{Iterativity}
We begin by describing how $\cD$ gives rise to a generalised iterative Hasse-Schmidt system in the sense of~\cite{paperB}.
The construction here is essentially the same as (though dual to) that of Kamensky (Proposition~2.3.2 of~\cite{kamensky}).

First of all, one can always form a completely free iterative Hasse-Schmidt system by simply iterating $\mathcal D$ with itself.
That is, one defines the projective system of finite free algebra schemes $(\cD^{(n)}, s_n,\psi_n,)$ by
\begin{itemize}
\item[]
$\cD^{(n+1)}(R)=\cD\big(\cD^{(n)}(R)\big)$
\item[]
$s_{n+1}^R:= s^{\cD^n(R)} \circ s_n^R:R\to\cD^{(n+1)}(R)$
\item[]
$\psi_{n+1}^R:=(\psi_n^R)^\ell\circ\psi^{\cD^{(n)}(R)}:\cD^{(n+1)}(R)\to (R^\ell)^{n+1}$
\item[]
$f_{n+1}^R:=\pi^{\cD^{(n)}(R)}:\cD^{(n+1)}(R)\to\cD^{(n)}(R)$
\end{itemize}
for any $A$-algebra $R$.
For details on composing finite free $\mathbb S$-algebras see $\S 4.2$ of~\cite{paperA}.
Equipped with the trivial iterativity maps (since $\cD^{(n+m)}=\cD^{(n)}\circ\cD^{(m)}$), this becomes a generalised iterative Hasse-Schmidt system (see~2.2 and ~2.17 of~\cite{paperB}).

However, the above construction does not take into account the fact that in our $\cD$-rings $(R,e)$, the coefficient of $\epsilon_0$ in $e(a)$ is always $a$.
In other words, the fact that $e$ is a section to $\pi=\pr_1\circ\psi$.
We will thus need to define a sequence of subalgebra schemes $\cD_n\subseteq \cD^{(n)}$ by identifying the appropriate co-ordinates.
This is done as follows.
Given an $A$-algebra $R$, fix the $R$-basis $\big\{\epsilon_{i_1}\otimes\cdots\otimes\epsilon_{i_n}:0\leq i_j\leq\ell-1\}$ for $\cD^{(n)}(R)$.
Define $(i_1,\dots,i_n)\sim(j_1,\dots,j_n)$ if $(i_1,\dots,i_n)$ and $(j_1,\dots,j_n)$ yield the same ordered tuple when all the zeros are dropped.
Then $\cD_n(R)$ is the subalgebra of elements
$$\left\{\displaystyle\sum r_{i_1,\dots,i_n}(\epsilon_{i_1}\otimes\cdots\otimes\epsilon_{i_n})\ | \ r_{i_1,\dots,i_n}=r_{j_1,\dots,j_n}\text{ whenever }(i_1,\dots,i_n)\sim(j_1,\dots,j_n)\right\}$$
It follows from this that $\psi_n:\cD^{(n)}\to\mathbb (S^\ell)^n$ maps $\cD_n$ onto the diagonal defined by equating the $(i_1,\dots,i_n)$th and $(j_1,\dots,j_n)$th co-ordinates whenever $(i_1,\dots,i_n)\sim(j_1,\dots,j_n)$.
This diagonal is canonically identified with the free $\mathbb S$-module scheme $\mathbb S^{L_n}$ where $L_n:=\{(i_1,\dots,i_m):0\leq m\leq n, 0< i_j\leq\ell-1\}$.

\begin{remark}
We can define $\cD_n$ in a co-ordinate free manner as follows.
Given $n>0$ and $1\leq i\leq n$, consider the morphism of algebra schemes
$$\lambda_{i,n}:=\cD^{i-1}(f_{n-i+1}):\cD^{(n)}\to\cD^{(n-1)}$$
Then $\cD_n$ is the equaliser of $\lambda_{1,n},\dots,\lambda_{n,n}$.
\end{remark}

The following properties follow:
\begin{itemize}
\item
$s_n:\mathbb S\to\cD^{(n)}$ maps $\mathbb S$ to $\cD_n$, so the latter becomes an $\mathbb S$-algebra scheme.
\item
$\psi_n:\cD^{(n)}\to\mathbb (S^\ell)^n$ maps $\cD_n$ to $\mathbb S^{L_n}$ isomorphically as
an ${\mathbb S}$-module
\item
$f_n:\cD^{(n)}\to\cD^{(n-1)}$ restricts to a surjective morphism of $\mathbb S$-algebra schemes from $\cD_n$ to $\cD_{n-1}$.
\item
As subalgebra schemes of $\cD^{(m+n)}$, $\cD_{m+n}\subseteq\cD_m\circ\cD_n$.
\end{itemize}
Hence, $\underline\cD:=(\cD_n)$ is a generalised iterative Hasse-Schmidt system.

\begin{proposition}
\label{etoE}
Suppose $(R,e)$ is a $\cD$-ring.
Then there is a unique iterative $\underline\cD$-ring structure $E= \big(E_n:R\to\cD_n(R):n<\omega\big)$ on $R$ with $E_1=e$.
In terms of co-ordinates, this $\underline\cD$-ring structure is given by
\begin{eqnarray}
\label{enop}
E_n(a)
&=&
\sum_{(i_1,\dots,i_m)\in L_n}\partial_{i_1}\cdots\partial_{i_m}(a)\epsilon_{i_1,\dots,i_m}
\end{eqnarray}
where $L_n:=\{(i_1,\dots,i_m):0\leq m\leq n, 0< i_j\leq\ell-1\}$ and $\{\epsilon_{i_1,\dots,i_m}\}$
is the $R$-basis for $\cD_n(R)$ obtained from the standard basis for $R^{L_n}$ via $\psi_n^R$.
\end{proposition}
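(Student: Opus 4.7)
The plan is to construct $E_n$ by iterating $e$ via the functoriality of $\cD$. Set $E_1 := e$, and recursively define $E_{n+1} := \cD(E_n) \circ e$, where $\cD(E_n) : \cD(R) \to \cD(\cD^{(n)}(R)) = \cD^{(n+1)}(R)$ is obtained by applying the functor $\cD$ to the $A$-algebra homomorphism $E_n$. Unwinding the recursion gives
\[
E_n \;=\; \cD^{(n-1)}(e) \circ \cD^{(n-2)}(e) \circ \cdots \circ \cD(e) \circ e,
\]
an $A$-algebra homomorphism $R \to \cD^{(n)}(R)$.

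Next, I would verify that the image of $E_n$ lies in the equalizer $\cD_n(R) \subseteq \cD^{(n)}(R)$. By the remark in the text, $\cD_n$ is the equalizer of $\lambda_{1,n},\dots,\lambda_{n,n}$, where $\lambda_{i,n} = \cD^{i-1}(f_{n-i+1})$ is the morphism applying $\pi$ at the $i$-th level of the iterated $\cD$. Since $\pi \circ e = \id_R$ (the defining section property of $e$), applying $\lambda_{i,n}$ to $E_n$ functorially \emph{deletes} the $i$-th occurrence of $e$ in the iterated composition and in every case reduces to the same map, namely the image of $E_{n-1}$ under the inclusion $\cD^{(n-1)}(R) \hookrightarrow \cD^{(n)}(R)$ at the appropriate slot. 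Hence all $\lambda_{i,n}\circ E_n$ agree and $E_n$ factors through $\cD_n(R)$.

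Iterativity follows directly from the recursive construction: splitting the composition defining $E_{m+n}$ at the $m$-th step yields $E_{m+n} = \cD^{(m)}(E_n) \circ E_m$, which realizes on $R$ the inclusion $\cD_{m+n} \subseteq \cD_m \circ \cD_n$ of the fourth bullet preceding the proposition. For the coordinate formula, an induction on $n$ in the basis $\{\epsilon_{i_1}\otimes\cdots\otimes\epsilon_{i_n}\}$ of $\cD^{(n)}(R)$ gives
\[
E_n(a) \;=\; \sum_{(i_1,\dots,i_n)\in\{0,\dots,\ell-1\}^n} \partial_{i_1}\cdots\partial_{i_n}(a)\cdot \epsilon_{i_1}\otimes\cdots\otimes\epsilon_{i_n},
\]
with the convention $\partial_0=\id$. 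Because $\partial_0$ is the identity, the coefficient depends only on the subtuple of nonzero indices, which is precisely the equivalence relation defining $\cD_n(R)$; collapsing each equivalence class to its unique representative in $L_n$ under $\psi_n^R$ recovers~(\ref{enop}).

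Uniqueness is by induction on $n$: any iterative $\underline\cD$-ring structure $E'$ with $E_1'=e$ must satisfy, under the inclusion $\cD_{n+1}\hookrightarrow \cD\circ\cD_n$, the relation $E_{n+1}' = \cD(E_n')\circ e$, which coincides with the defining recursion for $E$; hence $E_n'=E_n$ for all $n$. The main obstacle I expect is the second step: the diagram chase identifying $\lambda_{i,n}\circ E_n$ with the same image of $E_{n-1}$ for every $i$ requires careful bookkeeping of the functorial expansion of $\cD^{i-1}(f_{n-i+1})$ along the nested composition $\cD^{(n-1)}(e)\circ\cdots\circ e$, and it is here that the assumption that $e$ is a section of $\pi$ (rather than merely an $A$-algebra homomorphism to $\cD(R)$) is essential.
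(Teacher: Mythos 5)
Your proposal is correct and follows essentially the same route as the paper: define $E_n$ by the recursion $E_0=\id$, $E_{n+1}=\cD(E_n)\circ e$, verify that $E_n$ lands in $\cD_n(R)$ and has the form~(\ref{enop}) by the coordinate computation using $e(a)=a\epsilon_0+\partial_1(a)\epsilon_1+\cdots+\partial_{\ell-1}(a)\epsilon_{\ell-1}$ and $\partial_0=\id$, and obtain uniqueness because iterativity together with $E_1=e$ forces exactly this recursion. The abstract equalizer/naturality diagram chase that you single out as the main obstacle is in fact redundant: your own coordinate computation (the coefficients depending only on the subtuple of nonzero indices) already proves $E_n(a)\in\cD_n(R)$, which is precisely how the paper handles this point.
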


\begin{proof}
First of all, recall that $E= \big(E_n:R\to\cD_n(R):n<\omega\big)$ is an iterative $\underline\cD$-ring structure on $R$, according to Definitions~2.2 and ~2.17 of~\cite{paperB}, if the maps are all ring homomorphisms and
\begin{itemize}
\item[(i)]
$E_0=\id$
\item[(ii)]
$f_{m,n}^R\circ E_m=E_n$ for all $m\geq n$, where $f_{m,n}:\cD_m\to\cD_n$ is $f_m\circ\cdots\circ f_{n+1}$,
\item[(iii)]
$E_{m+n}=\cD_m(E_n)\circ E_m$ for all $m,n$.
\end{itemize}
For existence, we define $E_n:R\to\cD^{(n)}(R)$ by composing $e$ with itself $n$-times.
That is, recursively, $E_0=\id$ and $E_{n+1}=\cD(E_n)\circ e$.
That $E_n$ maps $R$ to $\cD_n(R)$, and that it has the form claimed in~(\ref{enop}), is not difficult to check using the fact that $e(a)=a+\partial_1(a)\epsilon+\cdots+\partial_{\ell-1}(a)\epsilon_{\ell-1}$. Properties~(i) through~(iii) follow more or less immediately from~(\ref{enop}).

For uniqueness, note that the assumption that $E_1=e$ and property~(iii) force $E_{n+1}=\cD(E_n)\circ e$, so that the above construction is the only one possible.
\end{proof}

\medskip

\subsection{\underline{$\cD$}-varieties and generic points}
We return now to our saturated model $(\mathbb U,\partial)$ of $\ecdf$, and equip it with the definable iterative $\underline\cD$-field structure $E$ given by Proposition~\ref{etoE}.
We fix also an inversive $\cD$-subfield $k$.

In Section~3 of~\cite{paperB} the rudiments of $\underline\cD$-algebraic geometry are developed.
Let us recap some of the notions introduced there.
We work inside a fixed irreducible algebraic variety $X$ over $k$.
While the treatment in~\cite{paperB} is more general, for the sake of concreteness and also with an eye toward our model-theoretic intentions, we will assume that $X$ is affine.
By a {\em $\underline\cD$-subvariety of $X$ over $k$} is meant a sequence $\underline Z=(Z_n)_{n<\omega}$ of subvarieties $Z_n\subseteq \tau_n X$ over $k$ such that for all $0<n<\omega$,
\begin{itemize}
\item[(1)]
$\hat f_n:\tau_nX\to\tau_{n-1}X$ restricts to a morphism from $Z_n\to Z_{n-1}$, and
\item[(2)]
$Z_n\subseteq\tau(Z_{n-1})$.
\end{itemize}
Here, $\tau_n X=\tau(X,\cD_n,E_n)$ is the $n$th prolongation of $X$ in the sense of~$\S2.1$ of~\cite{paperB}.
In particular, $\tau_nX(\mathbb U)$ is canonically identified with $X\big(\cD_n(\mathbb U)\big)$.

\begin{remark}
The iterativity condition of Definition~3.1 of~\cite{paperB} here simplifies to~(2) above because the iterativity map here is just the containment $\cD_n\subseteq\cD\circ\cD_{n-1}$ as subschemes of $\cD^{(n)}$.
\end{remark}

The map $E_n$ induces a definable map $\nabla_n:X(\mathbb U)\to\tau_nX(\mathbb U)$ which with respect to the standard bases is given by
$$\nabla_n(p):=\big(\partial_{i_1}\cdots\partial_{i_m}(p):0\leq m\leq n, 0< i_j\leq\ell-1\big)$$
We have seen this map appear already in the proof of Theorem~\ref{ecedomains}.

We say $\underline Z$ is ({\em absolutely}) {\em irreducible} if each $Z_n$ is (absolutely) irreducible.

By the {\em ${\mathbb U}$-rational points} (or simply the \emph{rational points}) of $\underline Z$
is meant the type-definable set
$$\underline Z(\mathbb U):=\{p\in X(\mathbb U):\nabla_n(p)\in Z_n(\mathbb U),\text{ for all }n<\omega\}$$
A rational point $p\in\underline Z(\mathbb U)$ is {\em $k$-generic} if $\nabla_n(p)$ is generic in $Z_n$ over $k$ in Weil's sense that
there is no proper $k$-subvariety $Y \subsetneq Z_n$ with $\nabla_n(p) \in Y({\mathbb U})$), for all $n$.

Of course nothing so far has guaranteed the existence of generic points, or even of rational points.
In~\cite{paperB} this is dealt with by working in ``rich'' $\underline\cD$-fields.
Here we do not have richness of $(\mathbb U,\partial)$, however, we can characterise precisely which $\underline\cD$-varieties do have generic points.
Indeed, what is required is the following higher-order analogue of the condition appearing in axiom~III of $\ecdf$ (cf.~Theorem~\ref{ecedomains}).

\begin{definition}[$\sigma$-dominance]
Suppose $\underline Z=(Z_n)_{n<\omega}$ is a $\underline\cD$-subvariety of an algebraic variety $X$.
We will say that $\underline Z$ is {\em $\sigma$-dominant} if for each $n>0$ and each $i=0,\dots,t$, the morphism $\hat\pi_i:\tau(Z_{n-1})\to Z_{n-1}^{\sigma_i}$ restricts to a dominant morphism from $Z_n$ to $Z_{n-1}^{\sigma_i}$.
(See $\S$\ref{mc} to recall what $\pi=\pi_0,\dots,\pi_t$ are.)
\end{definition}

\begin{remark}
\label{jetbdominance}
It follows from $\sigma$-dominance that $\hat f_n\upharpoonright_{Z_n}:Z_n\to Z_{n-1}$ is dominant.
Indeed, this is because $\pi_0=\pi$ and $f_n$ is just $\pi$ applied to $\cD^{(n)}$.
Hence a $\sigma$-dominant $\underline\cD$-variety is in particular {\em dominant} in the sense of~\cite{paperB}.
\end{remark}

\begin{proposition}
\label{genpoints}
Suppose $\underline Z$ is an absolutely irreducible $\underline\cD$-subvariety of $X$ over~$k$.
Then $\underline Z$ has a $k$-generic point if and only if $\underline Z$ is $\sigma$-dominant.
\end{proposition}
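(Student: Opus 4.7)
My plan is to handle the two directions separately: the forward implication follows immediately from the naturality of the prolongation construction, while the converse requires adapting the argument of Theorem~\ref{ecedomains} to work inductively at every level of the tower. For the forward direction, if $p\in\underline Z({\mathbb U})$ is $k$-generic then $\nabla_n(p)\in Z_n$ is itself $k$-generic, and by the naturality of how $\hat\pi_i$ is induced from $\pi_i:\cD\to\mathbb S$ we have $\hat\pi_i(\nabla_n(p))=\sigma_i(\nabla_{n-1}(p))$; since $({\mathbb U},\partial)$ is inversive, $\sigma_i$ restricts to an automorphism of $k$, so this image is $k$-generic in $Z_{n-1}^{\sigma_i}$, forcing $\hat\pi_i(Z_n)$ to be Zariski-dense there.

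For the converse, I would first use $\sigma$-dominance (which by Remark~\ref{jetbdominance} also gives dominance of every transition map $\hat f_n:Z_n\to Z_{n-1}$) to realise, in a sufficiently large algebraically closed extension $L\supseteq k$, a compatible sequence $(b_n)_{n<\omega}$ with $b_n\in Z_n(L)$ field-theoretic $k$-generic and $\hat f_n(b_n)=b_{n-1}$. Writing $X_0:=Z_0$, the point $b_0$ is $k$-generic in $X_0$, and the function fields $k(Z_n)$ form an increasing tower in $L$ with direct limit $k(Z_\infty)$. The plan is to install on $k(Z_\infty)$ a $\cD$-field structure whose associated iterative structure $E$, via Proposition~\ref{etoE}, satisfies $E_n(b_0)=b_n$ for every $n$; granted this, Lemmas~\ref{extendtofields}, \ref{extendtoalg}, and~\ref{extendtoautos} upgrade it to an inversive $\cD$-field structure on some $L'\supseteq L$, which by Proposition~\ref{overclosed} and saturation of $({\mathbb U},\partial)$ embeds into ${\mathbb U}$ over $k$, sending $b_0$ to the desired $k$-generic point of $\underline Z({\mathbb U})$.

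The key step is the definition of $E_1$ on $k[Z_\infty]$. The containment $Z_n\subseteq\tau Z_{n-1}$, combined with the identification $\tau Z_{n-1}(L)=Z_{n-1}(\cD(L))$, lets us regard each $b_n$ as an $A$-algebra homomorphism $\phi_n:k[Z_{n-1}]\to\cD(L)$ extending $e_k$, with $\pi^L\circ\phi_n$ equal to the inclusion $k[Z_{n-1}]=k[b_{n-1}]\hookrightarrow L$ (this being the translation of $\hat f_n(b_n)=b_{n-1}$). The $\phi_n$'s are compatible with the inclusions $k[Z_{n-2}]\subseteq k[Z_{n-1}]$, so they paste into a single $e:k[Z_\infty]\to\cD(L)$. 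Injectivity of the associated endomorphism $\sigma_i=\pi_i^L\circ e$ on $k[Z_n]$ is exactly the $\sigma$-dominance hypothesis at level $n+1$: that hypothesis makes $\hat\pi_i(b_{n+1})\in Z_n^{\sigma_i}(L)$ itself $k$-generic, so the map $k[Z_n^{\sigma_i}]\to L$ it induces is injective. Thus $(k[Z_\infty],L,\partial)\in\cM$.

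The main technical obstacle is verifying that the iterative structure $E=(E_n)$ recovered from the so-defined $E_1$ via Proposition~\ref{etoE} actually satisfies $E_n(b_0)=b_n$ for every $n$, and not only for $n=1$. This should reduce to unwinding the inclusion $\cD_n\subseteq\cD^{(n)}$ from Proposition~\ref{etoE} and tracking how the containments $Z_{n+1}\subseteq\tau Z_n$ force the values of $e$ on each $k[Z_n]$ to be precisely those that, upon iteration through $b_0,b_1,\ldots,b_n$, reconstruct $b_{n+1}$. Carrying out this bookkeeping cleanly is where the real work lies.
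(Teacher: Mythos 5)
Your forward direction is exactly the paper's argument and is fine. For the converse, however, the proof is not complete: the claim you defer as ``bookkeeping'' --- that the operators obtained from your pasted $e$ via Proposition~\ref{etoE} satisfy $E_n(b_0)=b_n$ for every $n$ --- is the whole content of the converse, not a routine verification; until it is proved, your construction only shows that $b_0$ becomes a generic point of $Z_0$ in some $\cD$-field extension, which says nothing about membership in $\underline Z(\mathbb U)$. Nor is it a formal consequence of $e(b_r)=b_{r+1}'$: the containment $Z_{r+1}\subseteq\tau Z_r$ and the recursion $E_{r+1}=\cD(E_r)\circ e$ of Proposition~\ref{etoE} realise the relevant points in different composites of the algebra schemes, and reconciling them requires the compatibility of $\nabla$ with the transition morphisms, i.e.\ Proposition~4.7(a) of~\cite{paperA}. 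That is precisely how the paper closes this point, via the identity $\nabla\big(\hat f_{n,r}(b)\big)=\hat f_{n,r+1}\big(\nabla(b_{n-1})\big)=\hat f_{n,r+1}(b)$, iterated in $r$. (Two smaller unproved assertions in your write-up --- that the $\phi_n$ are mutually compatible, and that a tower of $k$-generics with $\hat f_n(b_n)=b_{n-1}$ exists --- are true but each needs a line, the latter using that $k$-generic points map to $k$-generic points under the dominant maps $\hat f_n\upharpoonright_{Z_n}$ of Remark~\ref{jetbdominance}.)

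You can also avoid the infinite tower entirely, as the paper does: by saturation of $(\mathbb U,\partial)$ it suffices, for each fixed $n$, to produce $p\in X(\mathbb U)$ with $\nabla_n(p)$ generic in $Z_n$ over $k$ (finite satisfiability of the full genericity condition again uses dominance of the $\hat f$'s). One then takes a single $k$-generic $b\in Z_n(L)$, sets $b_{n-1}:=\hat f_n(b)$ and $a:=\hat f_{n,0}(b)$, defines $e$ only on $k[b_{n-1}]$ by $e(b_{n-1})=b'$ (well defined because $I(Z_{n-1}/k)=I(b_{n-1}/k)$ by dominance; the maps $\pi_i\circ e$ are injective by $\sigma$-dominance, exactly as in your injectivity paragraph), extends by Lemmas~\ref{extendtofields} and~\ref{extendtoautos}, and deduces $\nabla_n(a)=b$ from the displayed functoriality identity --- no global $\cD$-structure on $k(Z_\infty)$ and no induction over all levels is needed. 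Finally, before invoking Proposition~\ref{overclosed} you must first replace $k$ by its algebraic closure (harmless, by absolute irreducibility of $\underline Z$), since that proposition requires the common inversive $\cD$-subfield to be algebraically closed.
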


\begin{proof}
First suppose that $\underline Z$ has a $k$-generic point $p\in\underline Z(\mathbb U)$.
Then $\nabla_n(p)$ is generic in $Z_n(\mathbb U)$.
To prove $\sigma$-dominance we will show that $\hat\pi_i\big(\nabla_n(p)\big)$ is generic in $Z_{n-1}^{\sigma_i}$ over $k$.
Under the identification of $\tau Z_{n-1}(\mathbb U)$ with $Z_{n-1}\big(\cD(\mathbb U)\big)$, $\nabla_n(p)$ corresponds $e\big(\nabla_{n-1}(p)\big)$, and we need to show that $\pi_i\big(e(\nabla_{n-1}(p))\big)=\sigma_i\big(\nabla_{n-1}(p)\big)$ is generic in $Z_{n-1}^{\sigma_i}$ over $k$.
But this follows from the fact that $\nabla_{n-1}(p)$ is generic in $Z_{n-1}$ and $\sigma_i$ is an automorphism.

For the converse we assume that $\underline Z$ is $\sigma$-dominant and seek a generic rational point.
Without loss of generality we may assume that $k$ is algebraically closed.
By saturation it suffices to fix $n<\omega$ and show that there exists $p\in X(\mathbb U)$ such that $\nabla_n(p)$ is generic in $Z_n$ over $k$.
For this we follow the general strategy in the proof of Theorem~\ref{ecedomains}.
Let $L$ be an algebraically closed field extension of $k$, $b\in Z_n(L)$ a generic point over $k$, and $a:=\hat f_{n,0}(b)\in X(L)$.
We will show how to extend $\partial$ from $k$ to a $\cD$-field structure on some extension of $L$, such that $\nabla_n(a)=b$.
This $\cD$-field structure could then be further extended to a model of $\ecdf$, which by Proposition~\ref{overclosed} and saturation can then be embedded into $(\mathbb U,\partial)$ over $k$; thus establishing the existence of the desired $p\in X(\mathbb U)$.

Under the identification of $\tau Z_{n-1}(L)$ with $Z_{n-1}\big(\cD(L)\big)$, let $b'$ be the tuple from $\cD(L)$ corresponding to $b$.
We have that $P^e(b')=0$ for all $P(x)\in I(Z_{n-1}/k)$.
On the other hand, as $Z_n\to Z_{n-1}$ is dominant (cf.  Remark~\ref{jetbdominance}), $I(Z_{n-1}/k)=I(b_{n-1}/k)$.
It follows that $e$ on $k$ extends to a ring homomorphism $\eta:k[b_{n-1}]\to\cD(L)$, where $b_{n-1}$ is the image of $b$ under $Z_n\to Z_{n-1}$, by $\eta(b_{n-1})=b'$.
The assumption of $\sigma$-dominance implies that $\pi_i\circ\eta:k[b_{n-1}]\to L$ is injective, for each $i=1,\dots,t$, so that by Lemmas~\ref{extendtofields} and~\ref{extendtoautos} we can extend $\eta$ to a $\cD$-field structure on some field $L'$ extending $L$.
In this $\cD$-field, the fact that $\eta(b_{n-1})=b'$ means that $\nabla(b_{n-1})=b$.
It then follows, that for any $r<n$,
\begin{eqnarray*}
\nabla\big(\hat f_{n,r}(b)\big)
&=&
\nabla\big(\hat f_{n-1,r}(b_{n-1})\big)\\
&=&
\hat f_{n,r+1}\big(\nabla(b_{n-1})\big)\ \ \ \ \text{ cf. Proposition~4.7(a) of~\cite{paperA}}\\
&=&
\hat f_{n,r+1}(b)
\end{eqnarray*}
Iterating, and recalling that $\hat f_{n,0}(b)=a$ and $\hat f_{n,n}(b)=b$, we get that $\nabla_n(a)=b$, as desired.
\end{proof}

\medskip
\subsection{Jet spaces}
In~\cite{paperA} and~\cite{paperB}, following the work of Pillay and Ziegler~\cite{pillayziegler03}, we effected a linearisation of generalised Hasse-Schmidt varieties by introducing {\em jet spaces}.
We now specialise this theory to our present context (Fact~\ref{jetsdetermine} below), and also prove a finiteness theorem (Proposition~\ref{smalljet} below) that was not done in the earlier papers but is essential here.
We continue to work in a fixed affine algebraic variety $X$ over an inversive $\cD$-subfield $k$.

To each point $p$ of $X$, and for each $m>0$, we can associate a linear algebraic variety called the {\em $m$th algebraic jet space of $X$ at $p$}, denoted by $\jet^mX_p$.
It is a kind of higher-order tangent space; see~$\S 5$ of~\cite{paperA} for a review of this notion.
Now suppose that  $\underline Z=(Z_n)_{n<\omega}$ is a $\underline\cD$-subvariety of $X$ over $k$ and $p\in\underline Z(\mathbb U)$.
One would like to associate a $\underline\cD$-jet space to $\underline Z$ at $p$.
A natural thing would be to consider $\big(\jet^m(Z_n)_{\nabla_n(p)}\big)_{n<\omega}$.
However, this sequence does {\em not} determine a $\underline\cD$-subvariety of $\jet^mX_p$, simply because $\jet^m(Z_n)_{\nabla_n(p)}$ lives in $\jet^m(\tau_n X)_{\nabla_n(p)}$ rather than in $\tau_n(\jet^m X_p)$ as would be required by the definition of $\underline\cD$-subvariety.
We addressed this issue in~\cite{paperA} and~\cite{paperB} by studying a certain canonical linear {\em interpolating map} $\phi:\jet^m(\tau_nX)_{\nabla_n(p)}\to\tau_n(\jet^mX_p)$.
See $\S2.3$ of~\cite{paperB} for a brief description of $\phi$.
In~$\S4$ of~\cite{paperB} we were then able to define the {\em $m$th $\underline\cD$-jet space of $\underline Z$ at $p$}, denoted by $\jet^m_{\underline\cD}(\underline Z)_p$, as the $\cD$-subvariety defined by taking the Zariski closure of the images of the $\jet^m(Z_n)_{\nabla_n(p)}$ under these interpolating maps.
In particular we show (Lemma~4.4 of~\cite{paperB}) that for generic $p$,
\begin{equation}
\label{djet}
\jet_{\underline\cD}^m(\underline Z)_p(\mathbb U)=\left\{\lambda\in\jet^mX_p(\mathbb U):\nabla_n(\lambda)\in\phi\big(\jet^m(Z_n)_{\nabla_n(p)}(\mathbb U)\big),\forall n\geq 0\right\}
\end{equation}
As $\jet^m(Z_n)_{\nabla_n(p)}(\mathbb U)$ is a $\mathbb U$-linear subspace of $\jet^m(\tau_nX)_{\nabla_n(p)}(\mathbb U)$, $\phi$ is $\mathbb U$-linear, and $\nabla_n$ is $C$-linear, we get that $\jet_{\underline\cD}^m(\underline Z)_p(\mathbb U)$ is a $C$-linear subspace of $\jet^mX_p(\mathbb U)$.
Moreover, $\jet_{\underline\cD}^m(\underline Z)_p$ is the fibre above $p$ of a bundle $\jet_{\underline\cD}^m(\underline Z)\to\underline Z$ which is a $\underline\cD$-subvariety of the algebraic jet bundle $\jet^mX\to X$.
We refer the reader to~\cite{paperB} for more details on these spaces.

One thing that will be important for us is that if $\underline Z$ is $\sigma$-dominant then so is $\jet_{\underline\cD}^m(\underline Z)$.
Indeed, in Proposition~4.7 of~\cite{paperB} it is proved that the dominance of the $\hat\pi_0$ maps are preserved when one takes jets, and the very same proof works for the other $\hat\pi_i$ maps as well -- the key lemma behind all these cases being the ``compatibility of the interpolating map with comparing of prolongations'' which is~6.4(c) of~\cite{paperA}.
Similarly, the proof of~4.7 of~\cite{paperB} also shows that, if $p$ is a $k$-generic rational point of $\underline Z$, and $\underline Z$ is $\sigma$-dominant, then so is $\jet_{\underline\cD}^m(\underline Z)_p$.

We need one more piece of notation before stating the main result of~\cite{paperB} specialised to the present context.
Given $\underline Z=(Z_n)_{n<\omega}$ and $r<\omega$, we let
$\nabla_r\underline Z:=(Z_{r+n})_{n<\omega}$.
It is clear that $\nabla_r\underline Z$ is a $\underline\cD$-subvariety of $Z_r$.
If $\underline Z$ is $\sigma$-dominant, then this is also the case for $\nabla_r\underline Z$.
Finally, assuming $\sigma$-dominance, one can show that the set of rational points  of  $\nabla_r\underline Z$ is exactly $\nabla_r\big(\underline Z(\mathbb U)\big)$, see the proof\footnote{Actually the proof in this case is much easier as the iterativity maps are trivial.} of~3.16 of~\cite{paperB}.

\begin{fact}
\label{jetsdetermine}
Suppose $L$ and $L'$ are inversive $\cD$-subfields extending $k$, $\underline Z$ and $\underline Z'$ are absolutely irreducible $\underline\cD$-subvarieties of $X$ over $L$ and $L'$ respectively, and $p\in \underline{Z}(\mathbb U)\cap\underline{Z'}(\mathbb U)$ is $L$-generic in $\underline Z$ and $L'$-generic in $\underline Z'$.
If $\jet_{\underline\cD}^m(\nabla_r\underline Z)_{\nabla_r(p)}(\mathbb U)=\jet_{\underline\cD}^m(\nabla_r\underline Z')_{\nabla_r(p)}(\mathbb U)$ for all $m\geq 1$ and $r\geq 0$, then $\underline Z=\underline Z'$.
\end{fact}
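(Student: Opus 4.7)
The plan is to reduce the statement to a classical algebro-geometric fact: at a smooth point $q$ of an irreducible affine variety $V$, the full tower of algebraic jet spaces $\{\jet^m V_q\}_{m\geq 1}$ determines the completed local ring $\widehat{\mathcal O_{V,q}}$, and hence the germ of $V$ at $q$. Since both $Z_n$ and $Z'_n$ are absolutely irreducible affine subvarieties of $\tau_nX$ containing the common point $\nabla_n(p)$ as a generic point (over $L$ and $L'$ respectively, hence smooth in characteristic zero), their germs at $\nabla_n(p)$ coincide if and only if the subvarieties themselves do. So it is enough to show that
$$\jet^m(Z_n)_{\nabla_n(p)}=\jet^m(Z'_n)_{\nabla_n(p)}$$
as linear subvarieties of $\jet^m(\tau_nX)_{\nabla_n(p)}$ for every $n\geq 0$ and $m\geq 1$.

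To extract this equality from the $\underline\cD$-jet hypothesis, I apply formula (\ref{djet}) to the shifted $\underline\cD$-variety $\nabla_n\underline Z$, whose $0$-th layer is $Z_n$. Because $\tau_0$ is the identity functor, and the interpolating map at level zero is the identity, the $0$-th algebraic layer of the $\underline\cD$-subvariety $\jet^m_{\underline\cD}(\nabla_n\underline Z)_{\nabla_n(p)}$ of $\jet^m(Z_n)_{\nabla_n(p)}$ is exactly $\jet^m(Z_n)_{\nabla_n(p)}$ itself; the same holds for $\underline Z'$. Since $p$ is $L$-generic in $\underline Z$, Proposition~\ref{genpoints} yields $\sigma$-dominance of $\underline Z$, hence of $\nabla_n\underline Z$, and hence of $\jet^m_{\underline\cD}(\nabla_n\underline Z)_{\nabla_n(p)}$ by the preservation of $\sigma$-dominance under the $\underline\cD$-jet construction recalled just before the statement. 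A second application of Proposition~\ref{genpoints} then supplies a generic rational point of this $\underline\cD$-jet space, forcing its $\mathbb U$-rational points to be Zariski dense in the $0$-th layer $\jet^m(Z_n)_{\nabla_n(p)}$; an identical argument applies to $\underline Z'$. The hypothesis identifies the two sets of $\mathbb U$-rational points inside the common ambient $\jet^m(\tau_nX)_{\nabla_n(p)}(\mathbb U)$, so taking Zariski closures yields the desired equality of algebraic jet spaces.

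Combined with the algebro-geometric input above, this gives $Z_n=Z'_n$ for every $n$, and therefore $\underline Z=\underline Z'$. The point I expect to be most delicate is the propagation of $\sigma$-dominance to the $\underline\cD$-jet bundles of the \emph{shifted} $\underline\cD$-varieties $\nabla_r\underline Z$: the results I am importing from~\cite{paperB} are formulated for $\underline Z$ itself, but a reading of the proofs --- which ultimately rest on the compatibility of the interpolating map with the $\hat\pi_i$'s, i.e.\ on Proposition~6.4(c) of~\cite{paperA} --- shows that they apply verbatim after replacing $\underline Z$ by $\nabla_r\underline Z$. Once that verification is in place the rest of the argument is essentially formal.
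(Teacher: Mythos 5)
Your proposal is correct and follows essentially the same route as the paper, which proves the Fact by specialising the proof of Theorem~4.8 of~\cite{paperB}: one recovers each algebraic jet space $\jet^m(Z_r)_{\nabla_r(p)}$ as the Zariski closure of $\jet_{\underline\cD}^m(\nabla_r\underline Z)_{\nabla_r(p)}(\mathbb U)$ --- with density of rational points supplied, exactly as you do, by $\sigma$-dominance and Proposition~\ref{genpoints} in place of the richness hypothesis of~\cite{paperB} --- and then concludes $Z_r=Z_r'$ from the classical fact that the tower of jet spaces at a common point determines an irreducible subvariety. The delicate points you single out (preservation of $\sigma$-dominance for the jets of the shifted $\underline\cD$-varieties, and the genericity/good-locus conditions behind~(\ref{djet})) are precisely the two adjustments the paper itself makes when importing that proof.
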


\begin{proof}
This follows from the proof of Theorem~4.8 of~\cite{paperB} specialised to our context.
The only reason we cannot apply that theorem directly is because the ``richness'' assumptions of that theorem are not necessarily satisfied here.
However, richness is only used in the proof to ensure that all the relevant $\underline\cD$-varieties have (densely) many rational points.
Hence, because of Proposition~\ref{genpoints} of this paper, all one needs for that proof to go through is that the relevant $\underline\cD$-varieties here be $\sigma$-dominant.
This is the case for $\underline Z$ and $\underline Z'$ because by assumption they have a generic rational point, and as discussed in the preceding paragraphs taking jets and $\nabla_r$ preserves $\sigma$-dominance.

Let us also remark that Theorem~4.8 of~\cite{paperB} asks for $p$ to be in the ``good locus" of $\underline Z$ (and also of $\underline Z'$).
That is, $p$ should be smooth on $X$, $\hat f_n$ restricted to $Z_n$ should be smooth at $\nabla_n(p)$, and also $\nabla_n(p)$ should land inside a certain $L$-definable nonempty Zariski open subset of $Z_n$ mentioned in Lemma~4.4 of~\cite{paperB}, for all $n>0$.
Our assumption here that $p$ is $L$-generic (recalling also that we are in characteristic zero) ensures that $p$ is in the good locus.
\end{proof}

As one might expect, the linearisation that Fact~\ref{jetsdetermine} gives us is particularly useful when the $\underline\cD$-jet spaces are finite dimensional as vector spaces over the constants.
In the rest of this section we aim to prove that if $\underline Z=(Z_n)_{n<\omega}$ is ``finite-dimensional" in the sense that $\dim Z_n$ is bounded independently of $n$, then the jet space $\jet_{\underline\cD}^m(\underline Z)_p(\mathbb U)$ is a finite dimensional $C$-vector space.  Towards this
end we begin with a study of prolongations of schemes defined over the constants.  With our first technical lemma we observe that the constants are
constants for all of the higher exponentials as well.

\begin{lemma}
\label{iterconstant}
The maps $E_n$ and $s_n$ agree on $C$ for every natural number $n$.
\end{lemma}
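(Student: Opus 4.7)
The plan is to prove this by induction on $n$, using only the recursive definitions of $E_n$ and $s_n$ together with the naturality of the section $s:\mathbb S\to\cD$.

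For the base case $n=0$, both $E_0$ and $s_0$ are the identity on $R$ (the former by clause~(i) of the iterativity definition recalled in the proof of Proposition~\ref{etoE}, the latter by the recursive definition of the $s_n$), so they trivially agree. The case $n=1$ is just the definition of $C$: for $x\in C$ we have $E_1(x)=e(x)=s(x)=s_1(x)$.

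For the inductive step, suppose $E_n(x)=s_n(x)$ for all $x\in C$. Fix $x\in C$. Then
\[
E_{n+1}(x)\ =\ \cD(E_n)\circ e\,(x)\ =\ \cD(E_n)\bigl(s^{\mathbb U}(x)\bigr),
\]
where the first equality is the recursive description of $E_{n+1}$ from the proof of Proposition~\ref{etoE} and the second uses $x\in C$. Now because $s:\mathbb S\to\cD$ is a morphism of ring schemes over $A$, it is natural in the ring argument; applied to the $A$-algebra homomorphism $E_n:\mathbb U\to\cD^{(n)}(\mathbb U)$ this yields
\[
\cD(E_n)\circ s^{\mathbb U}\ =\ s^{\cD^{(n)}(\mathbb U)}\circ E_n.
\]
Combining these identities with the inductive hypothesis $E_n(x)=s_n(x)$ and with the recursive definition $s_{n+1}^{\mathbb U}=s^{\cD^{(n)}(\mathbb U)}\circ s_n^{\mathbb U}$ gives
\[
E_{n+1}(x)\ =\ s^{\cD^{(n)}(\mathbb U)}\bigl(E_n(x)\bigr)\ =\ s^{\cD^{(n)}(\mathbb U)}\bigl(s_n(x)\bigr)\ =\ s_{n+1}(x),
\]
completing the induction.

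There is no real obstacle here; the only subtle point is to recognize that naturality of $s$ is exactly the compatibility needed to push the inductive hypothesis one step up, and to keep straight that $\cD(E_n)$ means applying the functor $\cD$ to the $A$-algebra homomorphism $E_n$, not composing with some new map. Everything else is bookkeeping from the recursive definitions.
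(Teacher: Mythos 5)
Your proof is correct and follows essentially the same route as the paper: an induction on $n$ whose key step is a commuting square expressing compatibility of the section $s$ with the iterative construction. The only cosmetic difference is that you invoke naturality of $s$ with respect to the $A$-algebra homomorphism $E_n$, whereas the paper records the mirror-image square $s^{\cD_n(R)}\circ s_n=\cD(s_n)\circ s$ (verified via the tensor-product description); both reduce the inductive step to the same bookkeeping.
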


\begin{proof}
This lemma follows by induction and the iterative construction, once one observes that the following diagram commutes for any $A$-algebra $R$
$$
\xymatrix{
R \ar[d]_{s_n}\ar[rr]^{s} & & \cD(R) \ar[d]^{\cD(s_n)}\\
\cD_n(R)\ar[rr]_{s^{\cD_n(R)} \ \ \ } & & \cD\big(\cD_n(R)\big)
}
$$
To see that the above diagram does indeed commute one writes it in terms of tensor products as
$$
\xymatrix{
R \ar[d]_{s_n}\ar[rr]^{s} & & R\otimes_A\cD(A) \ar[d]^{s_n\otimes_A\cD(A)}\\
R\otimes_A\cD_n(A)\ar[rr]_{s^{\cD_n(R)} \ \ \ \ \ \ } & & R\otimes_A\cD_n(A)\otimes_A\cD(A)
}
$$
which commutes because all the maps in this diagram are the natural algebra structure maps.
\end{proof}

In Section 4.1 of~\cite{paperA}  maps between prolongation spaces associated to maps of $\bbs$-algebras are
constructed.  More precisely, given a map $\alpha:\cE \to \cF$ of finite free $\bbs$-algebras, an $A$-algebra
$k$, and $\cE$-ring and $\cF$-ring structures $e:k \to \cE(k)$ and $f:k \to \cF(k)$ on $k$ for which the diagram
$$\xymatrix{ \cE(k) \ar[rr]^{\alpha^k} & & \cF(k) \\ & k \ar[ul]_{e} \ar[ur]^{f} & }$$ commutes,
for any $k$-scheme $X$, there is a map $\widehat{\alpha}:\tau(X,\cE,e) \to \tau(X,\cF,f)$ so that for any $k$-algebra $R$ the
diagram
$$\xymatrix{\tau(X,\cE,e) \ar[rr]^{\widehat{\alpha}} \ar@{=}[d] && \tau(X,\cF,f) \ar@{=}[d] \\
X(\cE^e(R)) \ar[rr]^{\alpha} && X(\cF^f(R)) }$$
commutes where the vertical arrows come from the natural identifications.
  It is clear from this interpretation that if
$\beta:\cF \to \cG$ is another map of finite free-$\bbs$-algebras and $g:k \to \cG(k)$ makes $k$ into a $\cG$-ring
with $\beta^k \circ f = g$, then $\widehat{\beta \circ \alpha} = \widehat{\beta} \circ \widehat{\alpha}$.

Specialising to $k = C$ and $\alpha=s_n:\bbs \to \cD_n$, we obtain a {\em constant} section map $z_n := \widehat{s_n}: X = \tau(X,\bbs,\operatorname{id}_k) \to
\tau(X,\cD_n,E_n) = \tau_n X$.   By Proposition 4.18(c) of~\cite{paperA}, the map $z_n:X \to \tau_n X$ is a closed immersion.   That
$\underline{z X} := (z_n X)_{n < \omega}$ is a $\sigma$-dominant $\underline{\cD}$-subscheme of $X$ follows from the functoriality
in the $\bbs$-algebra of the prolongation space construction.  For example, from the commutativity of the diagram

$$
\xymatrix{ && \cD_{n+1} \ar[dd]_{f_n} \\
\bbs \ar[urr]^{s_{n+1}} \ar[drr]_{s_n} & & \\
&& \cD_n }
$$
we deduce that $\widehat{f}_n$ takes $z_{n+1} X$ to $z_n X$.

Likewise, from the commutativity of

$$
\xymatrix{
\cD_n \ar[rr]^{s_m^{\cD_n}} && \cD_m \circ \cD_n \\
& \cD_{n+m}  \ar@{^{(}->}[ur] & \\
\bbs \ar[ur]^{s_{n+m}} \ar[uu]^{s_n} \ar[rr]^{s_m} & & \cD_m \ar[uu]^{\cD_m(s_n)}
}
$$
and the  fact that $z_m$ is a natural transformation (see Proposition 4.18 of~\cite{paperA}), we obtain the following commutative diagram

$$
\xymatrix{
\tau_n X \ar[rr]^{z_{m,\tau_n X}} && \tau_m \tau_n X \\
z_n X \ar@{^{(}->}[u] \ar[rr]^{z_{m,z_n X}} && \tau_m z_n X \ar@{^{(}->}[u] \\
 & \tau_{m+n} X \ar@{^{(}->}[uur] & \\
 X \ar[uu]^{z_n} \ar[rr]^{z_m} \ar[ur]^{z_{n+m}} && \tau_{m} X \ar[uu]^{\tau_m(z_n)}
}
$$
from which we deduce that via the inclusion of $\tau_{n+m} X$ in $\tau_m \tau_n X$, $z_{n+m} X$ is
contained in $\tau_m z_n X$.   Thus, $\underline{z X}$ is a $\underline{\cD}$-subscheme of $X$.

Finally, to check that this $\underline{\cD}$-scheme is
$\sigma$-dominant, observe that if $\pi_i:\cD \to \bbs$ is a projection map corresponding to one of the distinguished endormorphisms $\sigma_i$, the commutative diagram of $\bbs$-algebras  $\xymatrix@1{\bbs \ar@/^/[r]^{s} & \cD \ar@/^/[l]_{\pi_i} }$ shows that $\hat{\pi}_i \circ z = \operatorname{id}$.
It follows that for every $n>0$, the restriction of $\hat{\pi}_i$ to $z_nX=z(z_{n-1}X)$ is dominant onto $z_{n-1}X = (z_{n-1} X)^{\sigma_i}$, where the final equality is because $\sigma_i$ is the identity on $C$.

With the next lemma we show that the $\mathbb U$-rational points of $\underline{z X}$ are exactly the $C$-rational points of $X$.

\begin{lemma}
\label{zerosect}
$\underline{z X} (\mathbb U)=X(C)$
\end{lemma}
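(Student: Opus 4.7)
The plan is to verify both inclusions by unpacking the canonical identifications. Under $\tau_n X(\mathbb U) = X(\cD_n(\mathbb U))$, the map $\nabla_n: X(\mathbb U) \to \tau_n X(\mathbb U)$ becomes coordinate-wise application of $E_n: \mathbb U \to \cD_n(\mathbb U)$; and since $z_n = \widehat{s_n}$ is a closed immersion (Proposition~4.18(c) of~\cite{paperA}) induced by applying $s_n$ on points, the subset $z_n X(\mathbb U) \subseteq \tau_n X(\mathbb U)$ corresponds to $s_n(X(\mathbb U)) \subseteq X(\cD_n(\mathbb U))$. Thus the defining condition of $\underline{zX}(\mathbb U)$ becomes: $E_n(p) = s_n(q_n)$ for some $q_n \in X(\mathbb U)$, for every $n \geq 0$.

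For the inclusion $X(C) \subseteq \underline{zX}(\mathbb U)$, I would invoke Lemma~\ref{iterconstant} directly: if $p$ is a tuple with coordinates in $C$, then since $E_n$ and $s_n$ agree on $C$, we have $E_n(p) = s_n(p) \in s_n X(\mathbb U)$ for every $n$, so $p \in \underline{zX}(\mathbb U)$.

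For the reverse inclusion $\underline{zX}(\mathbb U) \subseteq X(C)$, it suffices to use the condition at $n = 1$. Suppose $p \in \underline{zX}(\mathbb U)$, so $e(p) = s(q)$ for some $q \in X(\mathbb U)$. Applying $\pi^{\mathbb U}: \cD(\mathbb U) \to \mathbb U$ coordinate-wise and using both that $e$ is a section of $\pi^{\mathbb U}$ (definition of a $\cD$-ring) and that $s$ is a section of $\pi$, we obtain $p = q$. Hence $e(p) = s(p)$, which by the very definition $C = \{x \in \mathbb U : e(x) = s(x)\}$ means that each coordinate of $p$ lies in $C$, i.e., $p \in X(C)$.

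The argument is a definition chase; the only mild obstacle is bookkeeping the identifications, in particular justifying that on $\mathbb U$-points $z_n$ is literally $s_n$ applied coordinate-wise and that $\nabla_n$ is literally $E_n$. Once these identifications are in hand, Lemma~\ref{iterconstant} handles one direction and the twin section identities $\pi \circ e = \id = \pi \circ s$ handle the other.
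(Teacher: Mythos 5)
Your proof is correct and follows essentially the same route as the paper: the reverse inclusion is exactly the paper's appeal to Lemma~\ref{iterconstant}, and your forward direction (apply $\pi$ to $e(p)=s(q)$ and use the two section identities to force $q=p$, hence $e(p)=s(p)$) is just a rephrasing of the paper's observation that $\nabla(p)$ and $z_1(p)$ both lie in the fibre of $z_1X$ over $p$, which is a single point since $z_1$ is a section. The identifications you flag ($\nabla_n$ given by $E_n$ and $z_n$ by $s_n$ on points, with $z_nX(\mathbb U)=z_n(X(\mathbb U))$ since $z_n$ is a closed immersion) are exactly what the paper uses implicitly.
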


\begin{proof}
Suppose $p\in \underline{z X}(\mathbb U)$.
In particular, $\nabla(p)=z_1(p)$.
Now as $\nabla$ is induced by $e$ and $z_1$ is induced by $s_1=s$ and $\nabla(p) = z_1(p)$ (as both $\nabla(p)$ and $z_1(p)$ lie
on $z_1 X(\mathbb U)$ over $p$ but there is only one point in this fibre as $z_1$ is a morphism), we conclude that $e(p) = s(p)$.
That is, $p\in X(C)$.

For the converse, by Lemma~\ref{iterconstant} the maps $s_n$ and $E_n$ agree on $C$ for all $n$.  Thus, if $p \in X(C)$, then
$\nabla_n(p) = z_n(p)$ for all natural numbers $n$.  In particular, $\nabla_n(p) \in z_n X(\mathbb U)$ for all $n$ so that
$p \in \underline{z X}(\mathbb U)$.
\end{proof}

We show now that injectivity of a map of algebraic groups on the $C$-points is reflected by the injectivity of higher prolongations
of the map when restricted to the constant section.

\begin{lemma}
\label{eventualinjective}
Let $G$ be an algebraic group over $C$ and let $\Lambda:G_\UU \to H$ be a morphism of algebraic groups over
$\UU$ where $G_\UU$ is the base change of $G$ to $\UU$.  If the kernel of the restriction of $\Lambda$ to $G(C)$ is
trivial, then for $n \gg 0$, the kernel of the restriction of $\tau_n(\Lambda)$ to $z_n G(\UU)$ is trivial.
\end{lemma}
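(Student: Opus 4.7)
The plan is to analyse the decreasing sequence of closed subgroup schemes
\[
N_n := \ker\bigl(\tau_n(\Lambda) \circ z_n \colon G_\UU \to \tau_n H\bigr) \subseteq G_\UU,
\]
show by Noetherianity that it stabilises, and argue that the stable limit must be trivial. The decrease $N_{n+1} \subseteq N_n$ follows from the naturality relations $\widehat{f}_{n+1} \circ z_{n+1} = z_n$ and $\widehat{f}_{n+1} \circ \tau_{n+1}(\Lambda) = \tau_n(\Lambda) \circ \widehat{f}_{n+1}$ recorded in the diagrams preceding Lemma~\ref{zerosect}: if $p \in N_{n+1}$ then $\tau_n(\Lambda)(z_n p) = \widehat{f}_{n+1}\bigl(\tau_{n+1}(\Lambda)(z_{n+1} p)\bigr) = e$. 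Since $G_\UU$ is Noetherian, the chain stabilises to some $N_\infty := N_{n_0}$, and since $N_0 = \ker \Lambda$ we also have $N_\infty \subseteq \ker \Lambda$.

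The key step is to show that $N_\infty$ descends to $C$, meaning $N_\infty = N_C \times_C \UU$ for a closed subgroup $N_C \subseteq G$. Expanding $\tau_n(\Lambda)(z_n p)$ in the basis $\{\epsilon_\mu\}$ of $\cD_n(\UU)$---using that $z_n$ embeds via $s_n$ while $\tau_n(\Lambda)$ is obtained by applying $E_n$ to the coefficients of $\Lambda$ (cf.\ Proposition~\ref{etoE})---one sees that the vanishing $\tau_n(\Lambda)(z_n p) = e$ translates, coefficient by coefficient, into a family of polynomial identities of the form $\Lambda^{\partial^\mu}(p) = e$, where $\Lambda^{\partial^\mu}$ denotes $\Lambda$ with each coefficient replaced by its $\partial^\mu$-iterate. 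Consequently $I(N_\infty) \subseteq \UU[G_\UU]$ is generated by the polynomials $\Lambda^{\partial^\mu}$ for finitely many $\mu$, and this ideal is stable under each $\partial_i$ acting on coefficients, since $\partial_i(\Lambda^{\partial^\mu}) = \Lambda^{\partial_i \partial^\mu}$ is again among the generators. Applying the $\cD$-closed-field analogue of Kolchin's theorem---that a radical ideal stable under the $\cD$-operators is generated by its intersection with $C[G_\UU]$---yields $N_\infty = N_C \times_C \UU$.

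To finish, combining $N_\infty \subseteq \ker \Lambda$ with the hypothesis gives
\[
N_C(C) \subseteq N_\infty(\UU) \cap G(C) \subseteq \ker \Lambda \cap G(C) = \{e\}.
\]
In characteristic zero $N_C$ is reduced, and one argues that a $C$-subgroup of $G$ with trivial $C$-points must itself be trivial---immediately when $C$ is algebraically closed (as in the purely derivational case), and in general via a density/saturation argument exploiting that $(\UU,\partial)$ is saturated. Thus $N_\infty = \{e\}$, and pulling back under the isomorphism $z_n \colon G_\UU \xrightarrow{\sim} z_n G_\UU$ yields the desired triviality of $\ker(\tau_n(\Lambda)|_{z_n G(\UU)})$ for $n \geq n_0$.

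The main obstacle will be the descent step: formulating and proving the Kolchin-type descent principle in the general $\cD$-ring formalism where $\cD$ mixes derivations and endomorphisms, and then addressing the subtlety that $C$ need not be algebraically closed when the associated endomorphisms are non-trivial, so that the passage from $N_C(C) = \{e\}$ to $N_C = \{e\}$ requires a bit more than naive density of $C$-points.
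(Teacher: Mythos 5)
Your overall strategy is genuinely different from the paper's, and as it stands it has two real gaps, both located exactly where you flag ``the main obstacle''. First, the descent step. The claim that $I(N_\infty)$ is stable under the operators because ``$\partial_i(\Lambda^{\partial^\mu})=\Lambda^{\partial_i\partial^\mu}$ is again among the generators'' is false as written: $\partial_i\partial^\mu$ has length $|\mu|+1$, which exceeds the stabilisation index, so the resulting polynomial is \emph{not} among your generators; to place it in $I(N_\infty)$ you must invoke stabilisation of the chain and pass to the radical ideal, and then you still need that stability of a generating set implies stability of the ideal, which is not automatic since the $\partial_i$ are neither derivations nor endomorphisms (only the generalised Leibniz rule~(\ref{multrule}) is available, and one must first set up the coefficientwise action correctly, i.e.\ extend $e$ to $\UU[x]$ by making the variables constants). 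More seriously, the ``$\cD$-closed-field analogue of Kolchin's theorem'' that an operator-stable radical ideal is generated by its intersection with $C[x]$ is nowhere available: the classical Wronskian argument covers derivations and a Gr\"obner-basis argument covers automorphisms, but in the mixed formalism this is a nontrivial lemma you would have to prove. Second, even granting descent to a $C$-group $N_C$, the inference from $N_C(C)=\{e\}$ to $N_C=\{e\}$ needs Zariski-density of $C$-points on geometrically irreducible $C$-varieties (some PAC-type largeness of $C$), which the paper never establishes and your ``density/saturation argument'' does not supply. Both missing ingredients are precisely what the paper's machinery replaces.

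For contrast, the paper's proof stays entirely inside the $\underline\cD$-variety formalism and never descends to $C$ nor uses any property of the field $C$ beyond Lemma~\ref{zerosect}: it sets $K_n:=\ker(\tau_n(\Lambda)\upharpoonright z_nG)$, forms the limit groups $V_n:=\bigcap_{m\geq n}\widehat f_{m,n}(K_m)$ (so nontriviality of infinitely many $K_n$ forces nontriviality of the $V_n$), shows $\underline V=(V_n)$ is a $\sigma$-dominant $\underline\cD$-subvariety of $\underline{zG}$ (using $\tau(\ker\rho)=\ker(\tau\rho)$ and $\hat\pi_i\circ z=\id$), observes $\underline V(\UU)\subseteq G(C)\cap\ker\Lambda(\UU)=\{1\}$ by Lemma~\ref{zerosect}, and concludes $V_n=\{1\}$ from Proposition~\ref{genpoints}, since a $\sigma$-dominant $\underline\cD$-variety has generic rational points. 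In effect, Proposition~\ref{genpoints} applied to constant-section varieties is what plays the role of your missing descent-plus-density principle; if you want to salvage your route, you would essentially have to reprove that content (e.g.\ deduce density of $C$-points from axiom~III via the constant section), at which point the paper's direct argument is shorter.
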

\begin{proof}
For each natural number $n$, define $K_n := \ker (\tau_n(\Lambda) \upharpoonright z_n G)$ as an algebraic subgroup
of $z_n G$.
We aim to show that the $K_n$'s are eventually trivial.
Define $V_n := \bigcap_{m \geq n} \widehat{f}_{m,n} (K_m)$ where $\widehat{f}_{m,n}:\tau_m G \to \tau_n G$ is
the map in the projective system defining the $\underline{\cD}$-variety $\underline{G} = (\tau_n G)$.   Since each $K_m$ is an
algebraic group and $\widehat{f}_{m,n}$ is a morphism of algebraic groups, $V_n$ is an algebraic subgroup of $z_n G$.
If the $K_n$ were nontrivial for arbitrarily large $n$, then because $\widehat{f}_{m,n}$ maps
$K_m$ to $K_n$, the $V_n$ would also be nontrivial.
So it suffices to show that $V_n=\{1\}$ for all $n$.

From the descending intersection defining $V_n$, note that $\widehat{f}_n:V_n \to V_0$ is surjective.
Thus,
$V_n = z_n V_0$.
We now prove that $\underline V=(V_n)$ is a $\sigma$-dominant $\underline\cD$-variety.
Note that if we knew  $V_0$ were defined over the constants, this would have followed automatically from our discussion of constant sections.

By definition, $K_{n+1} = \ker \tau_{n+1}(\Lambda) \upharpoonright z_{n+1} G$.
On the other hand, note that whenever one has a morphism of algebraic groups $\rho:A\to B$, then $\tau(\ker\rho)=\ker(\tau\rho)$.
Indeed, this follows from the fact that, for any algebra $R$, the identification of $\tau A(R)$ with $A\big(\cD^e(R)\big)$ identifies $\tau \rho$ with $\rho$ evaluated on $A\big(\cD^e(R)\big)$.
Hence $\tau K_n = \ker(\tau \tau_n(\Lambda)
\upharpoonright \tau z_n G)$.  Thus, from the inclusion $z_{n+1} G \hookrightarrow \tau z_n G$, we obtain an inclusion
$K_{n+1} \hookrightarrow \tau K_n$.  Taking intersections, we see that $V_{n+1}$ is included in $\tau V_n$.  Thus, $\underline{V} = (V_n)$ is a $\underline \cD$-subvariety of $\underline{zG}=(z_nG)$.
For $\sigma$-dominance, fixing $i=0,\dots,t$ and $n>0$, we have
\begin{eqnarray*}
V_{n-1}
&=&
\hat\pi_i\big(zV_{n-1}) \ \ \text{as $\hat\pi_i\circ z=\id$}\\
&=&
\hat\pi_i(V_n)\\
&\subseteq&
V_{n-1}^{\sigma_i} \ \ \ \text{as $\hat\pi_i:\tau(V_{n-1})\to V_{n-1}^{\sigma_i}$}
\end{eqnarray*}
And so we have equality throughout, and $\underline V$ is a $\sigma$-dominant $\underline\cD$-variety.

However,
$\underline{V}(\UU) \subseteq \underline{zG}(\UU) \cap \ker(\Lambda)(\UU) = G(C) \cap \ker(\Lambda)(\UU) = \{ 1 \}$, the penultimate equality being Lemma~\ref{zerosect}.
Thus, by Proposition~\ref{genpoints}, we must have $V_n(\UU)  = \{ 1 \}$ for all $n$, as desired.
\end{proof}

We are now in a position to show that the jet spaces of a finite dimensional $\underline{\cD}$-variety are always finite dimensional
$C$-vector spaces.

\begin{proposition}
\label{smalljet}
Suppose $\underline Z=(Z_n)$ is a $\underline\cD$-subvariety of $X$ over $k$ such that $\dim Z_n$ is bounded independently of $n$.
Suppose $p\in\underline Z(\mathbb U)$ is $k$-generic.
Then for each $m\geq 1$, $\jet^m_{\underline\cD}(\underline Z)_p(\mathbb U)$ is a finite dimensional $C$-vector space.
\end{proposition}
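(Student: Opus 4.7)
The plan is to reduce the statement to a linear-algebra fact about $\underline\cD$-varieties via uniform dimension bounds, then invoke Lemma~\ref{eventualinjective} in the spirit of the Pillay-Ziegler argument. Write $\underline J=(J_n)$ for $\jet^m_{\underline\cD}(\underline Z)_p$. My first step is to obtain a uniform upper bound $\dim J_n\leq D$. The hypothesis bounds $\dim Z_n$ by some $N_0$ independent of $n$; since the algebraic $m$th jet space $\jet^m(Z_n)_{\nabla_n(p)}$ has dimension bounded polynomially in $\dim Z_n$ and $m$, and $J_n$ is the image of this space under the $\UU$-linear interpolating map $\phi$, we get $\dim J_n\leq D$ uniformly in $n$.

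Next, I would invoke the $\sigma$-dominance of $\underline J$, recalled in the discussion preceding Fact~\ref{jetsdetermine}. By Remark~\ref{jetbdominance}, the maps $\hat f_n|_{J_n}\colon J_n \to J_{n-1}$ are dominant, and since $J_n$ and $J_{n-1}$ are linear subspaces of vector groups, in fact surjective. Combined with the uniform bound $D$, the monotone sequence $\dim J_n$ stabilizes at some $D^*\leq D$ for all $n\geq N$, and $\hat f_n|_{J_n}\colon J_n \to J_{n-1}$ is a linear isomorphism for every $n>N$.

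To bound $\dim_C \underline J(\UU)$, I would fix a linear identification $V:=\jet^m X_p \cong \Ga^d_{\UU}$, which presents $V$ as the base change $G_\UU$ of $G:=\Ga^d_C$ from $C$. Then the prolongations $\tau_nV$ are naturally base changes of $\tau_nG$, and the $C$-defined constant section $z_n\colon G\to\tau_nG$ base-changes to a $C$-defined $\UU$-morphism of algebraic groups $z_n\colon V\to\tau_nV$; by Lemma~\ref{iterconstant}, $z_n$ coincides with $\nabla_n$ on $V(C)=G(C)$. Now suppose for contradiction that $\dim_C \underline J(\UU)>D^*$, and pick $C$-linearly independent $\lambda_0,\dots,\lambda_{D^*}\in\underline J(\UU)$. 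Their images $\nabla_N(\lambda_i)$ lie in the $D^*$-dimensional $\UU$-space $J_N(\UU)$, and hence satisfy a nontrivial $\UU$-linear relation $\sum u_i\nabla_N(\lambda_i)=0$. Replacing $\nabla_N$ by $z_N$ via Lemma~\ref{iterconstant}, I would encode this as the kernel of an algebraic group morphism $\Lambda\colon G_\UU^{D^*+1}\to\tau_NV$ over $\UU$ defined using the scalars $u_i$ and the constant section; the tuple $(\lambda_0,\dots,\lambda_{D^*})\in G(C)^{D^*+1}$ would then be a nonzero kernel point. Lemma~\ref{eventualinjective} (applied in the contrapositive and iterated up the tower of prolongations) would propagate the non-injectivity in a way that conflicts with the stabilization $\dim J_n=D^*$ for $n\geq N$, giving the required contradiction and hence $\dim_C\underline J(\UU)\leq D^*$.

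The main obstacle is the final Wronskian-style step. Since $\nabla_N$ is only $C$-linear (not $\UU$-algebraic), turning a $\UU$-linear dependence among the $\nabla_N(\lambda_i)$ into an honest morphism of algebraic groups over $\UU$ requires the passage via $z_N$ and Lemma~\ref{iterconstant}; and one must then verify that the iterative propagation of non-trivial kernels through prolongations, as orchestrated by Lemma~\ref{eventualinjective}, really does conflict with the stable bound $\dim J_n=D^*$. Executing this amounts to carrying out, in the general $\cD$-field formalism of this paper, the linear algebra behind the classical fact that a rank-$r$ $\partial$-module has at most $r$-dimensional space of horizontal sections over the constants, which is the core of the Pillay-Ziegler strategy.
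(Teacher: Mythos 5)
Your first step (the uniform bound on $\dim T_n$, where $T_n=\phi\big(\jet^m(Z_n)_{\nabla_n(p)}\big)$) matches what is needed, and Lemma~\ref{eventualinjective} is indeed the right tool, but the heart of the argument --- converting $C$-linear independence of $\lambda_0,\dots,\lambda_{D^*}\in\jet^m_{\underline\cD}(\underline Z)_p(\UU)$ into a contradiction with the bounded algebraic dimension --- is set up backwards and never actually carried out. The $\lambda_i$ lie in a $C$-subspace of the $\UU$-vector space $\jet^mX_p(\UU)$, but they are not constant points: under your identification $\jet^mX_p\cong\Ga^d_\UU$ the tuple $(\lambda_0,\dots,\lambda_{D^*})$ is in general \emph{not} an element of $G(C)^{D^*+1}$, and $z_N(\lambda_i)\neq\nabla_N(\lambda_i)$, since Lemma~\ref{iterconstant} identifies $z_N$ with $\nabla_N$ only on constants. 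So the morphism $\Lambda$ you propose to build from the scalars $u_i$ has no exhibited nontrivial constant kernel point, and Lemma~\ref{eventualinjective} (which is about morphisms injective on $C$-points) does not engage. Moreover, a $\UU$-linear relation $\sum u_i\nabla_N(\lambda_i)=0$ at the single level $N$ is automatic from the finite $\UU$-dimension of $J_N(\UU)$ and carries no contradiction by itself; it does not propagate to higher levels because $\nabla_n$ is only $C$-linear. You flag this ``Wronskian-style step'' as the main obstacle and leave it unexecuted, so the proof is incomplete exactly where the work lies; the stabilization of $\dim J_n$ at $D^*$ and the surjectivity of $\hat f_n\upharpoonright_{J_n}$ are not what is needed (and the sharper bound $D^*$ is not required for the statement).

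The repair is to invert the roles: use the $\lambda_i$ as \emph{coefficients}, and the constants as inputs. With $\mu:=N+1$ and $C$-linearly independent $\lambda_1,\dots,\lambda_\mu$, define $g:\Ga^\mu\to\jet^mX_p$ over $\UU$ by $g(x_1,\dots,x_\mu)=\sum_{i=1}^\mu x_i\lambda_i$. By $C$-linear independence, $g$ is injective on $\Ga^\mu(C)=\underline{z\Ga^\mu}(\UU)$ (Lemma~\ref{zerosect}). One then checks that $\tau_ng$ restricts to a morphism $z_n\Ga^\mu\to T_n$: it suffices to see this on a $k$-generic point of $z_n\Ga^\mu$, which is of the form $\nabla_n(q)$ for a generic constant tuple $q$, and $\tau_ng\big(\nabla_n(q)\big)=\nabla_n\big(g(q)\big)\in T_n$ because $g(q)\in\jet^m_{\underline\cD}(\underline Z)_p(\UU)$ (this set being a $C$-subspace) and because $\nabla_n$ is compatible with morphisms of varieties. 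Now Lemma~\ref{eventualinjective}, applied directly rather than in the contrapositive, gives that for $n\gg 0$ the restriction of $\tau_ng$ to $z_nG(\UU)$ has trivial kernel, forcing $\dim T_n\geq\mu=N+1$ and contradicting the uniform bound on $\dim T_n$. This completes the argument without any appeal to the stabilization of the dimensions $\dim J_n$.
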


\begin{proof}
Fix $m\geq 1$.
As explained at the beginning of this section, see~(\ref{djet}) in particular, we have that $\jet^m_{\underline\cD}(\underline Z)_p=(T_n)_{n<\omega}$ where $T_n:=\phi\big(\jet^m(Z_n)_{\nabla_n(p)}\big)$.
As $\dim Z_n$ is bounded, so is $\dim(\jet^m(Z_n)_{\nabla_n(p)})$, and hence also $\dim T_n$.
Let $N$ be a bound on $\dim T_n$.
We will show that the $C$-dimension of $\jet^m_{\underline\cD}(\underline Z)_p(\mathbb U)$ is bounded by $N$.

Toward a contradiction, set $\mu:=N+1$ and suppose $\lambda_1,\dots,\lambda_\mu\in \jet^m_{\underline\cD}(\underline Z)_p(\mathbb U)$ are $C$-linearly independent.
Consider the map $g:\Ga^\mu\to \jet^m(X)_p$ given by $(x_1,\dots,x_\mu)\mapsto\sum_{i=1}^\mu x_i\lambda_i$.
Note that $g$ restricted to the $C$-points is injective, and hence Lemma~\ref{eventualinjective} will apply.
Recall that by Lemma~\ref{zerosect}, $\Ga^\mu(C)=\underline {z\Ga^\mu}(\mathbb U)$.
We claim that for each $n$, $\tau_ng$ restricts to a morphism from $z_n\Ga^\mu$ to $T_n$.
It suffices to check that $\tau_ng$ takes a $k$-generic point of $z_n\Ga^\mu$ to $T_n$.
But if we take a $k$-generic point of $\underline {z\Ga^\mu}$, say $q\in\Ga^\mu(C)$, then $\nabla_n(q)$ is $k$-generic in $z_n\Ga^\mu$, and
$\tau_ng\big(\nabla_n(q)\big) =\nabla_n\big(g(q)\big)$ by Proposition~4.7(a) of~\cite{paperA}.
This latter is in $T_n$ since $g(q)\in\jet^m_{\underline\cD}(\underline Z)_p(\mathbb U)$.
So $\tau_ng$ does restrict to a morphism from $z_n\Ga^\mu$ to $T_n$.
Lemma~\ref{eventualinjective} tells us that this morphism is an embedding, for sufficiently large $n$.
Hence eventually $T_n$ has dimension at least $\mu$, which is a contradiction.
\end{proof}

\medskip
\subsection{A canonical base property}
Using jet spaces we obtain a description of canonical bases and deduce therefrom the Zilber dichotomy for finite-dimensional rank one types.
Canonical bases for simple theories were introduced as hyperimaginary elements in~\cite{hartkimpillay}, to which we refer the reader for further details. ere we show that the canonical bases in $\ecdf$ are interalgebraic with infinite sequences of real elements.
As might be expected from our description of types in~$\S$\ref{subsection-types}, the canonical base of a type $\tp(a/L)$ will need to take into account not just the $L$-loci of the $\nabla_n(a)$, but indeed of the $\Theta_r(a)$, where recall that
$$\Theta_r(a):=\big(\theta a:\theta\text{ a word of length $\leq r$ on }\{\partial_1,\dots,\partial_\ell,\sigma_1^{-1},\dots,\sigma_t^{-1}\} \big).$$

\begin{theorem}
\label{cbjet}
Suppose $L$ is an algebraically closed inversive $\cD$-subfield and $a$ is a finite tuple from $\mathbb U$.
Let $\underline Z:=\dlocus(a/L)$ in the sense that $Z_n=\locus(\nabla_n a/L)$ for all $n<\omega$.
For each $r<\omega$, let $\Theta_r\underline Z:=\big(\locus(\nabla_n\Theta_r a)/L)\big)_{n<\omega}$.
Then
$$\cb(a/L)\subseteq\acl\left(\{a\}\cup\bigcup_{m\geq 1,r\geq 0}\jet_{\underline\cD}^m(\Theta_r\underline Z)_{\Theta_ra}(\mathbb U)\right)$$
If $\tp(a/L)$ is finite-dimensional then
$$\cb(a/L)\subseteq\acl\left(\{a\}\cup\bigcup_{m\geq 1,r\geq 0}\jet_{\underline\cD}^m(\nabla_r\underline Z)_{\nabla_r(a)}(\mathbb U)\right)$$
\end{theorem}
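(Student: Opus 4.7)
The strategy is to combine the characterisation of types in Proposition~\ref{types} (which shows that $\tp(a/L)$ is determined by what happens to $\Theta a$) with the jet-space determination result Fact~\ref{jetsdetermine} (which linearises $\underline\cD$-varieties via their jets). Throughout, we use elimination of imaginaries for $\ecdf$, which lets us realise $\cb(a/L)$ as (interalgebraic with) a tuple of real elements of $L$.

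\emph{Step 1: $\cb(a/L)$ is algebraic over the canonical parameters of $\{\Theta_r\underline{Z}:r<\omega\}$.}  The idea is that knowing all the loci $\locus(\nabla_n\Theta_ra/L)$ is equivalent to knowing the quantifier-free $\cD$-type of $\Theta a$ over $L$.  Because each $\sigma_i$ is an $A$-linear combination of the $\partial_j$, this in turn determines $\tp_\sigma(\Theta a/L)$, and by the equivalence (ii)$\Leftrightarrow$(i) in Proposition~\ref{types} it determines $\tp(a/L)$.  Thus any automorphism of $\mathbb U$ fixing every $\Theta_r\underline{Z}$ setwise fixes $\tp(a/L)$ and hence fixes the (real) canonical base; a standard simplicity-plus-EI argument (as in~\cite{acfa1}) converts this into the desired algebraicity statement.

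\emph{Step 2: Each canonical parameter $\lceil\Theta_r\underline{Z}\rceil$ lies in $\acl$ of $a$ together with the jet spaces appearing in the union.}  The tuple $\Theta_ra$ is a generic rational point of the absolutely irreducible $\sigma$-dominant (by Proposition~\ref{genpoints}) $\underline\cD$-variety $\Theta_r\underline{Z}$.  Applying Fact~\ref{jetsdetermine} to $\Theta_r\underline{Z}$ at $\Theta_ra$ shows that $\lceil\Theta_r\underline{Z}\rceil$ is algebraic over $\{\Theta_ra\}$ together with the jets $\jet^m_{\underline\cD}(\nabla_s\Theta_r\underline{Z})_{\nabla_s\Theta_r a}(\mathbb U)$ for all $m\geq 1$ and $s\geq 0$.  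Since $\Theta_r a\in\dcl(a)$, it remains to absorb the $\nabla_s$-shifts of the jet data into the union over $r$ in the theorem.  For this, observe that $\nabla_s\Theta_r a$ is a subtuple (up to reordering) of $\Theta_{r+s}a$ because any composition of a $\partial$-word of length $\leq s$ with a $(\partial,\sigma^{-1})$-word of length $\leq r$ is again a $(\partial,\sigma^{-1})$-word of length $\leq r+s$.  The functoriality of the interpolating map $\phi$ and of $\tau_n$ under such coordinate-projections, together with Proposition~4.7 of~\cite{paperA}, lets one realise $\jet^m_{\underline\cD}(\nabla_s\Theta_r\underline{Z})_{\nabla_s\Theta_r a}(\mathbb U)$ as the image of $\jet^m_{\underline\cD}(\Theta_{r+s}\underline{Z})_{\Theta_{r+s}a}(\mathbb U)$ under an $a$-definable linear map, which is what we need.

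\emph{Step 3: the finite-dimensional case.}  When $\tp(a/L)$ is finite-dimensional, Lemma~\ref{zoetip} gives $\acl(La)=L(\xi a:\xi\in\Xi)^{\alg}$ with $\Xi$ the words in the $\partial_j$ only.  In particular, for every $r$ there exists $N=N(r)$ with $\Theta_r a\subseteq\acl(\nabla_N a)$, so the sequence of loci $\{\Theta_r\underline{Z}\}$ is (locus-wise) algebraic over $\{\nabla_r\underline{Z}\}$ and vice versa up to relabelling; Corollary~\ref{zoecor} also shows that the $\partial$-transcendence data alone controls forking, hence $\tp(a/L)$.  Replaying Steps~1--2 with $\nabla_r\underline{Z}$ in place of $\Theta_r\underline{Z}$ and with $\nabla_r a$ in place of $\Theta_r a$ — now applying Fact~\ref{jetsdetermine} to $\underline{Z}$ itself at $a$ (not to shifted copies) — produces the sharper conclusion stated.

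The main obstacle is the bookkeeping in Step~2: identifying the jets of shifted varieties at shifted points with (projections of) jets appearing in the theorem's union.  Everything else is a relatively direct application of EI, Proposition~\ref{types}, and Fact~\ref{jetsdetermine}, whose heavy lifting has already been done in~\cite{paperA} and~\cite{paperB}.
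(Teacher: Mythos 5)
Your overall architecture coincides with the paper's: reduce the theorem to showing that the inversive $\cD$-subfield $K$ generated by the minimal fields of definition of the loci $\locus(\nabla_n\Theta_r a/L)$ captures $\cb(a/L)$ up to $\acl$ (your Step~1); then show that any automorphism fixing $a$ and the spaces $\jet_{\underline\cD}^m(\Theta_r\underline Z)_{\Theta_ra}(\mathbb U)$ pointwise fixes each locus setwise, by applying Fact~\ref{jetsdetermine} to $\Theta_r\underline Z$ and its image, using exactly your bookkeeping observation that the coordinate projection $\Theta_{r+s}a\mapsto\nabla_s\Theta_ra$ induces a definable surjection from $\jet_{\underline\cD}^m(\Theta_{r+s}\underline Z)_{\Theta_{r+s}a}(\mathbb U)$ onto $\jet_{\underline\cD}^m(\nabla_s\Theta_r\underline Z)_{\nabla_s\Theta_ra}(\mathbb U)$ (your Step~2); and, in the finite-dimensional case, replace $\Theta_r$ by $\nabla_r$ using Lemma~\ref{zoetip} and Corollary~\ref{zoecor} and apply Fact~\ref{jetsdetermine} to $\underline Z$ itself at $a$ (your Step~3). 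Steps~2 and~3 are essentially the paper's proof.

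Your justification of Step~1, however, has a genuine gap. You claim the loci determine the quantifier-free $\cD$-type of $\Theta a$ over $L$, that this determines $\tp_\sigma(\Theta a/L)$, and hence $\tp(a/L)$ by Proposition~\ref{types}. The middle inference fails whenever some associated endomorphism is nontrivial: the quantifier-free difference type does not determine the full difference type --- this is exactly the failure of quantifier elimination recorded in the Quantifier Reduction corollary, and the reason Proposition~\ref{types}(iii) insists on an extension of the isomorphism to $\big(k\langle a\rangle^{\alg},\sigma\big)$. In the special case of ACFA the loci of $(a,\sigma a,\dots)$ over $L$ do not determine $\tp(a/L)$. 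Relatedly, ``an automorphism fixing the loci fixes $\tp(a/L)$, hence the canonical base'' is not the right mechanism in a merely simple theory, where $\cb$ is attached to parallelism classes and amalgamation bases rather than to automorphism-invariance of a type whose domain is being moved. The repair is the standard one (and is what the paper does, and what the \cite{acfa1} argument you invoke actually is): since each $\locus(\Theta_ra/L)$ is defined over $K$, Lemma~\ref{dimfork} (resp.\ Corollary~\ref{zoecor} in the finite-dimensional case) gives $a\ind_KL$, and since the independence theorem holds over algebraically closed sets, $\tp(a/\acl(K))$ is an amalgamation base; together these yield $\cb(a/L)\subseteq\acl(K)$, after which your Steps~2 and~3 finish the proof. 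Note also that you do not need elimination of imaginaries to ``realise'' the canonical base as real elements: the stated containment makes sense for the hyperimaginary canonical base as it stands, and the theorem is itself what produces the real representation.
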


\begin{proof}
Note that $\underline\cD$-locus of a tuple over an algebraically closed inversive $\cD$-field will always be a $\sigma$-dominant absolutely irreducible $\underline\cD$-subvariety.
Hence this is the case for $\underline Z$, $\Theta_r\underline Z$, and $\nabla_r\underline Z$.

Let $K\subseteq L$ be the inversive $\cD$-subfield generated by the minimal fields of definition of all the $\locus(\Theta_ra/L)$, as $r<\omega$ varies.

\begin{claim}
\label{cb}
$\cb(a/L)\subseteq\acl(K)$ and $K\subseteq\dcl\big(\cb(a/L)\big)$
\end{claim}
\begin{proof}[Proof of Claim~\ref{cb}]
By Lemma~\ref{dimfork} we have that $a\ind_KL$.
We also know that $\tp(a/K^{\alg})$ is an amalgamation base because the independence theorem holds over algebraically closed sets.
It follows that $\cb(a/L)$ is in the definable closure of $K^{\alg}$ and hence in the algebraic closure of $K$.

For the other containment, let $L_0=\dcl\big(\cb(a/L)\big)$.
Then, as $a\ind_{L_0}L$, we have that $\locus(\Theta_ra/L)=\locus(\Theta_ra/L_0)$, for each $r<\omega$.
Hence the minimal field of definition of each $\locus(\Theta_ra/L)$ is a subfield of $L_0$.
It follows that $K$, which is in the definable closure of these minimal fields of definition, must also be contained in $L_0$, as desired.
, so $K\subseteq L_0$.
\end{proof}

Given the above claim it suffices to show that $K$ is in the definable closure of $\displaystyle \{a\}\cup\bigcup_{m\geq 1,r\geq 0}\jet_{\underline\cD}^m(\Theta_r\underline Z)_{\Theta_ra}(\mathbb U)$.
Suppose that $\alpha$ is an automorphism that fixes $a$ and all the $\jet_{\underline\cD}^m(\Theta_r\underline Z)_{\Theta_ra}(\mathbb U)$ pointwise.
Then $\alpha$ takes $L$ to $L':=\alpha(L)$ and it takes $\Theta_r\underline Z$ to the $\underline\cD$-suvariety $\underline Y^{(r)}:=\big(\locus(\nabla_n\Theta_r a)/L')\big)_{n<\omega}$.
Note that $\Theta_ra$ is generic in $\Theta_r\underline Z$ over $L$ and in $\underline Y^{(r)}$ over $L'$.
Note also that
$$\jet_{\underline\cD}^m(\nabla_s\Theta_r\underline Z)_{\nabla_s(\Theta_ra)}(\mathbb U)=\jet_{\underline\cD}^m(\nabla_s\underline Y^{(r)})_{\nabla_s(\Theta_ra)}(\mathbb U)$$
for all $m\geq 1$ and $s\geq 0$.
Indeed, there is a co-ordinate projection taking $\Theta_{r+s}(a)$ to $\nabla_s(\Theta_ra)$ that will induce a definable surjection from $\jet_{\underline\cD}^m(\Theta_{r+s}\underline Z)_{\Theta_{r+s}(a)}(\mathbb U)$ to $\jet_{\underline\cD}^m(\nabla_s\Theta_r\underline Z)_{\nabla_s(\Theta_ra)}(\mathbb U)$, and since the former is fixed pointwise by $\alpha$, so is the latter.
It follows by Fact~\ref{jetsdetermine}, then, that $\Theta_r\underline Z=\underline Y^{(r)}$.
In particular, $\alpha$ fixes $\locus(\Theta_ra/L)$ for all $r\geq 0$.
Hence $\alpha\upharpoonright_K=\id$, as desired.

In the finite-dimensional case we can carry out the same argument but with $\nabla_r$ instead of $\Theta_r$.
That is, we set $K$ to be the inversive $\cD$-field generated by the minimal fields of definition of $\locus(\nabla_ra/L)$, as $r$ varies.
Using Corollary~\ref{zoecor} instead of Lemma~\ref{dimfork}, the above argument goes through with this $K$, and we get the desired description of the canonical base in the finite-dimensional case.
\end{proof}

The following is a generalisation of Theorems~1.1 and~1.2 of~\cite{pillayziegler03}.

\begin{corollary}[The canonical base property for finite-dimensional types]
\label{cbp}
Suppose $\tp(a/k)$ is finite-dimensional and $L$ is an algebraically closed inversive $\cD$-field extending $k$ such that $\cb(a/L)$ is interalgebraic with $L$ over $k$.
Then $\tp(L/k\langle a\rangle)$ is almost internal to the constants.
\end{corollary}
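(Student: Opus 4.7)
The plan is to reduce the statement, via the hypothesis of interalgebraicity, to the jet space description of canonical bases given by Theorem~\ref{cbjet}, and then to exploit Proposition~\ref{smalljet} together with the $C$-linear structure of the jet spaces.

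First I would verify that the finite-dimensionality hypothesis on $\tp(a/k)$ forces the algebraic varieties $Z_n:=\locus(\nabla_n a/L)$ to be of bounded dimension as $n$ varies. Indeed, by Lemma~\ref{zoetip}, $\acl(ka)$ is the field-theoretic algebraic closure of the $\cD$-field generated by $a$ over $k$, which by hypothesis has finite transcendence degree over $k$. Since $\nabla_n(a)$ is a tuple of iterated $\partial_i$-derivatives of $a$, we have $\trdeg(\nabla_n a / L) \le \trdeg(\nabla_n a / k)$, and the latter is bounded. The same boundedness applies to the $\underline\cD$-varieties $\nabla_r\underline Z=(Z_{r+n})_{n<\omega}$ for any $r\geq 0$.

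Next, Proposition~\ref{smalljet} then guarantees that for each $m\geq 1$ and $r\geq 0$ the jet space $J_{m,r}:=\jet^m_{\underline\cD}(\nabla_r\underline Z)_{\nabla_r(a)}(\mathbb U)$ is a finite-dimensional $C$-vector subspace of $\jet^m(\nabla_r X)_{\nabla_r(a)}(\mathbb U)$. The finite-dimensional case of Theorem~\ref{cbjet} then gives
\[
\cb(a/L)\subseteq\acl\!\left(\{a\}\cup\bigcup_{m\geq 1,\,r\geq 0}J_{m,r}\right).
\]
Combining this with the hypothesis that $L$ is interalgebraic with $\cb(a/L)$ over $k$, we obtain $L\subseteq\acl\big(k\langle a\rangle\cup\bigcup_{m,r}J_{m,r}\big)$. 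For each $(m,r)$, a $C$-basis $\bar b_{m,r}$ of $J_{m,r}$ gives $J_{m,r}\subseteq\dcl(\bar b_{m,r},C)$, hence
\[
L\subseteq\acl\!\left(k\langle a\rangle\cup\textstyle\bigcup_{m,r}\bar b_{m,r}\cup C\right).
\]
This exhibits $L$ as being in the algebraic closure of $k\langle a\rangle$ together with (a possibly infinite tuple of) parameters and the field of constants, which is the algebraic content of almost $C$-internality.

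The main obstacle I anticipate is the verification of the independence clause in the definition of almost internality: one must produce a parameter set $\bar B\supseteq k\langle a\rangle$ with $\bar B\ind_{k\langle a\rangle}L$ and $L\subseteq\acl(\bar B\cup C)$. The jet spaces $J_{m,r}$ themselves are defined over $\acl(L\langle a\rangle)$, so a naive choice of basis lies in $\acl(La)$ and is not independent from $L$ over $k\langle a\rangle$. To remedy this, I would invoke the extension axiom for nonforking in the simple theory $\ecdf$: within each finite-dimensional $C$-vector space $J_{m,r}$ (equivalently, within the variety of $C$-bases of $J_{m,r}$), pick a basis realising a nonforking extension of its type over $k\langle a\rangle$ to the parameter set $L$. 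Such a basis $\bar b_{m,r}$ still spans $J_{m,r}$ over $C$ but is independent from $L$ over $k\langle a\rangle$. Taking $\bar B$ to be the union of these bases (together with $k\langle a\rangle$), finite character of independence yields $\bar B\ind_{k\langle a\rangle}L$, while $L\subseteq\acl(\bar B\cup C)$ by the computation above. This completes the verification that $\tp(L/k\langle a\rangle)$ is almost internal to $C$.
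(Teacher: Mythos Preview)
Your outline follows the right strategy and correctly invokes Theorem~\ref{cbjet} and Proposition~\ref{smalljet}, but there is a genuine gap in the independence step. The jet spaces $J_{m,r}=\jet^m_{\underline\cD}(\nabla_r\underline Z)_{\nabla_r(a)}(\mathbb U)$ are type-definable only over $L\langle a\rangle$, not over $k\langle a\rangle$, because $\underline Z$ is the $L$-locus. Your proposed use of the extension axiom would produce a tuple $\bar b'$ with the same type over $k\langle a\rangle$ as some basis $\bar b$ of $J_{m,r}$ and with $\bar b'\ind_{k\langle a\rangle}L$, but being a basis of $J_{m,r}$ is an $L$-condition, so there is no reason $\bar b'$ still lies in $J_{m,r}$. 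Concretely, if $J_{m,r}$ were the $C$-line through $(1,\ell)$ for some $\ell\in L\setminus\acl(k\langle a\rangle)$, then from any nonzero point $(c,c\ell)$ one recovers $\ell$, so no basis of this line can be independent from $L$ over $k\langle a\rangle$.

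The paper's proof circumvents this by introducing the ambient $k$-locus $\underline X:=\dlocus(a/k)$ alongside $\underline Z$. The jet spaces $\jet^m_{\underline\cD}(\nabla_r\underline X)_{\nabla_r(a)}(\mathbb U)$ are then type-definable over $k\langle a\rangle$, so one may legitimately choose a countable set $B$ of $C$-bases for all of them with $B\ind_{k\langle a\rangle}L$. Finite-dimensionality (via Proposition~\ref{smalljet}) is applied to $\underline X$, not to $\underline Z$. The link back to Theorem~\ref{cbjet} is the containment $J_{m,r}\subseteq\jet^m_{\underline\cD}(\nabla_r\underline X)_{\nabla_r(a)}(\mathbb U)$, which follows from $Z_n\subseteq X_n$ for all $n$; hence $B$ together with $C$ already spans each $J_{m,r}$, and one concludes $L\subseteq\acl(k\langle a\rangle\cup B\cup C)$ as required.
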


\begin{proof}
Let $\underline X:=\dlocus(a/k)$ and $\underline Z:=\dlocus(a/L)$.
The finite-dimensionality of $\tp(a/k)$ implies in particular that for each $r<\omega$, $\dim\locus(\nabla_n\nabla_ra/k)$ is bounded independently of $n$, and so by Proposition~\ref{smalljet}, for each $m\geq 1$, $\jet^m_{\underline\cD}(\nabla_r\underline X)_{\nabla_r(a)}(\mathbb U)$ is a finite dimensional $C$-vector space.
Let $B$ be a countable set that contains a $C$-basis for $\jet^m_{\underline\cD}(\nabla_r\underline X)_{\nabla_r(a)}(\mathbb U)$, for all $m$ and $r$.
As these jet  spaces are type-definable over $k\langle a\rangle$, we can choose $B$ so that $B\ind_{k\langle a\rangle}L$.
Then
\begin{eqnarray*}
L
&\subseteq&
\acl\left(k\langle a\rangle\cup\bigcup_{m\geq 1,r\geq 0}\jet_{\underline\cD}^m(\nabla_r\underline Z)_{\nabla_ra}(\mathbb U)\right) \ \ \ \ \ \ \text{ by Theorem~\ref{cbjet}}\\
&\subseteq&
\acl\left(k\langle a\rangle\cup B\cup C\right) \ \ \ \ \ \ \text{ as the $\jet^m_{\underline\cD}(\nabla_r\underline Z)_{\nabla_r(a)}(\mathbb U) \ \leq \ \jet^m_{\underline\cD}(\nabla_r\underline X)_{\nabla_r(a)}(\mathbb U)$}
\end{eqnarray*}
So $\tp(L/k\langle a\rangle)$ is almost internal to the constant field $C$, as desired.
\end{proof}

\begin{corollary}[The Zilber dichotomy for finite-dimensional types]
\label{dichotomy}
Suppose $p$ is a finite-dimensional $\operatorname{SU}$-rank one type over a substructure of some model of $\ecdf$.
Then $p$ is either one-based or almost internal to the constants.
\end{corollary}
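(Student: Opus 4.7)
The plan is to derive the dichotomy from the canonical base property already established in Corollary~\ref{cbp}, following the classical strategy of Pillay and Ziegler. Work in the sufficiently saturated $(\mathbb U,\partial)\models\ecdf$ and let $p=\tp(a/k)$ be a finite-dimensional type of $\operatorname{SU}$-rank one. Assuming $p$ is not one-based, I will show $p$ is almost internal to the constant field $C$.

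Non-one-basedness supplies an algebraically closed inversive $\cD$-subfield $L\supseteq k$ with $c:=\cb(a/L)\not\subseteq\acl^{\eq}(ka)$; in particular $c\not\in\acl^{\eq}(k)$, so $a\nind_k L$. Since $\operatorname{SU}(a/k)=1$, this forking extension must be algebraic, so $a\in\acl(L)$. Using elimination of imaginaries (Section~\ref{sectionei}), I may represent $c$ by a real tuple and replace $L$ by the algebraic closure of the inversive $\cD$-field generated by $k$ together with $c$; the new $L$ is again an algebraically closed inversive $\cD$-subfield, $\cb(a/L)$ still equals $c$, and now $L$ is interalgebraic with $c$ over $k$. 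Next, using supersimplicity together with the finite-dimensionality of $p$, I take a Morley sequence $a=a_1,a_2,\dots,a_N$ of the non-forking extension of $p$ to $L$ with $N$ large enough that $c\in\acl^{\eq}(k,a_2,\dots,a_N)$; this is possible because the canonical base of a type is coded by any long enough tail of a Morley sequence. Writing $\bar a':=(a_2,\dots,a_N)$, one obtains that $\cb(\bar a'/L)$ is interalgebraic with $L$ over $k$.

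Since $\tp(\bar a'/k)$ is finite-dimensional, being an independent product of realisations of the finite-dimensional type $p$, Corollary~\ref{cbp} applies to $\bar a'$ and $L$ and yields a set $B\supseteq k\langle\bar a'\rangle$ with $L\ind_{k\langle\bar a'\rangle} B$ and $L\subseteq\acl(B\cup C)$. Combining $a\in\acl(L)$ with $L\subseteq\acl(B\cup C)$ gives $a\in\acl(B\cup C)$. For the required independence, note that $a\ind_k\bar a'$ by the Morley property, while $a\in\acl(L)$ together with $L\ind_{k\langle\bar a'\rangle} B$ gives $a\ind_{k\bar a'} B$; transitivity then yields $a\ind_k B$. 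Therefore $\tp(a/k)$ is almost internal to $C$, as required.

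The main technical nuisance I expect is the bookkeeping in the Morley-sequence step: one must simultaneously realise the canonical base of $\tp(a/L)$ as algebraic over the Morley sequence and arrange that the distinguished realisation $a_1=a$ is \emph{not} needed for this coding, so that $a$ remains independent from the parameter set $B$ extracted from Corollary~\ref{cbp}. This is the standard role played by supersimplicity and finite-dimensionality in Pillay--Ziegler style arguments; without the latter, Corollary~\ref{cbp} is unavailable and the deduction breaks down, which is precisely why the theorem is restricted to the finite-dimensional case.
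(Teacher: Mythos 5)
There is a genuine gap at the Morley-sequence step, and it is not mere bookkeeping: the two properties you demand of the tail $\bar a'=(a_2,\dots,a_N)$ are mutually incompatible. As written, the $a_i$ realise a Morley sequence of the \emph{non-forking} extension of $p$ to $L$; but then $\bar a'\ind_k L$, and since $c\in\acl^{\eq}(L)$, the desired coding $c\in\acl^{\eq}(k\bar a')$ would give $c\ind_k L$ and hence $c\in\acl^{\eq}(k)$, contradicting $c\notin\acl^{\eq}(ka)$. So that coding is impossible; the fact you invoke (canonical bases are captured by Morley sequences) applies to Morley sequences in $\tp(a/L)$ \emph{itself}, whose elements here all lie in $\acl(L)$ and fork with $L$ over $k$. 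If you reread your $a_i$ that way, the coding becomes available but the other pillar collapses: from $c\in\acl^{\eq}(k\bar a')$ and the claimed $a\ind_k\bar a'$ you would get $a\ind_k c$, and since $a\ind_{kc}L$ (because $c=\cb(a/L)$ and $L=\acl(k\langle c\rangle)$), transitivity would give $a\ind_k L$, contradicting the choice of $L$. In short, any set $B\supseteq k\bar a'$ over which $L$ becomes algebraic necessarily forks with $a$ over $k$ (indeed here $a\in\acl(L)\subseteq\acl(k\bar a')\subseteq\acl(B)$), so your concluding configuration ``$a\in\acl(B\cup C)$ with $a\ind_k B$'' can never be realised with the original $a$: the internality statement you produce is vacuous. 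This is precisely why one cannot use the realisation that is entangled with the canonical base as the witness of internality.

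The repair is the paper's (Pillay--Ziegler) twist, which your write-up skips. Apply the CBP to $a$ itself: $\tp(L/k\langle a\rangle)$ is almost $C$-internal, so $L\subseteq\acl(kBac)$ with $L\ind_{ka}B$ and $c$ a tuple of constants. Since $\tp(L/ka)$ is nonalgebraic and $L\ind_{ka}B$, the type $\tp(L/kBa)$ is nonalgebraic, whence $L\nind_{kBa}c$. Now write $L\subseteq\acl(ka_1\dots a_n)$ with $a_1,\dots,a_n$ realisations of $p$ chosen independent from $Ba$ over $k$; transferring the forking to this tuple gives $(a_1,\dots,a_n)\nind_{kBa}c$, so for some $i$ one has $a_{i+1}\in\acl(kBaa_1\dots a_ic)$ by $\operatorname{SU}(p)=1$, while $a_{i+1}\ind_k Baa_1\dots a_i$ by choice. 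The witness of almost internality is this fresh realisation $a_{i+1}$, not the original $a$; your argument has no analogue of this step, and without it the deduction from Corollary~\ref{cbp} does not go through.
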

\begin{proof}
Work in a saturated model $(\mathbb U,\partial)$ of $\ecdf$, and suppose $p=\tp(a/k)$ for some inversive $\cD$-field $k$.
If $p$ is not one-based then there exists an algebraically closed inversive $\cD$-field $L$ extending $k$ such that $\cb(a/L)$ is interalgebraic with $L$ over $k$, but $\tp(L/ka)$ is nonalgebraic.
Since $p$ is finite-dimensional the CBP (Corollary~\ref{cbp}) applies and we have that $\tp(L/ka)$ is almost $C$-internal.
So $L\subseteq \acl(kBac)$ where $L\ind_{ka}B$ and $c$ is a tuple from $C$.
Hence $L\nind_{kBa}c$.
On the other hand, being (interalgebraic over $k$ with) the canonical base, $L\subseteq \acl(ka_1\dots a_n)$ for some independent realisations $a_1,\dots, a_n$ of $p$,
which we may assume to be independent of $Ba$ over $k$.
So $(a_1,\dots,a_n)\nind_{kBa}c$, and hence for some $i<n$, $a_{i+1}\in\acl(kBaa_1\dots a_ic)$.
But as $a_{i+1}\ind_kBaa_1\dots a_i$ by choice,
this witnesses that $p$ is almost internal to $C$.
\end{proof}

\begin{remark}
We do not know if the restriction of the above corollary to finite-dimensional types is necessary.
In the case of partial differential fields, by which we mean differentially closed field of
characteristic zero for finitely many commuting derivations, it follows from the analysis of regular non one based types in~\cite{MPS} that those of rank one are in fact finite-dimensional.
(This was observed first in the difference-differential case by Bustamante~\cite{bustamante}.)
We expect the same to hold here.
\end{remark}

\bigskip
\section{Appendix: On the assumptions}

\noindent
In proving the existence of the model companion we restricted ourselves to characteristic zero, and we also imposed on $A$ the properties described in
Assumptions~\ref{assumptionA}.
In this appendix, we discuss  the extent to which these restrictions are necessary.

\medskip
\subsection{No model companion in positive characteristic}
To begin with, in most cases the restriction to characteristic zero is necessary.
While model companions are known to exist in positive characteristic for the differential and difference cases,
at the level of generality considered in this paper model companions do not necessarily exist in characteristic $p>0$.
We will prove this by showing that the condition of having a $p$th root in some $\cD$-field extension is not in general first-order.

For the time being Assumptions~\ref{assumptionA} remain in place.

\begin{proposition}
\label{pthrootcriterion}
Suppose $(K,\partial)$ is a $\cD$-field of characteristic $p>0$ and $a\in K$.
Then the following are equivalent:
\begin{itemize}
\item[(i)]
There is a $\cD$-field extension of $K$ in which $a$ has a $p$th root.
\item[(ii)]
For each $n<\omega$, $E_n(a)\in\cD_n^p(K)$.
Here $\cD_n^p$ is the $\mathbb S$-algebra scheme that is the image of $\cD_n$ under the $\mathbb S$-algebra morphism of raising to the power $p$.
\end{itemize}
\end{proposition}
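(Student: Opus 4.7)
For (i) $\Rightarrow$ (ii), the argument is direct. Suppose $L \supseteq K$ is a $\cD$-field extension and $b \in L$ satisfies $b^p = a$. Since $E_n: L \to \cD_n(L)$ is a ring homomorphism, applying it to $b^p = a$ yields $E_n(a) = E_n(b)^p$, placing $E_n(a)$ in the image of the $p$-th-power map on $\cD_n(L)$ and in particular in $\cD_n^p(L)$. Because $\cD_n^p \hookrightarrow \cD_n$ is a closed immersion of schemes, the identity $\cD_n^p(K) = \cD_n(K) \cap \cD_n^p(L)$ holds inside $\cD_n(L)$; combined with $E_n(a) \in \cD_n(K)$ this yields $E_n(a) \in \cD_n^p(K)$.

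For the converse, I would construct the extension explicitly. If $a \in K^p$ there is nothing to prove; otherwise set $L := K[T]/(T^p - a)$ and let $b \in L$ be the image of $T$, so that $L$ is purely inseparable of degree $p$ over $K$ with $b^p = a$. By the uniqueness clause of Proposition~\ref{etoE}, to promote $L$ to a $\cD$-field extension of $(K,\partial)$ it suffices to extend $e: K \to \cD(K)$ to an $A$-algebra section $e': L \to \cD(L)$ of $\pi^L$; the iterative $\underline\cD$-structure then follows automatically. Specifying $e'$ reduces to exhibiting a single element $c \in \cD(L)$ satisfying $c^p = e(a)$ and $\pi(c) = b$, from which one defines $e'(b) := c$.

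Producing such $c$ is the core of the converse. Writing $c = \sum_i c_i \epsilon_i$ in the standard basis, the freshman's dream in characteristic $p$ converts $c^p = e(a)$ into the $A$-linear system $\sum_i c_i^p\, \epsilon_i^p = e(a)$ in $\cD(K)$; the constraint $\pi(c) = b$ fixes $c_0 = b$ and hence $c_0^p = a$. The hypothesis $e(a) \in \cD^p(K)$ ensures consistency of the remaining system in $(c_i^p)_{i \geq 1}$ over $K$, giving a candidate $c$ whose coordinates lie a priori in $K^{\alg}$. Since $c_i \in L$ is equivalent to $c_i^p \in L^p = K^p[a]$ (using that $L$ is generated over $K$ by $b$ with $b^p = a \in K$, together with the fact that $K^{\alg}$ is reduced so $p$-th roots are unique), what remains is to arrange for the values $c_i^p$ to lie in $L^p$ rather than merely in $K$.

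This descent from $K$ down to $L^p$ is the main technical obstacle, and it is precisely where the higher hypotheses $E_n(a) \in \cD_n^p(K)$ for $n \geq 2$ intervene. They assemble into a compatible tower of formal $p$-th roots along the projective system $\cD_n \to \cD_{n-1}$, and unwinding these higher compatibility conditions forces the coordinates of $c$ at the base level to satisfy $c_i^p \in K^p[a]$, whence $c$ actually lies in $\cD(L)$. Once $e'$ has been defined, Proposition~\ref{etoE} promotes it to the unique iterative $\underline\cD$-structure on $L$ extending that of $K$, completing the construction of a $\cD$-field extension in which $a$ has the $p$-th root $b$.
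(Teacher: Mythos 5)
Your (i)$\Rightarrow$(ii) direction is fine and is essentially the paper's argument. The converse, however, has a genuine gap, and in fact the route itself fails: you take $L=K[T]/(T^p-a)$ and correctly reduce to producing $c\in\cD(L)$ with $c^p=e(a)$ and $\pi(c)=b$, but such a $c$ need not exist in $\cD(L)$ -- only in $\cD(K^{\alg})$ -- and your final claim that the higher hypotheses $E_n(a)\in\cD_n^p(K)$ ``force'' the coordinates to satisfy $c_i^p\in K^p[a]$ is asserted without any argument and is false. Concretely, take $p=2$, $\cD(R)=R[\eta]/(\eta^3)$ (truncated higher derivations of length $2$), $K=\mathbb F_2(a,u)$ with $a,u$ algebraically independent, $\partial_1=0$ and $\partial_2$ the derivation determined by $\partial_2(a)=u$, $\partial_2(u)=0$. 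Then for every $n$ the only nonzero coordinates of $E_n(a)$ are $a$ (at the empty word) and $u$ (at the word $(2)$), and the element of $\cD_n(K^{\alg})$ whose only nonzero coordinates are $a^{1/2}$ at the empty word and $u^{1/2}$ at the word $(1)$ squares to $E_n(a)$ (in characteristic $2$ the cross terms vanish), so condition (ii) holds for all $n$. Yet every square root of $e(a)=a+u\eta^2$ in $\cD(K^{\alg})$ has the form $a^{1/2}+u^{1/2}\eta+c_2\eta^2$, and $u^{1/2}\notin K(a^{1/2})=\mathbb F_2(a^{1/2},u)$. So no $\cD$-ring structure on $K(a^{1/2})$ extends that of $K$: no unwinding of the higher conditions can place the root inside $\cD(L)$ for your choice of $L$.

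What the paper does instead is enlarge the field as much as is needed. After passing to an elementary extension so that $K$ is $\aleph_0$-saturated, one uses saturation to choose $p$th roots $b_n\in\cD_n(K^{\alg})$ of $E_n(a)$ that are \emph{compatible} along the projections $f_n:\cD_n\to\cD_{n-1}$, writes $b_n=\sum_{\alpha\in L_n}b^{(\alpha)}\epsilon_\alpha$, and takes $L$ to be $K$ together with \emph{all} of the coordinates $b^{(\alpha)}$, $\alpha\in L_{<\omega}$; the extension of $e$ is then defined on these generators by the index-shifting formula $e'(b^{(\alpha)}):=\sum_{j=0}^{\ell-1}b^{(j^{\smallfrown}\alpha)}\epsilon_j$, which is multiplicative because each $E_n':K(b_0)\to\cD_n(K^{\alg})$, $b_0\mapsto b_n$, is an $A$-algebra homomorphism, and $b_0$ is the desired $p$th root. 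The role of the hypotheses for $n\geq 2$ is thus not to descend a single root of $e(a)$ to $K(a^{1/p})$, but to guarantee a coherent infinite tower of roots whose coordinates generate the (typically much larger) extension on which the operators can be defined; your proposal is missing exactly this construction.
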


\begin{proof}
First of all, note that $E_n(a)\in\cD_n^p(K)$ if and only if $E_n(a)$ is a $p$th power in $\cD_n(K^{\alg})$, if and only if $E_n(a)$ is a $p$th power in $\cD_n(L)$ for some field extension $L$ of $K$.

(i)$\implies$(ii).
Let $(L,\partial)$ extend $(K,\partial)$ with $b\in L$ such that $b^p=a$.
Then
$$E_n(b)^p=E_n(b^p)=E_n(a)$$
in $\cD(L)$ showing that $E_n(a)$ is a $p$th power for each $n<\omega$, as desired.

(ii)$\implies$(i).
Passing to an elementary extension if necessary, we may assume that $K$ is $\aleph_0$-saturated as a field.
Now for each $n$ there exists $b_n\in\cD_n(K^{\alg})$ such that $b_n^p=E_n(a)$.
Since the minimal polynomial of $b_0$ over $K$ is $x^p-a$, we can extend $E_n:K\to\cD_n(K)$ to an $A$-algebra homomorphism $E_n':K(b_0)\to\cD_n(K^{\alg})$ by setting $E_n'(b_0):=b_n$.

Now, note then that $f_n(b_n)^p=E_{n-1}(a)$, and so by  $\aleph_0$-saturation, we may assume that $f_n(b_n)=b_{n-1}$ for each $n>0$.
In other words, there exists $\{b^{(\alpha)}:\alpha\in L_{<\omega}\}\subseteq K^{\alg}$ such that
$$b_n=\sum_{\alpha\in L_n}b^{(\alpha)}\epsilon_\alpha$$
where $(\epsilon_{\alpha}:\alpha\in L_n)$ is the basis for $\cD_n(A)$ over $A$ induced by $\psi_n$ from the standard basis for $A^{L_n}$.
Let $L=K\big(b^{(\alpha)}\big)_{\alpha\in L_{<\omega}}$ and extend $e:K\to\cD(K)$ to $L$ by
$$e'(b^{(\alpha)}):=\sum_{j=0}^{\ell-1}b^{(j^{\smallfrown}\alpha)}\epsilon_j$$
That this is an $A$-algebra  homomorphism extending $e$ follows from the fact that the $E_n'$ defined above were $A$-algebra homomorphisms extending $E_n$.
That $\pi\circ e=\id$ is clear from construction.
We have thus given $L$ a $\cD$-field structure extending $(K,\partial)$, and we have a $p$th root of $a$ in $L$, namely $b_0$.
\end{proof}

Using Proposition~\ref{pthrootcriterion} we show that under a weak hypothesis on $\cD(A)$, the theory of $\cD$-fields does not have a model companion.

\begin{proposition}
\label{nomcp}
If $p$ is a prime and there is some nilpotent $\epsilon \in \cD(A)$ with $\epsilon^p \neq 0$,
then the theory of $\cD$-fields of characteristic $p$ does not have a model companion.

In particular, the class of fields of characteristic $p$ equipped with a truncated higher derivation of length greater than $p$ does not have a model companion--  see Example~\ref{examples}(b).
\end{proposition}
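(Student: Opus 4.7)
The plan is to derive a contradiction via a compactness argument built on Proposition~\ref{pthrootcriterion}: if a model companion $T^*$ of the theory $T_\cD$ of $\cD$-fields of characteristic $p$ existed, then the property ``$x$ has a $p$-th root in some $\cD$-field extension'' would collapse in $T^*$ to a finite quantifier-free condition, which can then be refuted.

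First, for each $n<\omega$ the condition $E_n(x)\in\cD_n^p(K)$ is expressible by a quantifier-free $\cL_\cD$-formula $\phi_n(x)$: the $K$-points of $\cD_n^p$ form the $K$-linear span of $\{\epsilon_\alpha^p:\alpha\in L_n\}$ in $\cD_n(K)$, so the condition becomes a conjunction of $A$-linear equations in the iterated derivatives $\partial_\alpha(x)$, and these are $\cL_\cD$-terms. By Proposition~\ref{pthrootcriterion}, an element $a$ in a $\cD$-field $K$ has a $p$-th root in some $\cD$-field extension iff $K\models\bigwedge_n\phi_n(a)$.

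Suppose now for contradiction that $T^*$ is a model companion of $T_\cD$. If $(M,\partial)\models T^*$ and $a\in M$, then existential closedness of $M$ in the class of $\cD$-fields gives $M\models\exists y(y^p=a)$ iff $a$ has a $p$-th root in some $\cD$-field extension of $M$, iff $\bigwedge_n\phi_n(a)$ holds in $M$. By compactness, there is some $N<\omega$ with
$$T^*\cup\{\phi_0(c),\dots,\phi_N(c)\}\vdash\exists y(y^p=c). \qquad(\star)$$
To refute this, it suffices to construct a $\cD$-field $(K,\partial)$ of characteristic $p$ and $a\in K$ satisfying $\phi_n(a)$ for $n\leq N$ yet having no $p$-th root in any $\cD$-field extension of $K$. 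Indeed, embedding $(K,\partial)$ into some $(M,\partial)\models T^*$, the quantifier-freeness of the $\phi_n$ preserves them upwards, so $(\star)$ forces $a$ to have a $p$-th root in $M$; but $M$ is a $\cD$-field extension of $K$, contradicting the choice of $a$.

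The main obstacle is carrying out this construction. The key input is the hypothesis that $\cD(A)$ contains a nilpotent $\epsilon$ with $\epsilon^p\neq 0$, which provides enough non-reduced structure to ensure that $\cD_n^p(K)$ is a proper subspace of $\cD_n(K)$ at the relevant levels. The strategy is to take $K$ to be a transcendental extension of $A$ in sufficiently many variables, and to use the freeness of the operators (Remark~\ref{freeness}) to specify the values $\partial_\alpha(a)$ so that the coordinates of $E_n(a)$ lie in the $K$-span of $\{\epsilon_\alpha^p:\alpha\in L_n\}$ for $n\leq N$ while at least one coordinate of $E_{N+1}(a)$ picks up a nonzero component outside this span, forcing $\phi_{N+1}(a)$ to fail. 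The delicate point is to verify that such a prescription of the $\partial_\alpha(a)$ is consistent with the multiplicativity constraints~(\ref{multrule}) imposed by the $\cD$-ring axioms.
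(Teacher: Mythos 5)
Your reduction to a counterexample is exactly the paper's frame: the conditions $E_n(x)\in\cD_n^p$ are quantifier-free linear conditions on iterated operators, Proposition~\ref{pthrootcriterion} identifies the partial type $\{\phi_n\}$ with $\exists y\, (y^p=x)$ in existentially closed $\cD$-fields, compactness gives a finite bound $N$, and a $\cD$-field containing an element satisfying $\phi_0,\dots,\phi_N$ but with no $p$-th root in any extension yields the contradiction. All of that is correct. But the proof stops precisely where the proposition's real content begins: you never construct such a $\cD$-field, and you yourself flag the consistency of the prescribed values $\partial_\alpha(a)$ with the multiplicativity constraints as an unresolved ``delicate point.'' This is a genuine gap, not a routine verification, because it is the only place the hypothesis on $\epsilon$ actually does work; the vague assertion that the hypothesis makes $\cD_n^p(K)$ a proper subspace of $\cD_n(K)$ does not by itself produce an element of an honest $\cD$-field whose first $N$ conditions hold while a later one fails.

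Concretely, what is missing (and what the paper supplies) is the following. First, one needs an element $\eta$ of the nilradical $\mathcal N(A)$ of $\cD(A)$ with $\eta\notin\cD(A^{\alg})^p$; this comes from the observation that Frobenius is nilpotent on $\mathcal N$, so its kernel there has positive dimension and its image has strictly smaller dimension. Second, rather than trying to prescribe infinitely many values $\partial_\alpha(a)$ and then argue consistency, one uses a \emph{finite} chain: on $L=A(x_1,\dots,x_m)$ with $m=N+1$ set $e(x_i)=x_i+x_{i+1}\epsilon^p$ for $i<m$ and $e(x_m)=x_m+\eta$. Because every higher component is nilpotent, the associated endomorphisms are the identity, so $e$ extends from the polynomial ring (where freeness makes the prescription automatically consistent) to the rational function field -- this is the device that dissolves your ``delicate point,'' and your sketch offers no substitute for it, nor any way to terminate the prescription or to guarantee extension to a field. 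Third, one still has to compute: using the symmetrized tensors $\sum_{\tau\in\binom{n}{j}}b_\tau\in\cD_n(A)$ and the characteristic-$p$ identity $(u+v)^p=u^p+v^p$, one checks $E_n(x_1)\in\cD_n^p(L)$ for $n<m$, while $E_m(x_1)$ contains the term $\epsilon^p\otimes\cdots\otimes\epsilon^p\otimes\eta$, which lies outside $\cD_m^p(L)$ exactly because $\epsilon^p\neq 0$ and $\eta$ is not a $p$-th power. Without these three steps the argument does not go through, so as it stands the proposal establishes only the (shared, and comparatively easy) compactness scaffolding.
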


\begin{proof}
Suppose the theory
of $\cD$-fields does have a model companion, $T$, and seek a contradiction.
By Proposition~\ref{pthrootcriterion}, in an
existentially closed $\cD$-field the partial type $\Phi(x):= \{ E_n(x) \in \cD_n^p \}_{n=0}^\infty$ is equivalent to the
formula $(\exists y) y^p = x$.
As every model of $T$ is existentially closed, this equivalence is entailed by $T$.
We get by compactness that there is some $\ell\geq 0$ such that in every model of $T$, $\bigwedge_{n=0}^\ell(E_n(x) \in \cD_n^p)$ implies $\Phi(x)$.
But as these formulas are quantifier-free, and every $\cD$-field embeds into a model of $T$, we have that in every $\cD$-field $(L,\partial)$
\begin{equation}
\label{bound}
\bigwedge_{n=0}^\ell E_n(x) \in \cD_n^p(L) \ \implies \ \bigwedge_{n=0}^\infty E_n(x) \in \cD_n^p(L)
\end{equation}
We will construct a counterexample to this claim.

Since $\cD$ is an $\mathbb S$-algebra, it is itself of characteristic $p$.  Hence, the Frobenius defines a morphism of
ring schemes $F:\cD \to \cD$ given on points as $F:\cD(R) \to \cD(R)$ via $a \mapsto a^p$ where the $p^\text{th}$
power is taken with respect to the multiplication in $\cD$.  Let ${\mathcal N}$ be the nilradical of
$\cD$ considered as a subgroup scheme of $(\cD,+)$.  Visibly, ${\mathcal N}$ is mapped back to itself by $F$.
Since the nilradical of $\cD(A)$ is nontrivial, the kernel of $F:{\mathcal N} \to {\mathcal N}$
has positive dimension. Hence, its image has dimension strictly less than $\dim {\mathcal N}$.  Let
$\eta \in {\mathcal N}(A) \smallsetminus  \cD(A^\text{alg})^p$.  

Let $m=\ell+1$ and let $L := A(x_1,\ldots,x_m)$
be the field of rational functions in $m$ variables over $A$.  Define $e:A[x_1,\ldots,x_m] \to \cD(L)$ by
$e(x_i) := x_i + x_{i+1} \epsilon^p$ for $i < m$ and $e(x_m) := x_m + \eta$.  Since
$\eta$ and $\epsilon$ are nilpotent, we see that the composition of $e$ with the
reduction map $\cD(L) \to \cD(L)/{\mathcal N}(L)$ may be identified with the standard
algebra structure map.  Hence, each of the associated endomorphisms  is simply the identity and
we may therefore extend $e$ to $L$ to give $L$ a $\cD$-field structure.
For each natural number $n$, let $E_n:L \to \cD_n(L)$ be the map obtained from $e$ by iteration.   

We show now that for $n < m$ that $E_n(x_1) \in \cD_n^p(L)$ but
$E_m(x_1) \notin \cD_m^p(L)$. This fact will contradict~(\ref{bound}).

We need some notation.  For any natural number $n$ and 
set $\tau \subset \{1, \ldots, n \}$, we define 
$b_\tau := \bigotimes_{i=1}^n \epsilon^{\tau(i)} \in \cD^{(n)}(A)$ where here 
we have identified $\tau$ with its characteristic function.  Let us note
that if we fix $j \leq n$, then 
$\sum_{\tau \in \binom{n}{j}} b_\tau \in \cD_n(A)$.

Working inductively, we see that 
for $n < m$, we have 
$$
E_n(x_1) = \sum_{j=0}^n  x_{j+1} \sum_{\tau \in \binom{n}{j}} b_\tau^p
$$
which is clearly an element of $\cD_n^p(L)$, that is $E_n(x_1) \in \cD_n(L^{\alg})^p$.
Indeed, for the base case of the induction we have
$E_1(x_1) = x_1 + x_2 \epsilon^p = x_1 b_{\varnothing}^p + x_2 b_{ \{ 1 \}}^p$ and
for $n+1 < m$ we have the following computation. 
\begin{eqnarray*}
E_{n+1}(x_1) & = &  \sum_{j=0}^n x_{j+1} \sum_{\tau \in \binom{n}{j}} b_\tau^p \otimes 1 
+ \sum_{j=0}^n x_{j+2} \sum_{\tau \in \binom{n}{j}} b_\tau^p \otimes \epsilon^p \\
& = & \sum_{\ell=0}^{n+1} x_{\ell+1}  \sum_{\tau \in \binom{n+1}{\ell}} b_\tau^p
\end{eqnarray*}

On the other hand, evaluating $E_m$, 
we obtain 
\begin{eqnarray*}
E_m(x_1)  & = & \sum_{j=0}^{m-2} (x_{j+1}  \sum_{\tau \in \binom{m}{j}} b_\tau^p \otimes 1 + x_{j+2} \sum_{\tau \in \binom{m}{j}} b_\tau^p \otimes \epsilon^p) \\ &&
 + x_m (\epsilon^p \otimes \cdots \otimes \epsilon^p \otimes 1) + (\epsilon^p \otimes \cdots \otimes \epsilon^p \otimes \eta)
\end{eqnarray*}
In this expression, every term other than $\epsilon^p \otimes \cdots \epsilon^p \otimes \eta$ belongs to $\cD_m^p(L)$
while $\epsilon^p \otimes \cdots \otimes \epsilon^p \otimes \eta$ does not.  Hence, $E_m(x_1) \notin \cD_m^p(L)$. 
\end{proof}

\medskip
\subsection{Removing Assumptions~\ref{assumptionA}}
\label{subsect-assumptionA}
On the other hand, if we restrict to characteristic zero, then model companions  exist even in the absence of
Assumptions~\ref{assumptionA}.
As we do not yet see a pressing reason to develop the theory in full generality, we restrict ourselves here to a sketch of a proof.

We explain first why  Assumption~\ref{assumptionA}(ii) is unnecessary.
That is, we will describe the model companion still assuming that $A$ is a field of characteristic zero, but {\em without} assuming in the decomposition $\cD(A)=\prod_{i=0}^tB_i$ that the residue field of each $B_i$ is $A$.

For each $i$ fix an irreducible polynomial $P_i(x)$ of degree $d_i$ such that the residue field of $B_i$ is the finite extension $A[x]/(P_i)$.
Denote by $\cE_i$ the $\bbs$-algebra scheme such that $\cE_i(R)=R[x]/(P_i)$ for any $A$-algebra $R$.
In particular this fixes a basis for $\cE_i$.
Note that we may assume $\cE_0=\bbs$; indeed, one of the $B_i$ will still correspond to the kernel of $\pi$ and hence will have residue field $A$.

We have as before $\cD=\prod_{i=0}^t\cD_i$ and $\theta_i:\cD\to\cD_i$, but now $\rho_i:\cD_i\to\cE_i$ are the $\bbs$-algebra homomorphisms which when evaluated at $A$ are the residue maps on  $B_i$.
Note that when evaluated on another $A$-algebra $R$, even if $R$ is a field extension, $\cD_i(R)$ need no longer be a local ring and $\cE_i(R)$ may no longer be a field.
Nevertheless, $\rho_i^R:\cD_i(R)\to\cE_i(R)$ will be a surjective ring homomorphism, it is obtained from the residue map $\rho_i^A$ by base change to $R$.
As before we set $\pi_i:=\rho_i\circ\theta_i:\cD\to\cE_i$.

Suppose we are given a $\cD$-ring $(R,\partial)$.
For each $i=0,\dots, t$, instead of an associated endomorphism we now only have the {\em associated $A$-algebra homomorphisms} $\sigma_i:=\pi_i^R\circ e:R\to\cE_i(R)$, which with  respect to the basis for $\cE_i$ fixed above can be written as
$$\sigma_i(a)= \sum_{j=0}^{d_i-1}\alpha_{ij}(a)x^j$$
The $\alpha_{ij}:R\to R$ will be $A$-linear maps that are $0$-definable in $(R,\partial)$; indeed they are fixed $A$-linear combinations of the original operators $\partial$.
(We are working in the language $\cL_{\cD}$ of $A$-algebras equipped with the operators $\partial_1,\dots,\partial_{\ell-1}$, see page~\pageref{language}.)
Note that $d_0=1$, $\pi_0=\pi$, and $\sigma_0=\alpha_{0,0}=\id$.

Extending our earlier notation we now let $\cK$ be the class of $\cD$-rings $(R,\partial)$ such that $R$ is an integral $A$-algebra and for each $i=1,\dots,t$, $\sigma_i:R\to\cE_i(R)$  is {\em injective and has no zero divisors in its image}.
Notice that as $\cE_i(R)$ need not be an integral domain, this latter constraint on the $\sigma_i$ is not vacuous.
The class $\cK$ is universally axiomatisable; this follows from the fact that the $\alpha_{ij}$ are quantifier-free definable, as is the ring structure on $R^{d_i}$ induced by the ring structure on $\cE_i(R)$ via the basis $\{1,x,\cdots,x^{d_i-1}\}$.

More generally,
the class $\cM$ is now the class of triples $(R,S,\partial)$ where $R\subseteq S$ are integral $A$-algebras and $\partial=(\partial_1,\dots,\partial_{\ell-1})$ is a sequence of maps from $R$ to $S$ such that $e:R\to\cD(S)$ given by $e(r):=r\epsilon_0+\partial_1(r)\epsilon_1+\cdots+\partial_{\ell-1}(r)\epsilon_{\ell-1}$ has the following properties:
\begin{itemize}
\item[(i)]
$e$ is an $A$-algebra homomorphism,
\item[(ii)]
 for each $i=1,\dots, t$, $\sigma_i:=\pi_i^S\circ e:R\to \cE_i(S)$ is injective and has no zero divisors in its image.
\end{itemize}
Note that $\sigma_0:=\pi_0^S\circ e=\pi^S\circ e:R\to S$ is then the identity on $R$.

\begin{lemma}
\label{extendtoalg-gen}
\begin{itemize}
\item[(a)]If $(R,L,\partial)\in\cM$ with $L$ a field, then we can (uniquely) extend $\partial$ to the fraction field $F$ of $R$ so that $(F,L,\partial)\in\cM$.
\item[(b)]
Suppose $(F,L,\partial)\in\cM$ where $F$ and $L$ are fields and $L$ is algebraically closed.
If $\sigma_i:F\to \cE_i(L)$ are the embeddings associated to $\partial$, and $\sigma_i'$ are extensions of $\sigma_i$ to $F^{\alg}$, then there is an extension $\partial'$ of $\partial$ to $F^{\alg}$ with associated embeddings $\sigma_i'$.
\end{itemize}
\end{lemma}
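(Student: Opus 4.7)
The plan is to adapt the proofs of Lemmas~\ref{extendtofields} and~\ref{extendtoalg} to the weaker context where Assumption~\ref{assumptionA}(ii) is dropped. The crucial structural observations are that, since $B_i$ is a local Artinian $A$-algebra with maximal ideal $\mathfrak m_i$, the kernel of $\rho_i^L:\cD_i(L)\to\cE_i(L)$ equals $L\otimes_A\mathfrak m_i$ and is a nilpotent ideal of $\cD_i(L)$; and that when $L$ is a field, $\cE_i(L)=L\otimes_A k_i$ is a finite \'etale $L$-algebra (hence reduced) in which units and non-zero-divisors coincide.

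For part~(a), I follow the proof of Lemma~\ref{extendtofields} essentially verbatim. By the universal property of localisation, it suffices to show that each nonzero $r\in R$ is mapped by $e$ to a unit of $\cD(L)=\prod_i\cD_i(L)$. An element of $\cD_i(L)$ is a unit if and only if its image under $\rho_i^L$ is a unit in $\cE_i(L)$ (the kernel of $\rho_i^L$ being nilpotent and hence contained in the Jacobson radical), which in the Artinian ring $\cE_i(L)$ is equivalent to being a non-zero-divisor. For $i=0$ this is automatic since $\sigma_0=\mathrm{id}_R$ and $\cE_0(L)=L$; for $i\ge 1$ it is precisely the hypothesis that $\sigma_i$ has no zero divisors in its image. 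The resulting extension $e:F\to\cD(L)$ then lies in $\cM$ because each extended $\sigma_i:F\to\cE_i(L)$ is a ring homomorphism out of a field whose image is therefore isomorphic to $F$, hence a field, hence contains no zero divisors.

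For part~(b), I reduce by Zorn's lemma / transfinite iteration to constructing the extension one algebraic element at a time; fix $a\in F^{\alg}$ with minimal polynomial $P(x)\in F[x]$. For each $i$, $c_i:=\sigma_i'(a)\in\cE_i(L)$ is a root of $P^{\sigma_i}(x)\in\cE_i(L)[x]$, and I lift it to a root $b_i\in\cD_i(L)$ of $P^{e_i}(x)$ using a generalised Hensel's lemma: for a ring $R$ with nilpotent ideal $I$, a simple root of $\bar f\in (R/I)[x]$ lifts uniquely to a root of $f\in R[x]$. To apply this with $R=\cD_i(L)$, $I=\ker\rho_i^L$, $\bar f=P^{\sigma_i}$, and $\bar c=c_i$, I must verify that $c_i$ is a simple root. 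Since $L$ is algebraically closed, $\cE_i(L)$ decomposes as a product of copies of $L$, under which $\sigma_i$ becomes a tuple of field embeddings $F\to L$; the corresponding components of $c_i$ are then roots of the images of $P$ under those embeddings, and hence simple roots because $P$ is separable in characteristic zero. Setting $e'(a):=(b_0,\ldots,b_t)\in\cD(L)$ extends $e$ to an $A$-algebra homomorphism $F[a]=F(a)\to\cD(L)$ whose associated maps are precisely $\sigma_0',\ldots,\sigma_t'$, and membership of $(F(a),L,\partial')$ in $\cM$ is routine as in part~(a).

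The main technical point is the generalised Hensel's lemma used in part~(b), since in the absence of Assumption~\ref{assumptionA}(ii) the rings $\cD_i(L)$ are no longer local. This can be handled either by a direct Newton iteration argument for any ring with a nilpotent ideal, or by lifting the orthogonal idempotents of $\cE_i(L)\cong L^{d_i}$ through the nilpotent kernel to decompose $\cD_i(L)$ as a finite product of local Artinian $L$-algebras with residue field $L$ and then applying the classical Hensel factor-by-factor. Either way, no genuine new difficulty beyond bookkeeping arises, and the rest of the argument is a direct transcription of the proofs of Lemmas~\ref{extendtofields} and~\ref{extendtoalg}; in particular, the uniqueness of Hensel lifts yields, just as in the original Lemma~\ref{extendtoalg}, that the extension $\partial'$ is uniquely determined by the choice of~$\sigma'$.
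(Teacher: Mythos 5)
Your proof is correct and follows essentially the same route as the paper: part~(a) repeats the argument of Lemma~\ref{extendtofields}, using that an element of $\cD_i(L)$ is a unit exactly when its image under the surjective map $\rho_i^L$ with nilpotent kernel is a nonzero non-zero-divisor of the Artinian ring $\cE_i(L)$, and part~(b) repeats Lemma~\ref{extendtoalg} via a Hensel-type lifting along that nilpotent kernel. The only cosmetic difference is that the paper certifies the unit-derivative hypothesis directly -- $\frac{d}{dx}P(a)$ is a nonzero, hence invertible, element of $F^{\alg}$, so its image under the ring homomorphism $\sigma_i'$ is a unit of $\cE_i(L)$ -- rather than through the splitting $\cE_i(L)\cong L^{d_i}$ that you use.
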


\begin{proof}
Part~(a) is proved along the lines of Lemma~\ref{extendtofields}.
In order to extend $e$ to the fraction field $F$ we need to show that $e$ takes nonzero elements of $R$ to units in $\cD(L)$.
Equivalently, for each $i=1,\dots,t$, we need to show that $\theta_i\circ e$ takes nonzero elements of $R$ to units in $\cD_i(L)$.
Note that as $L$ is a field, $\cD_i(L)$ is a product of local $A$-algebras and $\cE_i(L)$ is the product of the residue fields of these local $A$-algebras.
The units of $\cD_i(L)$ are therefore precisely those elements whose images in $\cE_i(L)$ under $\rho_i$ are neither zero nor zero divisors.
What we therefore need to verify is that $\rho_i\circ\theta_i\circ e$ is injective and has no zero divisors in its image.
But $\rho_i\circ\theta_i\circ e=\sigma_i$ and the desired property is true since $(R,L,\partial)\in\cM$.
That the corresponding extension $(F,L,\partial)$ lands back inside $\cM$ is clear.

For part~(b) it suffices to prove, as in the proof of Lemma~\ref{extendtoalg}, that for any $a\in F^{\alg}$ we can
extend~$e$ to $F(a)$ in such a way that $\pi_i^L\circ e(a)=\sigma_i'(a)$ for each $i=0,\dots,t$.
Let $P(x)\in F[x]$ be the minimal polynomial of $a$ over $F$
and let $c_i:=\sigma_i'(a)\in\cE_i(L)$.
Note that as $P(a)=0$ but $\frac{d}{dx}P(a)\neq 0$ in $F^{\alg}$, and since $\sigma_i'$ is a ring homomorphism, we have that $P^{\sigma_i}(c_i)=0$ while $\frac{d}{dx}P^{\sigma_i}(c_i)$ is a unit in $\cE_i(L)$.
We wish to lift this root to $\cD_i(L)$.
While it is not the case that $\rho^L_i:\cD_i(L)\to\cE_i(L)$ is the residue map of a local algebra, it is still surjective with nilpotent kernel.
This is because the kernel of $\rho_i^L$ is obtained from the kernel of $\rho_i^A$ by tensoring with $L$ over $A$, and the kernel of the latter is the maximal ideal of $B_i$ which is nilpotent.
Hence by a Hensel's Lemma type argument we can lift $c_i$ to a root $b_i$ of $P^{e_i}(x)$ in $\cD_i(L)$.
Then $b=(b_0,\dots,b_t)\in\cD(L)$ is a root of $P^e(x)\in\cD(L)[x]$, and we can extend $e$ to $F(a)$ by sending $a$ to $b$.
By construction $\pi_i^Le(a)=\sigma_i'(a)$.
\end{proof}

Suppose now that $(K,\partial)$ is an algebraically closed $\cD$-field.
Recall that we wrote each $B_i=A[x]/(P_i)$.
 Let $b_{i1},\dots,b_{id_i}$ be the distinct roots of $P_i$ in $K$.
Then using these roots we can decompose $\cE_i(K)$ into a
 power of $K$ itself:
$$\cE_i(K)=K[x]/(P_i)=\prod_{k=1}^{d_i}K[x]/(x-b_{ik})=K^{d_i}$$
Composing the associated homomorphism $\sigma_i$ with the co-ordinate projections we get a $d_i$-tuple of {\em associated endomorphisms} of $K$, $(\sigma_{i1},\dots,\sigma_{id_i})$ where
$$\displaystyle \sigma_{ik}:= \sum_{j=0}^{d_i-1}b_{ik}^j\alpha_{ij}$$
In fact, under the identification $\cE_i(K)=K^{d_i}$ we have $\sigma_i=(\sigma_{i1},\dots,\sigma_{id_i})$, and so by the {\em associated difference field $(K,\sigma)$} we mean the the field $K$ equipped with all of these endomorphisms.
It then also makes sense to say that $(K,\partial)$ is {\em inversive} if each $\sigma_{ik}$ is an automorphism.
It is important to note, though, that the $\sigma_{ik}$, while still definable in $(K,\partial)$, are now not $0$-definable but $b_{ik}$-definable.
Note also that we have only defined the associated difference field of an {\em algebraically closed} $\cD$-field.

\begin{example}
\label{no4ii2}
Consider Example~\ref{no4ii} where
$$A=\mathbb Q\text{ and }\cD(A)=\mathbb Q \times \mathbb Q[x]/(x^2-2)$$
Then $t=1$ and the homomorphism associated to a $\cD$-ring $(R,\partial_1,\partial_2)$ is $\sigma_1:R\to R[x]/(x^2-2)$ where $\sigma_1(a)=\partial_1(a)+\partial_2(a)x$.
So $\alpha_{10}=\partial_1$ and $\alpha_{11}=\partial_2$.
If $(K,\partial_1,\partial_2)$ is an algebraically closed $\cD$-field then the associated endomorphisms are $\sigma_{10}=\partial_1+\sqrt{2}\partial_2$ and $\sigma_{11}=\partial_1-\sqrt{2}\partial_2$.
\end{example}

\begin{lemma}
\label{extendtoautos-gen}
Suppose $(F,L,\partial)\in\cM$ where $F$ and $L$ are fields.
Then there exists an algebraically closed extension $K$ of $L$ and an extension of $\partial$ to an inversive $\cD$-field structure on $K$.
\end{lemma}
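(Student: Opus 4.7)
The plan is to follow the strategy of the proof of Lemma~\ref{extendtoautos}, with one preparatory step to deal with the fact that the associated homomorphisms $\sigma_i:F\to\cE_i(L)$ no longer land in a field.  First I would extend~$L$ to an algebraic closure~$L^{\alg}$.  Over $L^{\alg}$ the polynomial $P_i$ splits into distinct linear factors, so $\cE_i(L^{\alg})=\prod_{k=1}^{d_i}L^{\alg}$; composing $\sigma_i$ with this decomposition produces a tuple $(\sigma_{i1},\dots,\sigma_{id_i})$ of ring homomorphisms $\sigma_{ik}:F\to L^{\alg}$, each of which is forced to be an injective field embedding by the definition of~$\cM$.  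Then I would fix an algebraically closed field $K\supseteq L^{\alg}$ of sufficiently large transcendence degree so that every $\sigma_{ik}$ can be extended to an automorphism $\sigma_{ik}'$ of~$K$; this is a standard field-theoretic construction.  Reassembling via the corresponding decomposition $\cE_i(K)=K^{d_i}$, these automorphisms determine $A$-algebra homomorphisms $\sigma_i':K\to\cE_i(K)$ extending the original $\sigma_i$.

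Next I would mimic the remainder of the proof of Lemma~\ref{extendtoautos}.  Fix a transcendence basis~$B$ of~$K$ over~$F$.  For each $b\in B$ and each $i=0,1,\dots,t$, use the surjectivity of $\rho_i^K:\cD_i(K)\to\cE_i(K)$ (established in the proof of Lemma~\ref{extendtoalg-gen}) to lift $\sigma_i'(b)\in\cE_i(K)$ to some $b_i\in\cD_i(K)$; for $i=0$ one may simply take the canonical lift coming from the $\bbs$-algebra structure map $s:\bbs\to\cD$.  Setting $e(b):=(b_0,\dots,b_t)\in\cD(K)=\prod_{i=0}^t\cD_i(K)$ and using the algebraic independence of~$B$ over~$F$, one extends $e$ to an $A$-algebra homomorphism $F[B]\to\cD(K)$ with $\pi^K\circ e=\id$ and $\pi_i^K\circ e=\sigma_i'\upharpoonright_{F[B]}$ for $i\geq 1$.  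Because each $\sigma_{ik}'$ is an automorphism of~$K$, the map $\sigma_i'\upharpoonright_{F[B]}:F[B]\to\cE_i(K)$ is injective with no zero divisors in its image, so $(F[B],K,\partial)\in\cM$.  Applying Lemma~\ref{extendtoalg-gen}(a) extends this to $(F(B),K,\partial)\in\cM$, and then Lemma~\ref{extendtoalg-gen}(b), applied with the chosen $\sigma_i'$ prescribed as the extensions, yields a $\cD$-field structure on $F(B)^{\alg}=K$ whose associated $A$-algebra homomorphisms are exactly the $\sigma_i'$.  By construction, the associated endomorphisms of $(K,\partial)$ are the automorphisms $\sigma_{ik}'$, so $(K,\partial)$ is inversive.

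The main new ingredient compared with Lemma~\ref{extendtoautos} is the conceptual recognition that ``automorphism'' now refers to the components $\sigma_{ik}'$ obtained after both passing to the algebraic closure and splitting each $P_i$; once this is set up, the roles of Lemmas~\ref{extendtofields} and~\ref{extendtoalg} in the original argument are played here by the two parts of Lemma~\ref{extendtoalg-gen}, and the remainder of the argument is essentially bookkeeping.
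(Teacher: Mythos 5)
Your proposal is correct and follows essentially the same route as the paper's proof: pass to $L^{\alg}$ so that each $\cE_i$ splits as a power of the field, extract the component embeddings $\sigma_{ik}$, extend them to automorphisms of a large algebraically closed $K$, define $e$ on $F[B]$ for a transcendence basis $B$ via lifts through the nilpotent-kernel surjections $\rho_i^K$, and finish with Lemma~\ref{extendtoalg-gen}(a) and~(b). The only difference is that you spell out details (the lifting step and the verification that $(F[B],K,\partial)\in\cM$) which the paper compresses into ``as in Lemma~\ref{extendtoautos}''.
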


\begin{proof}
Replacing $L$ with $L^{\alg}$ we may assume that $L$ is algebraically closed.
We can thus write $\sigma_i=(\sigma_{i1},\dots,\sigma_{id_i})$ where the embeddings $\sigma_{ik}:F\to L$ are obtained by composing $\sigma_i:F\to\cE_i(L)$ with the $k$th projection in the decomposition $\cE_i(L)=L^{d_i}$.
We can extend these $\sigma_{ik}$ to automorphisms $\sigma_{ik}'$ of some algebraically closed $K\supseteq L$.
So $\sigma_i':=(\sigma_{i1}',\dots,\sigma_{id_i}'):K\to\cE_i(K)$ extends $\sigma_i$.
Now, as in Lemma~\ref{extendtoautos}, we fix a transcendence basis $B$ for $K$ over $F$ and easily extend $\partial$ to $F[B]$ so that $(F[B],K,\partial)\in\cM$ and the associated homomorphisms are $\sigma_i'\upharpoonright F[B]$.
By Lemma~\ref{extendtoalg-gen}(a) we can extend $\partial$ to $F(B)$ preserving this property.
By Lemma~\ref{extendtoalg-gen}(b) we can extend $\partial$ further to a $\cD$-structure on $K=F(B)^{\alg}$ in such a way that the associated homomorphisms are $\sigma_1',\dots,\sigma_t'$, and hence the associated endomorphisms are the automorphisms $\sigma_{ik}'$.
\end{proof}

Suppose now that  $X$ is an irreducible affine variety over an algebraically closed $\cD$-field $K$.
For each $i=0,\dots,t$, we have the abstract prolongations with the induced morphisms as constructed in $\S4$ of~\cite{paperA}:
$$
\xymatrix{
\tau(X,\cD,e)\ar[rr]^{\widehat{\theta_i}} && \tau(X,\cD_i,e_i)\ar[rr]^{\widehat{\rho_i}} && \tau(X,\cE_i,\sigma_i)
}$$
But we also have, for each fixed $k=1,\dots,d_i$, the morphism
$$
\xymatrix{
\tau(X,\cE_i,\sigma_i)\ar[rr] && X^{\sigma_{ik}}
}$$
Indeed, the $k$th factor projection $\cE_i(K)=K^{d_i}\to K$ induces a map from
$X\big(\cE_i(K)\big)$, where $X$ is viewed as a scheme over $\cE_i(K)$ via base change coming from $\sigma_i:K\to\cE_i(K)$, to $X^{\sigma_{ik}}(K)$.\footnote{This morphism does not follow formally from the comparing of prolongations done in~\cite{paperA} because the identification $\cE_i(K)=K^{d_i}$ is over $B$, not $A$.
To fit into the formalism of~\cite{paperA} we would thus require $\sigma_i$ to be a $B_i$-algebra homomorphism, which it need not be as it may move the roots of~$P_i$.}
Composing, we have for each $i$ and $k$ the morphism
$$
\xymatrix{
\tau(X,\cD,e)\ar[rr]^{\widehat{\pi_{ik}}} && X^{\sigma_{ik}}
}$$
Moreover, if we set $F:=A(b_{ik})_{1\leq i\leq t, 1\leq k\leq d_i}$, and $X$ moves uniformly within an $F$-definable family of varieties, then so do the $\widehat{\pi_{ik}}:\tau(X,\cD,e)\to X^{\sigma_{ik}}$.

Now we can state the version of Theorem~\ref{ecedomains} without Assumption~\ref{assumptionA}(ii).

\begin{theorem}
\label{ecedomains-gen2}
Drop Assumptions~\ref{assumptionA}, and assume only that $A$ is a field of characteristic zero.
Then $(K,\partial)\in\cK$ is existentially closed if and only if
\begin{itemize}
\item[I.]
$K$ is an algebraically closed field.
\item[II.]
There exist distinct roots $b_{i,1},\dots,b_{i,d_i}$ of $P_i$ in $K$ such that $\displaystyle \sigma_{ik}:= \sum_{j=0}^{d_i-1}b_{ik}^j\alpha_{ij}$ is an automorphism of $K$,  for all $i=1,\dots,t$ and $k=1,\dots,d_i$.
\item[III.]
There exist distinct roots $b_{i,1},\dots,b_{i,d_i}$ of $P_i$ in $K$ such that if $X$ is an irreducible affine variety over $K$ and $Y\subseteq\tau(X,\cD,e)$ is an irreducible subvariety over $K$ such that $\widehat{\pi_{ik}}(Y)$ is Zariski dense in $X^{\sigma_{ik}}$ for all $i=0,\dots,t$ and $k=1,\dots,d_i$, then there exists $a\in X(K)$ with $\nabla(a)\in Y(K)$.
\end{itemize}
The theory of $\cD$-fields of characteristic zero thus admits a model companion.
\end{theorem}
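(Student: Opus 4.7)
The proof will follow exactly the structure of that of Theorem~\ref{ecedomains}, substituting the generalised lemmas (Lemma~\ref{extendtoalg-gen} and Lemma~\ref{extendtoautos-gen}) for their specialised counterparts, and working with the decomposition $\cE_i(K)=K^{d_i}$ that becomes available once $K$ contains the roots of the $P_i$. The main bookkeeping is to check that the conditions on the maps $\widehat{\pi_{ik}}$ in~II and~III correctly play the role that the $\widehat{\pi_i}$'s played previously.

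For the forward direction, assume $(K,\partial)\in\cK$ is existentially closed. Since $(K,K,\partial)\in\cM$, Lemma~\ref{extendtoalg-gen}(a) gives the fraction field of $K$ an $\cM$-extension, so existential closure forces $K$ to be a field; iterating Lemma~\ref{extendtoalg-gen}(b) (applicable because we are in characteristic zero, so each $P_i$ is separable and the Hensel-type lifting inside the proof of that lemma goes through) forces $K=K^{\alg}$, proving~I. Now that $K$ is algebraically closed, the roots $b_{i,k}$ of each $P_i$ lie in $K$ and the associated endomorphisms $\sigma_{ik}$ of~II make sense. Lemma~\ref{extendtoautos-gen} then gives an inversive extension, so existential closure forces inversiveness, which is~II. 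For~III, given $X$ and $Y$ with the stated density property, pick a $K$-generic $b\in Y(L)$ for some algebraically closed $L\supseteq K$ and let $a:=\widehat{\pi}(b)\in X(L)$ and $b'\in\cD(L)$ be the tuple corresponding to $b$ under the identification $\tau X(L)=X(\cD(L))$. Density of $\widehat{\pi_{ik}}(Y)$ in $X^{\sigma_{ik}}$ ensures that $\pi_{ik}^L(b')=\sigma_{ik}$-image of $a$ is $K$-generic in $X^{\sigma_{ik}}$; since $\cE_i(L)=L^{d_i}$ coordinate-wise (as $L$ is algebraically closed), this says precisely that $\pi_i^L\circ e:K[a]\to\cE_i(L)$ is injective and lands in elements with no zero divisors. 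Thus $(K[a],L,\partial)\in\cM$; extend to the fraction field by Lemma~\ref{extendtoalg-gen}(a), then to an inversive $\cD$-field by Lemma~\ref{extendtoautos-gen}, then to a model of $\ecdf$. Existential closure then yields the desired solution in $K$.

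For the converse, assume $(K,\partial)$ satisfies~I,~II,~III. One proceeds exactly as in the last half of the proof of Theorem~\ref{ecedomains}: reduce any conjunction of atomic $\cL_{\cD,K}$-formulae realised in an algebraically closed $\cD$-field extension $L$ to the condition that some $\nabla_r(b)$ lies in a Zariski-closed set $Z$ over $K$, set $c:=\nabla_{r-1}(c_0)$, $X:=\operatorname{loc}(c/K)$, $Y:=\operatorname{loc}(\nabla c/K)$. The only step requiring verification is that $\widehat{\pi_{ik}}(Y)$ is Zariski-dense in $X^{\sigma_{ik}}$; but $\widehat{\pi_{ik}}(\nabla c)=\sigma_{ik}(c)$, and since each $\sigma_{ik}$ restricts to an automorphism of $K$ by~II, the image $\sigma_{ik}(c)$ is $K$-generic in $X^{\sigma_{ik}}$, giving the desired density. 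Axiom~III then supplies $a\in X(K)$ with $\nabla(a)\in Y(K)$, and the induction-on-word-length argument of the original proof shows that the first $m$ coordinates of $a$ realise the given formula.

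Finally, to get the ``model companion'' conclusion we must observe that I--III are first order. Condition~I is obviously elementary. Conditions~II and~III begin with ``\emph{there exist distinct roots $b_{i,k}$ of $P_i$ in $K$}'', so they are existential statements about those parameters combined with conditions that are elementary in those parameters: for~II this is direct from the formulas $\displaystyle\sigma_{ik}:=\sum_j b_{ik}^j \alpha_{ij}$ defining the $\sigma_{ik}$ and quantifying surjectivity; for~III, over the field $F=A(b_{i,k})$ the families of prolongations $\tau(X,\cD,e)$, the twists $X^{\sigma_{ik}}$, and the morphisms $\widehat{\pi_{ik}}$ all vary uniformly in algebraic families (by Proposition~4.7(b) and~4.8(b) of~\cite{paperA} applied now over $F$), and Zariski-density and irreducibility are parametrically definable in $ACF$. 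The main subtlety, which is the one point to be attentive to, is precisely this uniformity of $\widehat{\pi_{ik}}$ in the parameters $b_{i,k}$, but this follows from the construction of $\widehat{\pi_i}$ in $\S 4.1$ of~\cite{paperA} together with the fact that projection $\cE_i(K)\to K$ onto a coordinate is algebraic in $b_{i,k}$. Hence the theory of $\cD$-fields of characteristic zero admits the model companion axiomatised by (characteristic zero, $\cD$-field, and) I,~II,~III.
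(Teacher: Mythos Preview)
Your proposal is correct and follows essentially the same route as the paper, which simply says the result ``is proved just as Theorem~\ref{ecedomains} was, using Lemmas~\ref{extendtoalg-gen}(a), \ref{extendtoalg-gen}(b), and~\ref{extendtoautos-gen} in place of~\ref{extendtofields}, \ref{extendtoalg}, and~\ref{extendtoautos}'' and omits details. One small slip: in the forward direction for~III you write ``then to a model of $\ecdf$'', which is circular since $\ecdf$ is what you are in the process of axiomatising; but this step is also unnecessary, since Lemma~\ref{extendtoautos-gen} already yields an inversive $\cD$-field extension (hence a member of $\cK$) with $\nabla(a)=b$, and existential closure in $\cK$ then finishes the argument directly.
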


Theorem~\ref{ecedomains-gen2} is proved just as Theorem~\ref{ecedomains} was, using Lemmas~\ref{extendtoalg-gen}(a), \ref{extendtoalg-gen}(b), and~\ref{extendtoautos-gen} in place of~\ref{extendtofields}, \ref{extendtoalg}, and~\ref{extendtoautos}.
That the given axioms are first-order also follows as before.
We omit the details.

On the face of it, Assumption~\ref{assumptionA}(i) is more serious than Assumption~\ref{assumptionA}(ii), but we may reduce
to the case where it holds.  Indeed, if $(K,\partial)$ is a $\cD$-field for which $A$ is not necessarily a field, then
by regarding $\cD$ as a ring scheme over the field of fractions of the image of $A$ in $K$, we may see $(K,\partial)$
as a $\cD$-field in which Assumption~\ref{assumptionA}(i) holds.  That is, if we consider each possible way in which
$\cD(K)$ may split as a product of local rings via maps defined by linear equations defined over the algebraic
closure of the field of fractions of the image of $A$, then we see that in every $\cD$-field one of these splittings must hold.
We obtain an axiomatisation in the absence of Assumptions~\ref{assumptionA} by taking each such possible form of the linear
maps used for a splitting as an antecedent and then relativising the axiomatisation of Theorem~\ref{ecedomains-gen2}.

%\bibliographystyle{plain}
%\bibliography{principal}

\begin{thebibliography}{10}

\bibitem{bustamante}
T.~Bustamante.
\newblock {\em Th\'eorie de mod\`eles des corps diff\'erentiellement clos avec
  un automorphisme g\'enerique}.
\newblock PhD thesis, Universit\'e Paris 7, 2005.

\bibitem{campana}
F.~Campana.
\newblock Alg\'ebricit\'e et compacit\'e dans l'espace des cycles d'un espace
  analytique complexe.
\newblock {\em Mathematische Annalen}, 251(1):7--18, 1980.

\bibitem{acfa1}
Z.~Chatzidakis and E.~Hrushovski.
\newblock Model theory of difference fields.
\newblock {\em Transactions of the American Mathematical Society},
  351(8):2997--3071, 1999.

\bibitem{acfa2}
Z.~Chatzidakis, E.~Hrushovski, and Y.~Peterzil.
\newblock Model theory of difference fields {II}. {P}eriodic ideals and the
  trichotomy in all characteristics.
\newblock {\em Proceedings of the London Mathematical Society. Third Series},
  85(2):257--311, 2002.

\bibitem{fujiki}
A.~Fujiki.
\newblock On the {D}ouady space of a compact complex space in the category
  {$\mathcal{C}$}.
\newblock {\em Nagoya Mathematical Journal}, 85:189--211, 1982.

\bibitem{hartkimpillay}
B.~Hart, B.~Kim, and A.~Pillay.
\newblock Coordinatisation and canonical bases in simple theories.
\newblock {\em The Journal of Symbolic Logic}, 65(1):293--309, 2000.

\bibitem{HrMM}
E.~Hrushovski.
\newblock The {M}anin-{M}umford conjecture and the model theory of difference
  fields.
\newblock {\em Annals of Pure and Applied Logic}, 112(1):43--115, 2001.

\bibitem{hrushovskisokolovic}
E.~Hrushovski and Z.~Sokolovi\'c.
\newblock Minimal types in differentially closed fields.
\newblock Preprint, 1992.

\bibitem{kamensky}
M.~Kamensky.
\newblock Tannakian formalism over fields with operators.
\newblock Preprint, 2012.

\bibitem{matsumura}
H.~Matsumura.
\newblock {\em Commutative ring theory}.
\newblock Cambrdige University Press, 1986.

\bibitem{MPS}
R.~Moosa, A.~Pillay, and T.~Scanlon.
\newblock Differential arcs and regular types in differential fields.
\newblock {\em J. Reine Angew. Math.}, pages 35--54, 2008.

\bibitem{paperA}
R.~Moosa and T.~Scanlon.
\newblock Jet and prolongation spaces.
\newblock {\em Journal de l'Institut de Math\'ematiques de Jussieu},
  9(2):391--430, 2010.

\bibitem{paperB}
R.~Moosa and T.~Scanlon.
\newblock Generalised {H}asse-{S}chmidt varieties and their jet spaces.
\newblock {\em Proceedings of the London Mathematical Society}, 2011.
\newblock doi: 10.1112/plms/pdq055.

\bibitem{piercepillay}
D.~Pierce and A.~Pillay.
\newblock A note on the axioms for differentially closed fields of
  characteristic zero.
\newblock {\em Journal of Algebra}, 204(1):108--115, 1998.

\bibitem{pillayziegler03}
A.~Pillay and M.~Ziegler.
\newblock Jet spaces of varieties over differential and difference fields.
\newblock {\em Selecta Math. (N.S.)}, 9(4):579--599, 2003.

\bibitem{scanlon2000}
T.~Scanlon.
\newblock A model complete theory of valued ${D}$-fields.
\newblock {\em Journal of Symbolic Logic}, 65(4):1758--1784, 2000.

\bibitem{ziegler03}
M.~Ziegler.
\newblock Separably closed fields with {H}asse derivations.
\newblock {\em Journal of Symbolic Logic}, 68(1):311--318, 2003.

\end{thebibliography}

\end{document}